\documentclass[11pt]{article}

\usepackage[margin=1in]{geometry}
\usepackage{amsmath,amssymb,amsthm,mathtools}
\usepackage{xcolor}
\usepackage{graphicx,tikz}
\usepackage[colorlinks=true,linkcolor=blue,citecolor=red,urlcolor=blue]{hyperref}

\newtheorem{theorem}{Theorem}
\newtheorem{proposition}{Proposition}
\newtheorem{lemma}{Lemma}

\newcommand{\Bern}{\operatorname{Bern}}

\newcommand{\E}{\mathbb E}
\newcommand{\Pbb}{\mathbb P}
\newcommand{\Expect}{\mathbb{E}}
\newcommand{\expect}[1]{\mathbb{E}\left[#1\right]}
\newcommand{\Prob}{\mathbb{P}}

\newcommand{\iprod}[2]{\left \langle #1, #2 \right\rangle}
\newcommand{\Iprod}[2]{\langle #1, #2 \rangle}
\newcommand{\indc}[1]{{\mathbf{1}\left\{{#1}\right\}}}

\newcommand{\KL}{\operatorname{KL}}

\newcommand{\Post}{\operatorname{Post}}
\newcommand{\one}{\mathbf 1}
\newcommand{\cL}{\mathcal L}

\newcommand{\cR}{\mathcal R}
\newcommand{\ip}[2]{\langle #1,#2\rangle}

\newcommand{\bsell}{\boldsymbol{\ell}}
\newcommand{\bfa}{\mathbf{a}}

\newcommand{\sfA}{\mathsf{A}}

\newcommand{\sfT}{\mathsf{T}}

\newcommand{\bY}{\mathbf{Y}}
\newcommand{\bZ}{\mathbf{Z}}
\newcommand{\bG}{\mathbf{G}}

\newcommand{\calA}{\mathcal{A}}

\newcommand{\calH}{\mathcal{H}}

\newcommand{\calK}{\mathcal{K}}

\newcommand{\naturals}{\mathbb{N}}

\newcommand{\stepa}[1]{\overset{\rm (a)}{#1}}
\newcommand{\stepb}[1]{\overset{\rm (b)}{#1}}
\newcommand{\stepc}[1]{\overset{\rm (c)}{#1}}
\newcommand{\stepd}[1]{\overset{\rm (d)}{#1}}

\newcommand{\Law}{\cL}

\usepackage{prettyref}
\newrefformat{eq}{(\ref{#1})}
\newrefformat{e}{(\ref{#1})}
\newrefformat{thm}{Theorem~\ref{#1}}
\newrefformat{th}{Theorem~\ref{#1}}
\newrefformat{chap}{Chapter~\ref{#1}}
\newrefformat{sec}{Section~\ref{#1}}
\newrefformat{seca}{Section~\ref{#1}}
\newrefformat{algo}{Algorithm~\ref{#1}}
\newrefformat{fig}{Fig.~\ref{#1}}
\newrefformat{tab}{Table~\ref{#1}}
\newrefformat{rmk}{Remark~\ref{#1}}
\newrefformat{clm}{Claim~\ref{#1}}
\newrefformat{def}{Definition~\ref{#1}}
\newrefformat{cor}{Corollary~\ref{#1}}
\newrefformat{lmm}{Lemma~\ref{#1}}
\newrefformat{lem}{Lemma~\ref{#1}}
\newrefformat{prop}{Proposition~\ref{#1}}
\newrefformat{pr}{Proposition~\ref{#1}}
\newrefformat{app}{Appendix~\ref{#1}}
\newrefformat{apx}{Appendix~\ref{#1}}
\newrefformat{ex}{Example~\ref{#1}}
\newrefformat{exer}{Exercise~\ref{#1}}
\newrefformat{soln}{Solution~\ref{#1}}

\newcommand{\ER}{Erd\"{o}s-R\'enyi}

\title{Sharp detection thresholds for random geometric graphs \\ with polynomial average degrees}
\author{Hang Du, Cheng Mao, Nike Sun, Yihong Wu, and Jiaming Xu\thanks{
H.\ Du and N.\ Sun are with the Department of Mathematics, Massachusetts Institute of Technology, Cambridge, Massachusetts, USA, \texttt{\{hangdu,nsun\}@mit.edu}.
C.\ Mao is with the School of Mathematics, Georgia Institute of Technology, Atlanta, Georgia, USA, \texttt{cheng.mao@math.gatech.edu}.
Y.\ Wu is with the Department of Statistics and Data Science, Yale University, New Haven, Connecticut, USA, \texttt{yihong.wu@yale.edu}.
J.\ Xu is with The Fuqua School of Business, Duke University, Durham, North Carolina, USA, \texttt{jx77@duke.edu}.
}}
\date{}

\begin{document}
\maketitle

\begin{abstract}
We study the random geometric graph formed by connecting $n$ independent uniform points on $\mathbb S^{d-1}$ whenever their inner product exceeds a threshold chosen to give edge density $p$. We establish the conjectured dimensionality threshold for indistinguishability from the Erd\"{o}s-R\'enyi graph $G(n,p)$ throughout the sparse regime with polynomially growing average degree. Specifically, for every fixed $\delta>0$, if $p\to0$, $p\ge n^{-1+\delta}$, and $d\gg (np\log(1/p))^3$, then the Kullback-Leibler (KL) divergence between the two distributions vanishes.

Our proof builds on the posterior analysis framework introduced in previous work of the authors, combining a truncated second moment analysis with Fourier and spherical harmonic expansions. As in prior works, we reduce the KL bound to controlling a truncated $\chi^2$-divergence $\Delta$ for the neighborhood of the last vertex conditional on the graph induced by the preceding vertices. 
We then bound $\Delta$ using a Fourier expansion of the conditional likelihood ratio for this neighborhood, where the high-frequency contribution is made negligible by truncation to a typical-degree event, while a spherical harmonic expansion approximates the low-frequency contribution in terms of posterior moments of bounded-degree harmonic polynomials. The key observation is that these moments and $\Delta$ satisfy a coupled system of cavity recurrences, yielding a self-bounding inequality that implies the desired KL bound.

%

\end{abstract}

\tableofcontents

\section{Introduction}

High-dimensional random geometric graphs (RGGs) have attracted substantial
attention in recent years across probability, statistics, and
theoretical computer science. An extensive and rapidly growing literature has
investigated the classical hard-threshold
model~\cite{devroye2011high,bubeck2016testing,brennan2020phase,liu2022testing,bangachev2024fourier,bangachev2025sandwiching},
soft and noisy models with smooth connection
kernels~\cite{liu2023phase,liu2023probabilistic,mao2026random}, bipartite models~\cite{brennan2021finetti,gobel2026information}, and variants based
on different latent geometries or algebraic
structures~\cite{bangachev2024detection,bangachev2025random}; see also the
survey~\cite{duchemin2023random}. Despite significant activity and substantial
progress, a simple and fundamental question has remained largely unresolved:
at what dimension does a high-dimensional RGG become statistically
indistinguishable from an Erd\H{o}s--R\'enyi random graph?

To formulate the problem, let $x_1,\ldots,x_n$ be independent and uniform on
the unit sphere $\mathbb S^{d-1}\subset\mathbb R^d$.  The hard-threshold
spherical RGG connects distinct vertices $i,j$ when
$\ip{x_i}{x_j}\ge\tau$, where $\tau=\tau(d,p)$ is chosen so that
\[
 \Pbb\{\ip{x_1}{x_2}\ge\tau\}=p.
\]
We write $\Pbb_{\mathrm{RGG}}$ for the law of this graph and
$\Pbb_{\mathrm{ER}}$ for the law of the Erd\H{o}s--R\'enyi graph $G(n,p)$.
The two models have identical single-edge marginals; the testing problem asks
whether the dependence created by the latent geometry can nevertheless be
detected from the graph alone.

This  hypothesis testing problem was introduced by Bubeck, Ding,
Eldan, and R\'acz~\cite{bubeck2016testing}. For constant $p$,
they established the sharp transition $d\asymp n^3$: the signed triangle
statistic detects the geometry when $d\ll n^3$, whereas the two models are
indistinguishable when $d\gg n^3$. Liu, Mohanty,
Schramm, and Yang~\cite{liu2022testing} subsequently showed that, when
$p=o(1)$, signed triangles achieve detection whenever
$
 d\ll (np\log(1/p))^3
$
and further conjectured that the two
models should be indistinguishable when
$d\gg (np\log(1/p))^3$.
Despite significant progress over many
years~\cite{brennan2020phase,liu2022testing}, the best-known general
impossibility condition was $d\gg n^3p^2\operatorname{polylog}(n)$, which differs from the
conjectured threshold by a polynomial factor throughout most of the sparse
regime. The closest result to the conjectured scaling was obtained in the
constant-average-degree regime, where indistinguishability was proved under
$d\gg(\log n)^{36}$~\cite{liu2022testing}, leaving only a polylogarithmic gap.

Progress toward the conjecture accelerated in 2026. The recent
advance~\cite{du2026resolution} shows that detection is impossible when
$d\gg (np\log(1/p))^3$ and $d\ge(1+\epsilon)n$ for a fixed $\epsilon>0$,
thereby settling the conjecture whenever $p\gtrsim n^{-2/3}/\log n$.
Independently, Hunter,
Milojevi\'c, and Sudakov~\cite{hunter2026distinguishability} obtained the
conjectured threshold in the range
$n^{-1/5}(\log n)^C\le p\le 1/3$, for a sufficiently large constant $C$, by a
different pointwise likelihood-comparison method.

The present paper resolves the conjecture throughout the sparse regime with
polynomially growing average degree.
The main result shows that when the dimension is above the conjectured threshold, 
the RGG and the \ER~graph are statistically indistinguishable with vanishing
Kullback-Leibler (KL) divergence.

\begin{theorem}
\label{thm:main-detection}
Fix any constant $\delta>0$.  Suppose that
\begin{equation}
 p\to0,\qquad p\ge n^{-1+\delta},\qquad
 d\gg (np\log(1/p))^3.
 \label{eq:regime}
\end{equation}
Then
\[
 \KL(\Pbb_{\mathrm{RGG}}\Vert\Pbb_{\mathrm{ER}})=o(1).
\]
\end{theorem}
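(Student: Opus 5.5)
The plan is to follow the posterior-analysis framework sketched in the abstract. First, apply the chain rule for KL divergence along the vertex-exposure martingale. Let $G_m$ be the graph induced on the first $m$ vertices. Then
\[
\KL(\Pbb_{\mathrm{RGG}}\Vert\Pbb_{\mathrm{ER}})=\sum_{m=1}^{n-1}\E\,\KL\bigl(\Law(N_{m+1}\mid G_m)\,\Vert\,\Bern(p)^{\otimes m}\bigr),
\]
where $N_{m+1}\in\{0,1\}^m$ is the neighborhood of vertex $m+1$ among the first $m$ vertices. Each term is bounded by a $\chi^2$-type quantity. To avoid heavy tails, we first restrict to a typical event $\mathcal E_m$ on which degrees and codegrees of $G_m$ are close to their means, say $|\deg-mp|\lesssim\sqrt{mp\log n}$. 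This event has probability $1-n^{-\omega(1)}$ under both models when $p\ge n^{-1+\delta}$. We then pass to a truncated $\chi^2$-divergence $\Delta_m$, defined using the conditional likelihood ratio of $N_{m+1}$ given $G_m$, with $N_{m+1}$ restricted to typical-degree configurations. A standard argument (KL $\le\log(1+\chi^2)$ on the good event, plus a crude bound off it) reduces the theorem to showing $\sum_m \E\Delta_m=o(1)$. It therefore suffices to prove $\E\Delta_m=o(1/n)$.

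Second, we expand the conditional likelihood ratio $L(N)=\E[\prod_{i\le m}(\text{edge indicators})\mid G_m]/p^{|N|}(1-p)^{m-|N|}$ in the $p$-biased Fourier basis $\chi_S(N)=\prod_{i\in S}(N_i-p)/\sqrt{p(1-p)}$. Each coefficient equals a posterior expectation of $\prod_{i\in S}(\indc{\ip{x_i}{x_{m+1}}\ge\tau}-p)$, averaged over $x_{m+1}$. We split frequencies at $|S|\le k$ with $k$ a large constant. High frequencies are controlled by the truncation: on typical degrees, each coefficient is suppressed by a factor like $(\text{correlation})^{|S|}$, and we sum using the degree constraint. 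For low frequencies, we expand $x\mapsto \indc{\ip{x}{y}\ge\tau}-p$ in spherical harmonics. Integrating out $x_{m+1}$ turns each coefficient into posterior moments $\E[\prod_{i\in S}P_{\ell_i}(x_i)\mid G_m]$ of bounded-degree harmonic polynomials, with Gegenbauer weights. The dominant term is degree $1$, whose weight scales like $p\log(1/p)/\sqrt d$ per edge; this is exactly where the scale $(np\log(1/p))^3\ll d$ appears, through triangle-like cycles.

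Third, the key step is to show that these posterior moments obey cavity recurrences. Adding vertex $m+1$ updates the posterior of $(x_1,\dots,x_m)$ by the likelihood ratio $L$. Hence the posterior moments of harmonic polynomials at time $m+1$ equal those at time $m$ plus correction terms, which are controlled by $\Delta_m$ times prior moment sizes via Cauchy--Schwarz. Conversely, $\Delta_m$ is bounded by a quadratic form in these moments. Summing over $m$ gives a self-bounding inequality of the form $\sum_m\E\Delta_m\le \epsilon_0+C\eta\sum_m\E\Delta_m$, where $\epsilon_0=O(n^3p^3\log^3(1/p)/d)=o(1)$ is the Erdős--Rényi-like baseline and $\eta=o(1)$. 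From this, $\sum_m\E\Delta_m=o(1)$ follows.

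I expect the main obstacle to be this last step. One must bound the posterior moments of degree-$\ell$ harmonics uniformly in $\ell\le k$, with the correct dependence on $p$: each factor of $\log(1/p)$ must appear exactly to the third power. In addition, the error from truncating at typical degrees and at spherical degree $k$ must be shown to be $o(1/n)$ per step for all $p$ down to $n^{-1+\delta}$. The constraint $p\ge n^{-1+\delta}$ is what makes a constant $k=k(\delta)$ suffice, since each extra harmonic degree gains a factor of $n^{-c\delta}$. My first attempt would be an induction on $m$, carrying the moment bounds as the inductive hypothesis.
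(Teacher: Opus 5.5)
Your first two steps follow the paper in spirit: it truncates only the new row to typical degree, bounds every chain-rule term by the last one via monotonicity, reduces to $\Delta\le 2\sum_{k<K}\binom{n-1}{k}\Post(\eta_k)+o(1/n)$, and then expands $\eta_k$ in harmonics. The gap is in your third step, which is where all the difficulty lies.

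\textbf{Why the time recursion fails.} Write $\Lambda(N)$ for your likelihood ratio $L(N)$, to avoid a clash with $L=\log(1/p)$. Adding vertex $m+1$ does not reweight the posterior of $x_{1:m}$ by $\Lambda(N)$. It reweights it by $\Lambda_x(N)/\Lambda(N)$, where $\Lambda_x(y)=\sum_S\eta_S(x_S)\psi_S(y)$ is the row likelihood given the latent positions. Hence
\[
\E[D\mid G_{m+1}]-\E[D\mid G_m]=\frac{\operatorname{Cov}\bigl(D,\Lambda_x(N_{m+1})\mid G_m\bigr)}{\Lambda(N_{m+1})}.
\]
$\Delta_m$ controls only the denominator. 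Cauchy--Schwarz on the covariance brings in $\sum_{S\neq\varnothing}\operatorname{Var}(\eta_S\mid G_m)$, which measures how informative the row is about the latent positions; it is not $\Delta_m$. Its $|S|=2$ part alone is of order $m^2p^2L^2/d$. Summed over $m$, this gives increments of order $\E(D^2)\,n^3p^2L^2/d$, far above the target $\E(D^2)pL^{\nu(D)}$.

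Even granting your claim that each step costs only $\E(D^2)\Delta_m$, accumulating over time gives $\Post(\ip{x_1}{x_2})\lesssim (pL+\sum_m\Delta_m)/d$. Feeding this into $\Delta_m\gtrsim m^2p^2L^2\,\Post(\ip{x_1}{x_2})$ yields your inequality with $\eta\asymp n^3p^2L^2/d=\varepsilon_n/(pL)$. This is not $o(1)$ under the hypotheses, since only $\varepsilon_n=o(1)$ and $p\to0$ are assumed. So your self-bounding inequality does not follow, and an induction on $m$ loses a factor of $n$ by design.

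\textbf{What the paper does instead.} The missing idea is a recursion that keeps the structure of the numerator and involves only a single-row $\Delta$. The paper works at the final time with a leave-one-out cavity. For $D\in\calH_{\bsell}$ it marks $u\in V$, drops the $O(1)$ internal edges, and writes $\E[D\mid\bG_{-u},\bY_I]=r_{u,I,D}/\Lambda_{u,I}$.
\begin{itemize}
\item By exchangeability, $u$'s row behaves like the last row. So the single quantity $\Delta$, expected to be $o(1/n)$, enters only through the rare event $\{\Lambda_{u,I}<1/2\}$. By hypercontractivity this costs $\E(D^2)\phi_{\nu(D)}(\Delta)$.
\item The numerator's Fourier coefficients are $\E[\eta_{D,S}\mid\bG_{-u}]$. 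Expanding $\eta_{D,S}$ in harmonics gives $\gamma_{h;\bfa}\sfT^{u,S}_{\bfa}D$. These are new tensor-product harmonics on $(V\setminus u)\cup S$, restricted by the selection rule and satisfying $\E[(\sfT^{u,S}_{\bfa}D)^2]\le Cd^{-s}\E(D^2)$.
\item The resulting recurrence relates $\Post(D)$ to posterior moments of other multidegrees, with total normalized weight $O(L^M/(np)^2)$.
\item It is closed by a normalized supremum over the infinite bounded-degree class, using weight $(np)^{(t(D)-M)_+}$ and the trivial bound $\Post\le\E(D^2)$ once $t>2M$.
\end{itemize}
This yields $\Post(D)\le Cg(D)(1+\Delta/p)$, and hence $\Delta\le o(1/n)+o(1/(np))\Delta$.

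Note also that the objects to track are not $\E[\prod_iP_{\ell_i}(x_i)\mid G_m]$; zonal polynomials take a scalar argument. They are multivariate harmonics such as $\int\prod_jP_{a_j}(\ip{x_j}{z})\,d\sigma(z)$, and the recursion keeps generating new ones on growing vertex sets.
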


\paragraph{Proof strategy.}
Our proof builds on the posterior-analysis framework introduced for soft
RGGs in~\cite{mao2026random}. A central insight of that work is that proving a
sharp detection threshold requires understanding the posterior distribution
of the latent positions given the observed graph. In particular, 
one needs to show that,  to
estimate $\langle x_1,x_2\rangle$, the information contained in
the entire graph beyond the single edge between vertices $1$ and $2$ is negligible. 
This perspective was subsequently used in~\cite{du2026resolution}
to establish the conjectured threshold for hard RGGs in the regime
$d\ge(1+\epsilon)n$. The
posterior bounds there rely on an information-theoretic comparison argument
and strong log-concavity, an approach that fundamentally requires $d>n$.

The main innovation of the present work is a different approach to posterior
analysis that remains effective when $d<n$. We leverage spherical-harmonic
expansions to derive a cavity recurrence that relates the posterior moment of
one harmonic polynomial to those of other harmonic polynomials with controlled
multidegrees.
Harmonic selection rules restrict the multidegrees that can appear in the
recurrence, while a degree-dependent normalization makes the recurrence
contractive, yielding the desired posterior bounds. 

To translate these posterior bounds into indistinguishability,
following~\cite{liu2022testing,mao2026random,du2026resolution}, we reveal the graph one vertex at a time and
use the KL chain rule to reduce the problem to controlling the conditional KL of neighborhoods
at each step by Fourier expansion, with Fourier coefficients given by certain posterior moments. As observed in \cite[Section~4]{du2026resolution}, the na\"ive bound
using unconditional moments fails when $d<n$. We overcome this
obstruction by restricting to the event that the newly revealed vertex has
typical degree. This truncation  makes the high-frequency Fourier remainder  negligible,
while the remaining low-frequency  terms are controlled by
the posterior bounds obtained from the cavity recurrence. Combining these
ingredients gives the desired vanishing KL divergence.

\paragraph{Organization.}
Section~\ref{sec:pf-main-thm} reduces the theorem to posterior bounds for 
spherical harmonics of bounded total degrees and shows how these bounds complete the proof of~\prettyref{thm:main-detection}.  Sections
\ref{sec:cavity-recurrence} and~\ref{sec:closing-recursion} derive and close the
recurrence for these posterior moments. 
Sections~\ref{sec:high-level-fourier} and~\ref{sec:high-harmonic-remainder} prove, respectively, the high-frequency
Fourier remainder and high-degree harmonic remainder estimates used earlier.

\paragraph{Notation.}
For a positive integer $m$, write $[m]=\{1,\ldots,m\}$.  Let
$G_m$ denote the random geometric graph induced by the first $m$ latent
points; in particular, $G_n$ is the observed graph.  Write
$W_{ij}=\langle x_i,x_j\rangle$.  Throughout the proof, we abbreviate
\begin{equation}\label{e:def.L.eps}
 L \equiv \log(1/p),\qquad
 \varepsilon_n
 \equiv \frac{(npL)^3}{d}=o(1).
\end{equation}
Constants may depend on the fixed parameter $\delta$ and on fixed truncation
cutoffs, but not on $n,p$, or $d$.

Define the centered standardized cap kernel
\begin{equation}\label{e:def.kappa}
 \kappa(u)=\frac{\one\{u\ge\tau\}-p}{\sqrt{p(1-p)}}
\end{equation}
and the local interaction function
\begin{equation}
 \eta_k(x_1,\ldots,x_k)
 =\E_z\left[\prod_{i=1}^k\kappa(\ip{z}{x_i})\right],
 \label{eq:eta}
\end{equation}
where $z$ is independent and uniform on $\mathbb S^{d-1}$.
For any square-integrable function $D$ of latent
points, define the second moment of its posterior mean by
\begin{equation}
 \Post(D)
 :=\E\left[\left(\E[D\mid G_n]\right)^2\right].
 \label{eq:post}
\end{equation}

\paragraph{Declaration of AI use.}
The initial proof was discovered by GPT-5.6 under the authors' guidance and was substantially simplified and reorganized by the authors with AI assistance.
A detailed account is
provided in the \hyperlink{sec:AI_statement}{AI methodologies statement} at the end of the paper.
The authors verified the correctness and originality of all content, including references.

\section{Proof of Theorem~\ref{thm:main-detection}} \label{sec:pf-main-thm}

The proof proceeds in five main steps, which we outline below.
\begin{enumerate}
 \item Following~\cite{brennan2020phase,liu2022testing,du2026resolution}, we reveal the graph one
 vertex at a time. The KL chain rule and the data processing inequality reduce the problem to analyzing
 the truncated $\chi^2$-divergence $\Delta$ for the neighborhood of the $n$th vertex conditional on $G_{n-1}$: 
 \begin{equation}
 \KL(\Pbb_{\mathrm{RGG}}\Vert\Pbb_{\mathrm{ER}})
  \le (n-1) \Delta + o(1),
 \label{eq:outline-KL}
 \end{equation}
where the $o(1)$ error is incurred by the truncation to a typical-degree event. 

 \item 
We compute the Fourier expansion of the conditional likelihood ratio for the neighborhood of the $n$th vertex, where the coefficients are posterior means of the local interaction functions
 $\eta_k$ defined in \eqref{eq:eta}, leading to
 \begin{align}
  \Delta\le 2\sum_{k=2}^{K-1}\binom{n-1}{k}\Post(\eta_k)+o(1/n), \label{eq:Delta_expansion}
 \end{align} 
for sufficiently large fixed
 $K=K(\delta)$, 
 where $\Post(\eta_k)$ is defined in \eqref{eq:post}.
 The degree truncation is essential for showing that the high
 Fourier frequencies $k\ge K$ contribute only $o(1/n)$. 

 \item 
For each $k<K$, a spherical-harmonic expansion approximates
 $\eta_k$ in $L_2$ by finitely many spherical harmonic polynomials of total degree
 at most $2M$. We choose a fixed
 $M=M(\delta)$ sufficiently large so that the total contribution of these approximation errors to $\Delta$ is $o(1/n)$.
This is possible because a polynomial of total degree $2r$
 appearing in this expansion has unconditional second moment at most
 $O(d^{-r})$. 
Therefore, $\Delta$ can be bounded in terms of $\Post(D)$ for a fixed finite
 collection of bounded-degree spherical harmonic polynomials $D$ of variables $x_1,\ldots,x_k$. 
 

 \item The cavity argument in Sections~\ref{sec:cavity-recurrence}
 and~\ref{sec:closing-recursion} bounds these posterior moments in terms of
 $\Delta$ itself. For a given $D$, select one of its variables,
 say $x_u$. We first omit the finitely many edges joining $u$ to the
 other vertices in $D$, at a controlled cost. Conditioning on all
 edges not incident to $u$, the Bayes rule expresses the resulting posterior
 mean as a ratio whose denominator is determined by the conditional law of
 the remaining edges incident to $u$. The same quantity $\Delta$ can be used to control the
 contribution due to the denominator being too small. 
When the denominator is bounded from below, 
by averaging over $x_u$ and generalizing the Fourier-harmonic expansion in the previous steps, we relate
 $\Post(D)$ to the posterior moments of other spherical harmonic polynomials. 
This yields a recurrence for the posterior moments of bounded-degree spherical harmonic polynomials.

 \item Substituting the posterior bounds obtained from the cavity recurrence
 into~\prettyref{eq:Delta_expansion} yields the self-bounding inequality
 \[
  \Delta\le o(1/n)+o(1/(np))\Delta,
 \]
resulting in $\Delta=o(1/n)$ since $np\ge n^\delta$.
The KL reduction \prettyref{eq:outline-KL} then gives
$\KL(\Pbb_{\mathrm{RGG}}\Vert\Pbb_{\mathrm{ER}})=o(1)$, completing the
 proof of Theorem~\ref{thm:main-detection}.
\end{enumerate}

\subsection{KL reduction}
\label{sec:KL-reduction}
Throughout this section we denote $n'=n-1$. For any realization of $x_1,\dots,x_{n'}\in \mathbb{S}^{d-1}$ and the corresponding geometric graph $G_{n'}$, we view $x_n\equiv z$ as an independent vertex. The new row of the adjacency matrix is given by 
\begin{equation}
\bY=(y_i)_{i=1}^{n'}
 =\bigl(\one\{\ip{z}{x_i}\ge\tau\}\bigr)_{i=1}^{n'}\,.
\label{eq:bY}
\end{equation}
We will compare $\mathcal L(\bY\mid G_{n'})$, the conditional law of $\bY$ given $G_{n'}$, with the reference measure
$Q=\operatorname{Bern}(p)^{\otimes n'}$. Denote
\[
 \mathcal K
 =\E_{G_{n'}}\left[
  \KL\bigl(\cL(\bY\mid G_{n'})\Vert Q\bigr)
 \right]\,. 
\]
Let $\Lambda$ denote the density of $\cL(\bY\mid G_{n'})$ relative to
$Q$.\footnote{Here, $\Lambda$ implicitly depends on $G_{n'}$, though we suppress this from the notation.} For a 
 suitably large constant $B_*$, define 
\[
 r^+= \lceil 2np+100\log n \rceil + B_*,\qquad
 T^+
 =\left\{y\in\{0,1\}^{n'}:
   \sum_{i=1}^{n'} y_i\le r^+\right\},
\]
and a truncated $\chi^2$-divergence:
\begin{equation}
 \Delta
 =\E_{G_{n'}}\bigg[\int_{T^+}(\Lambda-1)^2\,dQ\bigg]\,.
 \label{eq:Delta-m}
\end{equation}

\begin{theorem}
\label{thm:KL-reduction}
We have
\begin{equation}
 \KL(\Pbb_{\mathrm{RGG}}\Vert\Pbb_{\mathrm{ER}})
 \le n'\mathcal K\le n'\Delta+o(1)\,.
 \label{eq:KL-reduction}
\end{equation}
Consequently, in order to prove Theorem~\ref{thm:main-detection}, it suffices to prove $\Delta=o(1/n)$.

\end{theorem}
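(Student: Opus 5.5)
The plan has two parts: a chain-rule/data-processing step for the first inequality, and a truncation step for the second.

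\textbf{First inequality.} Reveal the vertices one at a time. By the KL chain rule,
\[
\KL(\Pbb_{\mathrm{RGG}}\Vert\Pbb_{\mathrm{ER}})=\sum_{m=1}^{n-1}\E_{G_m}\big[\KL(\cL(\bY^{(m)}\mid G_m)\Vert \Bern(p)^{\otimes m})\big],
\]
where $\bY^{(m)}$ is the neighborhood of vertex $m+1$ among $[m]$. Next I claim each summand is at most the term with $m=n'$. Couple the first $m$ vertices with the full configuration and restrict attention to coordinates in $[m]$. Then $\cL(\bY^{(m)}\mid G_m)$ is the marginal on $[m]$ of $\cL(\bY\mid G_m)$, and $G_m$ is a function of $G_{n'}$. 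Averaging over $G_{n'}$ given $G_m$ and using convexity of KL gives
\[
\KL(\cL(\bY\mid G_m)\Vert Q)\le \E\big[\KL(\cL(\bY\mid G_{n'})\Vert Q)\mid G_m\big].
\]
Data processing under projection to coordinates in $[m]$ then bounds the summand by $\mathcal K$. Summing the $n'$ terms gives $\KL\le n'\mathcal K$.

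\textbf{Second inequality: splitting $\mathcal K$.} Write
\[
\mathcal K=\E\Big[\int \Lambda\log\Lambda\,dQ\Big].
\]
Split the integral over $T^+$ and its complement. On $T^+$, use $\Lambda\log\Lambda\le(\Lambda-1)+(\Lambda-1)^2$. Integrating over $T^+$ gives $\int_{T^+}(\Lambda-1)\,dQ+\int_{T^+}(\Lambda-1)^2\,dQ$. Since $\int(\Lambda-1)\,dQ=0$, the linear term equals $-\int_{(T^+)^c}(\Lambda-1)\,dQ=Q((T^+)^c)-\Pbb(\bY\notin T^+\mid G_{n'})$.

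\textbf{Controlling the complement.} On $(T^+)^c$ we need $\E\int_{(T^+)^c}\Lambda\log\Lambda\,dQ$ to be small. Here $\Lambda\le 1/Q(y)$ gives $\log\Lambda\le n'\log(1/p)+n'\log\frac{1}{1-p}$, which is polynomial in $n$. So this term is at most $O(nL)\,\Pbb(\deg(n)>r^+)$. Under the RGG, $\deg(n)\sim\mathrm{Bin}(n',p)$ exactly, because given $z$ the events $\{\ip{z}{x_i}\ge\tau\}$ are i.i.d.\ $\Bern(p)$. A Chernoff bound with threshold $2np+100\log n$ then gives probability at most $n^{-10}$ or smaller. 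The same bound controls $Q((T^+)^c)$. Collecting terms gives $\mathcal K\le\Delta+O(n^{-5})$, so $n'\mathcal K\le n'\Delta+o(1)$.

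\textbf{Main obstacle.} The step needing care is the monotonicity of the per-step conditional KL, i.e.\ getting the conditioning direction right in the convexity/data-processing argument. It is a standard fact, but the coupling must be stated carefully. A fallback is to keep $\sum_m \mathcal K_m$ and run the later argument for each $m\le n'$. Everything else, including the Chernoff bound, is routine.
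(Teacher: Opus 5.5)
Your proposal is correct and follows essentially the same route as the paper. For the first inequality you use the chain rule, then exchangeability (your coupling of vertex $m+1$ with vertex $n$), data processing under projection to $[m]$, and convexity of KL to get $\KL(\Pbb_{\mathrm{RGG}}\Vert\Pbb_{\mathrm{ER}})\le n'\mathcal K$. For the second you use the same truncated-$\chi^2$ bound (derived inline via $x\log x\le x(x-1)$, with $\Lambda\le 1/Q$ on $(T^+)^c$), a binomial tail bound, and the fact that the unconditional law of $\bY$ is $Q$; your inline version is marginally sharper, since the linear term $Q((T^+)^c)-\Pbb(\bY\notin T^+\mid G_{n'})$ averages to exactly zero.
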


Results similar to Theorem~\ref{thm:KL-reduction} have already been established in~\cite[Claim 8.2 and Lemma 8.3]{liu2022testing}. We include a short proof below for completeness. We first record an elementary inequality, bounding KL divergence by a truncated $\chi^2$-divergence:

\begin{lemma}
\label{l:truncated.chi.square}
Let $P,Q$ be probability measures on a finite set $\Omega$, with
$P\ll Q$, and write $\Lambda=dP/dQ$ and
$q_*=\min_{\omega\in\Omega}Q(\omega)$. Then, for every $T\subseteq\Omega$, we have
\[
\KL(P\Vert Q)
\le
\int_T(\Lambda-1)^2\,dQ
+
P(T^c)\log\frac1{q_*}
+
Q(T^c).
\]
\end{lemma}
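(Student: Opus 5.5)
We split the KL sum $\KL(P\Vert Q)=\sum_{\omega}P(\omega)\log\Lambda(\omega)$ into the part over $T$ and the part over $T^c$, and bound each separately.

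\emph{The part over $T$.} Here we use the elementary inequality $\log x\le x-1$, which gives $x\log x\le x(x-1)=(x-1)^2+(x-1)$ for $x\ge0$, with the convention $0\log 0=0$. Applying it pointwise yields
\[
\int_T\Lambda\log\Lambda\,dQ\le\int_T(\Lambda-1)^2\,dQ+\int_T(\Lambda-1)\,dQ .
\]
The last term equals $P(T)-Q(T)=Q(T^c)-P(T^c)$.

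\emph{The part over $T^c$.} Since $P(\omega)\le1$, we have $\Lambda(\omega)=P(\omega)/Q(\omega)\le 1/q_*$. Hence
\[
\int_{T^c}\Lambda\log\Lambda\,dQ=\sum_{\omega\in T^c}P(\omega)\log\Lambda(\omega)\le P(T^c)\log\frac1{q_*} .
\]
Points with $P(\omega)=0$ contribute zero, so they cause no trouble.

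\emph{Combining.} Adding the two bounds gives
\[
\KL(P\Vert Q)\le\int_T(\Lambda-1)^2\,dQ+P(T^c)\log\frac1{q_*}+Q(T^c)-P(T^c),
\]
and dropping the term $-P(T^c)\le0$ yields the claim.

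The only point needing care is the pointwise inequality $x\log x\le (x-1)^2+(x-1)$ on $[0,\infty)$, including at $x=0$, where both sides equal $0$.
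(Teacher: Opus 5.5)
Your proof is correct and is essentially the paper's argument. The paper writes $\KL(P\Vert Q)=\int h(\Lambda)\,dQ$ with $h(x)=x\log x-x+1$ and bounds $h(\Lambda)$ pointwise by $(\Lambda-1)^2$ on $T$ and by $\Lambda\log(1/q_*)+1$ on $T^c$; this is the same bookkeeping as your computing the linear term $\int_T(\Lambda-1)\,dQ=Q(T^c)-P(T^c)$ explicitly and then dropping $-P(T^c)$.
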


\begin{proof}
Let $h(x)=x\log x-x+1$. Since $\int(\Lambda-1)\,dQ=0$, we have
$\KL(P\Vert Q)=\int h(\Lambda)\,dQ$. On $T$, the inequality
$\log x\le x-1$ gives $h(\Lambda)\le(\Lambda-1)^2$. Moreover,
$\Lambda\le q_*^{-1}$, so on $T^c$ we have
$h(\Lambda)\le \Lambda\log(q_*^{-1})+1$. Integrating over $T$ and $T^c$
gives the result.
\end{proof}

\begin{proof}[\hypertarget{proof:t.KL-reduction}{Proof of Theorem~\ref{thm:KL-reduction}}] 
We prove the first inequality by KL chain rule and monotonicity. For $1\le m\le n'$, recall that $G_m$ is the
graph induced by the first $m$ latent points, and $z\in \mathbb{S}^{d-1}$ is independent of $G_m$. 
Denote by $A_{ij}$ the indicator of the edge between $i$ and $j$.
For \ER~graph, $A_{ij}$'s are iid as $\Bern(p)$.

Revealing one vertex at a time, the KL divergence between RGG and  \ER~satisfies
\begin{align*}
\KL(\Pbb_{\mathrm{RGG}}\Vert\Pbb_{\mathrm{ER}}) 
\stepa{=} & 
\sum_{m=1}^{n'}
\Expect_{G_m}\KL(\Law(A_{m+1,1:m}\mid G_m) \| \Bern(p)^{\otimes m})    \\
\stepb{=} & 
\sum_{m=1}^{n'}
\Expect_{G_m}
\KL(\Law(A_{n,1:m}\mid G_m) \| \Bern(p)^{\otimes m})    \\
\stepc{\leq} & 
\sum_{m=1}^{n'}
\Expect_{G_{m}}
\KL(\Law(A_{n,1:n'}\mid G_m) \| \Bern(p)^{\otimes n'})    \\
\stepd{\leq} & 
\sum_{m=1}^{n'}
\Expect_{G_{n'}}
\KL(\Law(A_{n,1:n'}\mid G_{n'}) \| \Bern(p)^{\otimes n'})   = n' \calK,
\end{align*}
where 
(a) follows from the chain rule of KL divergence;
(b) is by symmetry;
(c) is by data processing inequality;
(d) is because further conditioning increases the KL divergence (i.e., the  convexity).


Apply Lemma~\ref{l:truncated.chi.square} conditionally on $G_{n'}$ with
$P=\cL(\bY\mid G_{n'})$, $Q=\operatorname{Bern}(p)^{\otimes n'}$, and
$T=T^+$. For all sufficiently large $n$, $p\le 1/2$, so
$q_*=p^{n'}$. Averaging over $G_{n'}$, and using the fact that the
unconditional law of $\bY$ is exactly $Q$, gives
\[
\mathcal K
\le
\Delta+
\bigl(n'\log(1/p)+1\bigr)Q((T^+)^c)\,.
\]
If $X\sim\operatorname{Bin}(n',p)$, then
$Q((T^+)^c)\le\mathbb{P}(X>2np+100\log n+B_*)$. A standard Bernstein
bound gives $Q((T^+)^c)\le n^{-30}$ for all sufficiently large
$n$. Since $p\ge n^{-1+\delta}$ implies
$\log(1/p)=O(\log n)$, it follows that
$
\mathcal K\le \Delta+o(1/n).
$
Combining this with the first inequality yields
\[
\KL(\Pbb_{\mathrm{RGG}}\Vert\Pbb_{\mathrm{ER}})
\le n'\mathcal K
\le n'\Delta+o(1)\,,
\] which concludes the proof. 
\end{proof}


\subsection{Fourier reduction of the row likelihood}
\label{sec:Fourier-reduction}

For $Q=\Bern(p)^{\otimes n'}$, denote the Fourier basis functions~\cite{o2014analysis}
\begin{equation}
    \psi_S(y)
    \equiv \psi_S(y_S) 
    \equiv \prod_{i\in S} \psi(y_i)
    =\prod_{i\in S}\frac{y_i-p}{\sqrt{p(1-p)}}\,,
    \label{eq:fourierbasis}
\end{equation}
indexed by subsets $S\subseteq[n']$.
Denote the likelihood ratio of $\Law(\bY\mid x_1,\ldots,x_{n'},z)$ with respect to 
the reference measure $Q$ by
$\Lambda_{x_{1:n'},z}$.
Since $\bY$ is a deterministic function \prettyref{eq:bY} of $x_{1:n'}$ and $z$, recalling the definition \prettyref{e:def.kappa} of $\kappa$, it is straightforward to calculate the Fourier coefficients of $\Lambda_{x_{1:n'},z}$ in the $\psi_S$ basis, resulting in
	\[
	\Lambda_{x_{1:n'},z}(y)
	= \sum_{S\subseteq[n']}
	\bigg(\prod_{i\in S} \kappa(\ip{z}{x_i})\bigg) \psi_S(y)\,.
	\]
Averaging over $z$, we obtain the expansion of the likelihood ratio of $\Law(\bY\mid x_1,\ldots,x_{n'})$ over $Q$ as
	\[
	\Lambda_{x_{1:n'}}(y)
= \sum_{S\subseteq[n']}
\underbrace{\E_z\bigg[
		\prod_{i\in S} \kappa(\ip{z}{x_i})\bigg]}_{\equiv \eta_S(x_S)} \psi_S(y)
	\]
where $\eta_S$ is the same as the function $\eta_{|S|}$ of \eqref{eq:eta} evaluated at the points
indexed by $S$.
Further averaging over $x_1,\ldots,x_{n'}$ conditional on $G_{n'}$, we obtain the Fourier expansion
of the conditional row likelihood ratio $\Lambda$:
\begin{equation}
 \Lambda(y)-1
 =\sum_{\varnothing\ne S\subseteq[n']}
   \E[\eta_S\mid G_{n'}]\psi_S(y)\,.
 \label{eq:row-Fourier}
\end{equation}
Using this Fourier expansion, the following result bounds the truncated $\chi^2$-divergence $\Delta$ appearing in Theorem~\ref{thm:KL-reduction}. It should be compared with \cite[Lemma 5]{mao2026random}, the key difference being that Theorem~\ref{thm:Fourier-reduction} gives control over the high-frequency Fourier contribution. We present its proof below, assuming some results which are deferred to Section~\ref{sec:high-level-fourier}:

\begin{theorem}
\label{thm:Fourier-reduction}
For any  fixed integer $K>2/\delta$,
the quantity $\Delta$ defined by
\eqref{eq:Delta-m} satisfies the bound
\begin{equation}
 \Delta
 \le 2 \sum_{k=2}^{K-1}\binom {n'}k\Post(\eta_k)+o(1/n)\,,
 \label{eq:Fourier-reduction}
\end{equation}
with $\Post$ defined by \eqref{eq:post}.\footnote{The quantity $\Post(\eta_k)$ appearing in \eqref{eq:Fourier-reduction} is the same as the quantity $g(k)$ appearing in \cite[Lemma 5]{mao2026random} and \cite[Equation~(2)]{du2026resolution}.}
\end{theorem}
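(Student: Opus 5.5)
Write $\Lambda-1=F_{<K}+F_{\ge K}$, where $F_{<K}=\sum_{1\le|S|<K}\E[\eta_S\mid G_{n'}]\psi_S$ is the low-frequency part of the Fourier expansion \eqref{eq:row-Fourier} and $F_{\ge K}$ is the rest. Since $(a+b)^2\le 2a^2+2b^2$, we have
\[
\int_{T^+}(\Lambda-1)^2\,dQ\le 2\int_{T^+}F_{<K}^2\,dQ+2\int_{T^+}F_{\ge K}^2\,dQ .
\]
For the low-frequency part we drop the truncation, $\int_{T^+}F_{<K}^2\,dQ\le\int F_{<K}^2\,dQ$, and use Parseval. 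By exchangeability of $x_1,\dots,x_{n'}$ given the law of $G_{n'}$, each $S$ with $|S|=k$ contributes $\E[(\E[\eta_k(x_{1:k})\mid G_{n'}])^2]$. Summing gives $2\sum_{k=1}^{K-1}\binom{n'}{k}\E[(\E[\eta_k\mid G_{n'}])^2]$.

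The $k=1$ term vanishes: $\eta_1(x)=\E_z\kappa(\ip{z}{x})=0$ identically by rotation invariance. This leaves the range $2\le k\le K-1$. One small point: $\Post$ in \eqref{eq:post} conditions on $G_n$, while here we condition on $G_{n'}$. Since $\eta_k$ depends only on $x_1,\dots,x_k$ and $G_{n'}$ is the graph on those first $n'$ points, this is a relabeling, namely the posterior for a graph on $n-1$ vertices. I would either interpret $\Post$ accordingly or note the two are equal up to monotone conditioning (conditioning on more increases the second moment of the posterior mean, so the $G_n$ version dominates). This settles the low-frequency part.

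The main task is to show $\E_{G_{n'}}\int_{T^+}F_{\ge K}^2\,dQ=o(1/n)$; this is the step I expect to be the main obstacle, and it is where $K>2/\delta$ enters. Parseval is useless here, since without truncation the unconditional bound $\binom{n'}{k}\E\eta_k^2$ is too large when $d<n$. Instead I would exploit the fact that on $T^+$ the vector $y$ has at most $r^+=O(np)$ ones. Write $F_{\ge K}(y)=\E\bigl[\sum_{|S|\ge K}\prod_{i\in S}\kappa(\ip{z}{x_i})\psi_S(y)\mid G_{n'}\bigr]$. Then use Jensen in the form $(\E[\cdot\mid G])^2\le\E[(\cdot)^2\mid G]$, after which the unconditional average over $x$ and $z$ is available.

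For fixed $x,z$, the inner sum $\sum_{|S|\ge K}\prod_{i\in S}\kappa_i\psi_i(y_i)$ is the tail of $\prod_i(1+\kappa_i\psi(y_i))$ beyond degree $K$. Each factor is explicit: $1+\kappa_i\psi(y_i)=\one\{y_i=\one\{\ip{z}{x_i}\ge\tau\}\}/Q(y_i)$. So I would compute $\int_{T^+}(\cdot)^2dQ$ by expanding the square over pairs $(S,S')$ and grouping by the number $j$ of ones of $y$ inside $S\cup S'$. On $T^+$ we have $j\le r^+$, so most of the $|S|$ coordinates must be zeros. For those coordinates $\psi(0)=-\sqrt{p/(1-p)}$ is small, and $\kappa$ is of order $\sqrt p$ off the cap. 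This is the degree-truncation gain: each zero coordinate costs a factor of order $p$ against a combinatorial factor $n$.

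The aim is a bound of the form $\sum_{k\ge K}C^k\binom{n}{k}\bigl[(r^+/n)^{k}+\text{terms with }p^{k}\bigr]$, i.e.\ roughly $\sum_{k\ge K}(Cnp/n\cdot\text{polylog})^k\cdot$ const, which is summable. Each additional order gives at least a factor $n^{-\delta}$ up to logs, so the tail from $K$ on is $O(n^{1-\delta K}\,\mathrm{polylog})$ relative to the leading normalization, and $K\delta>2$ makes it $o(1/n)$. If the direct pairing computation is too lossy, my fallback is a hypercontractivity/level-$k$ inequality on the slice $\{|y|\le r^+\}$. That controls the weight of degree-$\ge K$ Fourier coefficients restricted to sparse $y$, using that the unconditional law of $\bY$ given $z$ is exactly $Q$. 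Either way, the deferred results in Section~\ref{sec:high-level-fourier} would supply this tail estimate, and combining it with the low-frequency bound gives \eqref{eq:Fourier-reduction}.
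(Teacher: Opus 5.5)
Your low-frequency step (dropping the truncation, Parseval, $\eta_1\equiv0$, exchangeability, and Jensen via $\sigma(G_{n'})\subseteq\sigma(G_n)$) is exactly the paper's. The gap is in the high-frequency step, where your concrete mechanism fails. When you apply Jensen to $\E\bigl[\sum_{|S|\ge K}\prod_{i\in S}\kappa(\ip{z}{x_i})\psi_S(y)\mid G_{n'}\bigr]$, you move $z$ inside the square. This discards the average over $z$, which is the only source of smallness.

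For fixed $z$ the variables $\kappa(\ip{z}{x_i})$ are i.i.d.\ with mean $0$ and variance $1$. So after averaging over $x$ your bound equals exactly
\[
\sum_{|S|\ge K}\int_{T^+}\psi_S^2\,dQ=\sum_{k\ge K}\binom{n'}{k}w_{n',k,r^+},
\qquad
w_{n',k,r^+}=\Pbb\bigl(\operatorname{Bin}(k,1-p)+\operatorname{Bin}(n'-k,p)\le r^+\bigr).
\]
Since $w_{n',k,r^+}\ge 1/2$ whenever $k\le r^+/2$, this is at least $\frac12\binom{n'}{\lfloor r^+/2\rfloor}=\exp(\Theta(np\log(1/p)))$. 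This is no accident. For fixed $x,z$ the product $\prod_i(1+\kappa_i\psi(y_i))=\one\{y=\bY\}/Q(y)$ is a point mass. You are therefore comparing with the law of $\bY$ given all latent positions, whose truncated $\chi^2$-divergence averages to $|T^+|-Q(T^+)$.

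Sparsity of $y$ on $T^+$ only suppresses levels $k\gg r^+$. For $K\le k\lesssim np$ there is no ``factor $p$ per zero coordinate'' gain. Your target bound also drops the combinatorial factor: $\binom nk(r^+/n)^k\approx (r^+)^k/k!$, which sums to $e^{\Theta(r^+)}$. Consistently with this, nothing in your sketch uses $d\gg(npL)^3$, which is exactly what must make this term small. Your level-$k$-inequality fallback is blind to $d$ in the same way.

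What is needed is to keep $z$ integrated out. This is what Propositions~\ref{p:high-Fourier-tail} and~\ref{p:bounding-B} do, applied with $m=n'$, $h=0$, $D\equiv1$, $\mathcal F=\sigma(G_{n'})$. Apply Jensen only to the conditional expectation over the $x$'s, bounding by $\E\int_{T^+}\bigl(\sum_{|S|\ge K}\eta_S\psi_S\bigr)^2dQ$ with $\eta_S=\E_z\prod_{i\in S}\kappa(\ip{z}{x_i})$. Cross terms $S\ne S'$ still vanish, since a lone factor $\kappa(\ip{z}{x_i})$ with $i\in S\triangle S'$ averages to zero in $x_i$. The diagonal term is now $w_{n',k,r^+}\E[\eta_S^2]$, with $\E[\eta_S^2]=\E_\rho[\eta_2(\rho)^k]$ and $\rho=\ip{z}{z'}$ for an independent replica $z'$.

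On $\{|\rho|\lesssim 1/L\}$ one has $|\eta_2(\rho)|\lesssim pL|\rho|+pL^2(\rho^2+d^{-1})$. This produces per-level factors $npL/\sqrt d$ and $npL^2/d$, both at most $n^{-\delta/2}$; this is where $K>2/\delta$ enters. The truncation is used only on the rare event $|\rho|\gtrsim 1/L$, where $\eta_2$ need not be small. There the generating function $\sum_k\binom{n'}{k}w_{n',k,r^+}u^k$ is capped by its value at $u=1$, namely $\sum_{s\le r^+}\binom{n'}{s}\le e^{CnpL}$. This is beaten by $\Pbb(|\rho|\gtrsim 1/L)\le e^{-d/(CL^2)}$. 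Without the truncation the cap would be $2^{n'}$, which is why truncation is indispensable when $d<n$.

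Your closing deferral to Section~\ref{sec:high-level-fourier} is fine as a citation. However, the mechanism you describe is not the one used there and cannot be made to work.
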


\begin{proof}
We bound the truncated $\chi^2$-divergence via the Fourier expansion of $\Lambda-1$, split into frequencies below and above a fixed cutoff $K$. The low-frequency part is controlled by enlarging the integral from $T^+$ to the full cube and applying Parseval's identity.
For the high-frequency part, it is crucial to retain the truncation to $T^+$, and
the resulting contribution is $o(1/n)$ for a sufficiently large fixed $K=K(\delta)$: this is the content of Propositions~\ref{p:high-Fourier-tail} and \ref{p:bounding-B} in Section~\ref{sec:high-level-fourier}.

More precisely, using the exact Fourier expansion \eqref{eq:row-Fourier} and the fact that $ \eta_{\{i\}}\equiv0$ for any $i\in[n']$, we can decompose $\Lambda-1=U_K+V_K$ where $U_K$ and $V_K$ capture respectively the low-frequency and high-frequency Fourier contributions:
\begin{align*}
U_K &:=\sum_{2\le |S|<K}\E[\eta_S\mid G_{n'}]\psi_S\,,\\
V_K &:=\sum_{|S|\ge K}\E[\eta_S\mid G_{n'}]\psi_S\,.
\end{align*}
Then, again recalling the definition \eqref{eq:Delta-m} of $\Delta$, we can bound
\[
\Delta
\le
2\E_{G_{n'}}\bigg[\int_{T^+}
    (U_K)^2\,dQ\bigg]
+
2\E_{G_{n'}}\bigg[\int_{T^+}
    (V_K)^2\,dQ\bigg]\,.
\]
For the first term, dropping the $T^+$ truncation and using Parseval's identity gives
\[
\E_{G_{n'}}\bigg[\int_{T^+}
    (U_K)^2\,dQ\bigg]
\le
\E_{G_{n'}}\bigg[\int (U_K)^2\,dQ\bigg]
=
\sum_{2\le |S|<K}
\E_{G_{n'}}\bigg[
    \E(\eta_S\mid G_{n'})^2\bigg]
\le \sum_{k=2}^{K-1}
    \binom {n'}k\Post(\eta_k)\,,
\]
where the last inequality follows by exchangeability and the conditional Jensen's inequality. (Note that this argument is essentially the same as that in \cite[Lemma~5]{mao2026random}.) 
For the high-frequency term, apply Proposition~\ref{p:high-Fourier-tail} in Section~\ref{sec:high-level-fourier} with $m=n'$, $h=0$, $D\equiv 1$ and $\mathcal F=\sigma(G_{n'})$. Combining with Proposition~\ref{p:bounding-B} in Section~\ref{sec:high-level-fourier}, and choosing $K>2/\delta$, we obtain
\[
\E_{G_{n'}}\bigg[\int_{T^+}(V_K)^2\,dQ\bigg]=o(1/n)\,.
\]
Combining the preceding estimates gives the claim.
\end{proof}

\subsection{Harmonic expansion of the local interactions}
\label{sec:harmonic-expansion}

For $\ell\ge0$, let $\calH_{\ell,d}$ be the space of degree-$\ell$
spherical harmonics on $\mathbb S^{d-1}$ and let
$N_{\ell,d}=\dim(\calH_{\ell,d})$.  Fix an orthonormal basis
$\{Y_{\ell,d,j}:j\in[N_{\ell,d}]\}$ of each $\calH_{\ell,d}$, and write
\[
 Y_{\ell,d}(x)
 =\bigl(Y_{\ell,d,j}(x):j\in[N_{\ell,d}]\bigr).
\]
For the remainder of this subsection, suppress the ambient-dimension
subscript and write
\[
 \calH_\ell:=\calH_{\ell,d},\qquad
 N_\ell:=N_{\ell,d},\qquad
 Y_{\ell,j}:=Y_{\ell,d,j},\qquad
 Y_\ell:=Y_{\ell,d}.
\]
Let $V\subset[n]$ and let
$\bsell=(\ell_v:v\in V)$, with $\ell_v\ge1$. Consider the tensor-product space of spherical harmonic polynomials:
\begin{equation}
\calH_{\bsell}:=
 \bigotimes_{v\in V}\calH_{\ell_v}
 :=\operatorname{span}\left\{
   \prod_{v\in V}f_v(x_v):
   f_v\in\calH_{\ell_v}\text{ for every }v\in V
 \right\}.
\label{eq:tensor-harmonics}
\end{equation}
(In the notation $\calH_{\bsell}$ we suppress the set $V$ of variables as it is encoded in the domain of $\bsell: V\to\naturals$.)
Since the functions
\[
 \prod_{v\in V}Y_{\ell_v,j_v}(x_v),
 \qquad
 \mathbf j=(j_v:v\in V)
 \in\prod_{v\in V}[N_{\ell_v,d}],
\]
form an orthonormal basis of
$\calH_{\bsell}$, every
$D(x_V)$ in this tensor-product space has the unique representation
\[ 
 D(x_V)
 =\sum_{\mathbf j}
 c_{\mathbf j}\prod_{v\in V}Y_{\ell_v,j_v}(x_v).
\]
Equivalently, $D$ can be written in the tensor inner-product form
\begin{equation}
 \label{eq:pure-block}
 D(x_V)=\left\langle \mathsf C_D,
      \bigotimes_{v\in V}Y_{\ell_v}(x_v)\right\rangle,
 \qquad \ell_v\ge1,
\end{equation}
where $\mathsf C_D=(c_{\mathbf j})_{\mathbf j}$ is the coefficient tensor of $D$.

For a nonzero $D\in \calH_{\bsell}$, we call $\bsell$ its \emph{multidegree} and
$\|\bsell\|_1$ its \emph{total degree}.\footnote{
This is well-defined because for fixed $V$ and distinct $\bsell\neq \bsell'$, 
we have $\calH_{\bsell} \perp \calH_{\bsell'}$ and $\calH_{\bsell} \cap \calH_{\bsell'} =\{0\}$, which follows from the orthogonality of $\calH_\ell$ for different $\ell$. Thus, every nonzero $D\in \cup \calH_\ell$ is contained in a unique $\calH_{\bsell}$.}
Note that $\calH_{\bsell}$'s are orthogonal for different multidegree $\bsell$.

For $k \ge 2$ and $q \ge 1$, define the subset of indices
\begin{equation}\label{e:def.calA}
\calA_{k,q}
:=
\left\{
\mathbf a\in\naturals^k:
\|\mathbf a\|_1=2q 
\right\}.
\end{equation}
The following result quantifies the orthogonal projection of $\eta_k$ 
as a function of $x_1,\ldots,x_k$ onto the direct sum of $\calH_{\bfa}$ for total degree $\|\bfa\|_1$ up to $2M$.

\begin{theorem}
\label{thm:harmonic-expansion}
Fix integers $K\ge3$ and $M\ge1$. For every $2\le k<K$, the quantity
$\eta_k$ defined by \eqref{eq:eta} has the orthogonal expansion
\begin{equation}
 \eta_k
 =\sum_{{ q}=\lceil k/2\rceil}^{M}
   \sum_{\bfa\in\calA_{k,q}}
   \gamma_{\bfa}D_{\bfa}
   +\cR_{k,M}
 \label{eq:harmonic-expansion}
\end{equation}
with $D_{\bfa}\in\calH_{\bfa}$.
There are constants $C_0,C_{K,M}>0$, such that
if $d\ge3$ and $0<p\le e^{-1}$, then for each multidegree
$\bfa$ with $\|\bfa\|_1=2q \le 2M$, we have
\begin{equation}
 |\gamma_{\bfa}|^2\le C_{K,M}p^kL^{2  q},\qquad
 \E[(D_{\bfa})^2]\le 
 \frac{C_{K,M}}{d^{ q}}\,.
 \label{eq:harmonic-component-scales}
\end{equation}
The only contribution to \eqref{eq:harmonic-expansion} from
  ${ q}=1$ is
 \begin{equation}
  k=2,\qquad \bfa=(1,1),\qquad
  D_{(1,1)}(x_1,x_2)=\ip{x_1}{x_2}\,.
  \label{eq:critical-component}
 \end{equation}
If, in addition, $d\ge C_0L^2$, the harmonic remainders
$\mathcal R_{k,M}$ satisfy
\begin{equation}
 \E[(\mathcal R_{k,M})^2]
 \le C_{K,M}\frac{p^kL^{2M+2}}{d^{M+1}},
 \label{eq:initial-harmonic-tail}
\end{equation}
where the expectation is over $x_1,\ldots,x_k$ sampled independently and uniformly from $\mathbb{S}^{d-1}$.
\end{theorem}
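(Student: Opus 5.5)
We will expand the cap kernel in Gegenbauer polynomials and then integrate out $z$ using the reproducing property. Write $\kappa(\ip{z}{x})=\sum_{\ell\ge1}\hat\kappa_\ell\,\ip{Y_\ell(z)}{Y_\ell(x)}$. Here $\hat\kappa_\ell=\E_u[\kappa(u)P_\ell(u)]$, where $P_\ell$ is the normalized Gegenbauer polynomial with $P_\ell(1)=1$ and $\sum_j Y_{\ell,j}(z)Y_{\ell,j}(x)=N_\ell P_\ell(\ip{z}{x})$. The $\ell=0$ term vanishes because $\kappa$ is centered.

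Substituting this into $\eta_k=\E_z\prod_i\kappa(\ip{z}{x_i})$ and multiplying out gives
\[
\eta_k=\sum_{\ell_1,\dots,\ell_k\ge1}\Big(\prod_i\hat\kappa_{\ell_i}\Big)\,\E_z\Big[\prod_i N_{\ell_i}P_{\ell_i}(\ip{z}{x_i})\Big].
\]
Each summand lies in $\calH_{\boldsymbol\ell}$. The reason is that, as a function of $x_i$, $P_{\ell_i}(\ip{z}{x_i})\in\calH_{\ell_i}$, and averaging over $z$ preserves this. We therefore set $\gamma_{\bfa}D_{\bfa}$ equal to the $\bfa$-summand, suitably normalized.

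Parity gives the restriction $\|\bfa\|_1=2q$. The product $\prod_iY_{\ell_i}(z)$ has parity $(-1)^{\sum\ell_i}$ in $z$, so its average vanishes unless $\sum\ell_i$ is even. Moreover, each $\ell_i\ge1$ forces $2q\ge k$, i.e.\ $q\ge\lceil k/2\rceil$. For $q=1$ we must have $k=2$ and $\bfa=(1,1)$. In that case $\E_z[N_1^2\ip{z}{x_1}\ip{z}{x_2}]=d\,\ip{x_1}{x_2}$, which gives \eqref{eq:critical-component} after absorbing constants into $\gamma_{(1,1)}$. The remainder $\cR_{k,M}$ is everything with $\|\boldsymbol\ell\|_1>2M$. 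Orthogonality of the summands follows from the orthogonality of the $\calH_{\boldsymbol\ell}$.

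For the size estimates \eqref{eq:harmonic-component-scales}, the key inputs are the classical asymptotics for the cap:
\[
\hat\kappa_\ell^2\,N_\ell\lesssim_\ell p\,L^\ell\quad\text{for fixed }\ell,\ \text{when } d\ge C_0L^2.
\]
This follows from $\tau\approx\sqrt{2L/d}$ and from $P_\ell(u)\approx$ the Hermite polynomial $d^{-\ell/2}\mathrm{He}_\ell(\sqrt d u)$ integrated against the tail. Concretely, $\E[\kappa(u)P_\ell(u)]\approx\sqrt{p}\,\cdot$ (tail density at $\tau$) $\cdot\,\mathrm{He}_{\ell-1}(\sqrt d\tau)/\sqrt{d^{\ell}}$, which gives $\hat\kappa_\ell\lesssim\sqrt p\,L^{\ell/2}/\sqrt{N_\ell}$. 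Alternatively, the bound follows from the Funk–Hecke formula together with the identity $\sum_\ell\hat\kappa_\ell^2N_\ell=\E\kappa^2=1$.

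Next we compute $\E_{x}[(\E_z\prod_iN_{\ell_i}P_{\ell_i}(\ip{z}{x_i}))^2]$. By the reproducing property this equals
\[
\E_{z,z'}\Big[\prod_iN_{\ell_i}P_{\ell_i}(\ip{z}{z'})\Big]=\E_u\Big[\prod_i N_{\ell_i}P_{\ell_i}(u)\Big],
\]
where $u=\ip{z}{z'}$. Using $P_\ell(u)\approx u^\ell$ at scale $u\sim d^{-1/2}$ and $N_\ell\asymp d^\ell$, this is of order $d^{\sum\ell_i}\,d^{-\sum\ell_i/2}=d^{q}$. Combined with $\prod\hat\kappa_{\ell_i}^2\lesssim p^kL^{2q}/d^{2q}$, the squared $L_2$ norm of the $\bfa$-component is $\lesssim p^kL^{2q}d^{-q}$. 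We then split this as $|\gamma_{\bfa}|^2\le C p^kL^{2q}$ and $\E D_{\bfa}^2\le C d^{-q}$.

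The main obstacle is the remainder \eqref{eq:initial-harmonic-tail}, because it requires uniform control over unbounded $\ell_i$. By orthogonality, $\E\cR_{k,M}^2$ is the corresponding tail sum, and by the reproducing identity it equals
\[
\E_{z,z'}\big[\prod_i\kappa_{>}\big],
\]
where the $\kappa$'s are restricted to the high-total-degree part. We plan to bound this directly. The starting point is
\[
\E_x\eta_k^2=\E_{z,z'}\big[(\E_x\kappa(\ip zx)\kappa(\ip{z'}x))^k\big]=\E_u\big[\rho(u)^k\big],
\]
where $\rho(u)=\sum_\ell\hat\kappa_\ell^2N_\ell P_\ell(u)$ is the cap-overlap correlation. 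We then Taylor-expand $\rho(u)^k$ in the variable $u$. The tail of that expansion beyond total degree $2M$ should be bounded by $C\,p^k(L u^2 d)^{M+1}$ on the typical range $|u|\lesssim\sqrt{L/d}$. Large deviations of $u$ contribute $e^{-cdu^2}$, which is negligible. The step to verify is that the total-degree-$>2M$ projection is dominated by this Taylor tail; we would handle this by truncating each $\ell_i\le 2M$ and using $|\rho^{(\ell)}|$ bounds, namely $|\rho(u)|\le1$ and $\rho(u)\lesssim p\,e^{O(\sqrt{dL}|u|)}$.
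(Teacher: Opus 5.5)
Your construction of the expansion is essentially the paper's. You expand $\kappa$ in zonal harmonics, integrate out $z$, group by multidegree, use parity and $a_i\ge1$ to get $q\ge\lceil k/2\rceil$, identify $D_{(1,1)}=\ip{x_1}{x_2}$, and compute $\E[(D_{\bfa})^2]$ through $\E_\rho[\prod_iP_{a_i}(\rho)]$ via the reproducing identity. The genuine gap is the remainder bound \eqref{eq:initial-harmonic-tail}, which you identify as the main obstacle but do not establish.

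By orthogonality, $\E[(\cR_{k,M})^2]=\sum_{\|\bfa\|_1>2M}\prod_iw_{a_i}\,\E_\rho[\prod_iP_{a_i}(\rho)]$ with $w_\ell=N_\ell\hat\kappa_\ell^2$. The truncation is on total degree, which couples the factors. So this is not $\E_{z,z'}$ of a product of separately truncated kernels, and it is not a Taylor tail of $\rho(u)^k$ in $u$ either. Indeed $P_\ell(u)$ contains monomials $u^{\ell-2j}$ with coefficients of order $d^{-j}$. At the typical scale $u\sim d^{-1/2}$ these are as large as the leading term, so high harmonic degrees feed every low-order Taylor coefficient (e.g.\ $\rho(0)=\sum_\ell w_\ell P_\ell(0)$ involves all even $\ell$). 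The comparison you defer is therefore the entire content of the estimate.

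Two further problems remain. Your proposed Taylor-tail bound $Cp^k(Lu^2d)^{M+1}$ averages to order $p^kL^{M+1}$ over $u$, so it lacks the required factor $d^{-(M+1)}$. And the pointwise bounds $|\rho|\le1$, $\rho\lesssim p\,e^{O(\sqrt{dL}|u|)}$ give no control over multi-indices with a very large entry $a_i$ (say $a_i\gg L$), where bounds on individual $w_{a_i}$ are useless.

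The paper handles the tail through explicit coefficient bounds. It shows
\[
0\le b_0:=\E_\rho\Bigl[\prod_iP_{a_i}(\rho)\Bigr]\le\min\{1/N_{\max_ia_i},(CA/d)^{A/2}\},\qquad A=\|\bfa\|_1 .
\]
Nonnegativity comes from a tensor-norm identity, the first upper bound from Bessel's inequality, and the second from the integral representation \eqref{eq:zonal-integral-ub} and H\"older. It also shows $w_\ell\le p[C(L+\ell)]^\ell/\ell!$ uniformly for $1\le\ell\le d$. Nonnegativity allows dropping constraints, and one splits at $Q\asymp L$:
\begin{itemize}
\item If $\max_ia_i\ge Q$, then $\sum_\ell w_\ell=1$ bounds that part by $1/N_Q\le(2Q/d)^Q$, which is negligible.
\item If $\max_ia_i<Q$, then $A\lesssim L$, and the two bounds give a geometric series in $(CL^2/d)^{A/2}$ over $A\ge2M+2$, summable once $d\ge C_0L^2$.
\end{itemize}

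The same coefficient bounds are what make your component estimates \eqref{eq:harmonic-component-scales} rigorous. As written, those estimates rest on heuristics (Hermite asymptotics, $P_\ell(u)\approx u^\ell$). Your derivation of $w_\ell\lesssim_\ell pL^\ell$ also assumes $d\ge C_0L^2$, whereas the statement requires it for all $d\ge3$. The suggested alternative via $\sum_\ell w_\ell=1$ only yields $w_\ell\le1$. The paper instead uses the exact identity $\alpha_\ell/N_\ell=\sqrt{p/(1-p)}\,u_dP_{\ell-1,d+2}(\tau)$, obtained from the Gegenbauer differential equation. It combines this with $\tau\le u_d\le\sqrt{2L/d}$ and \eqref{eq:zonal-integral-ub}.
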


The \hyperlink{proof:thm.harmonic-expansion}{proof of~\prettyref{thm:harmonic-expansion}} is deferred to~\prettyref{sec:high-harmonic-remainder}.

\subsection{Posterior bounds for spherical harmonics}
\label{sec:posterior-components}

Recall the definition of $L$ from \eqref{e:def.L.eps}. For $t\ge1$, define
\begin{equation}
 \nu_1=1,\qquad \nu_t=2t\quad(t\ge2),\qquad
 g_t=\frac{pL^{\nu_t}}{d^t}.
 \label{eq:gt}
\end{equation}
Thus $g_1=pL/d$ is the sharper target scale for the unique degree-two spherical harmonic, whereas $g_t=pL^{2t}/d^t$ for every $t\ge2$. The following is the principal technical result:

\begin{theorem}
\label{thm:posterior-components}
For all sufficiently large constants $K$ and $M$ (depending on $\delta$), 
there is a constant $C=C(\delta,K,M)$ such that for
every $D_{\bfa}$ in~\eqref{eq:harmonic-expansion} of total degree $\|\bfa\|_1=2q$, we have
\begin{equation}
 \Post(D_{\bfa})\le Cg_{ q}
 \bigg(1+ \frac{\Delta}{p}\bigg)
 \label{eq:component-posterior}
\end{equation}
for $\Delta$ as defined by \eqref{eq:Delta-m}, and $g_{ q}$ as defined by \eqref{eq:gt}. 
\end{theorem}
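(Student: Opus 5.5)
The plan is to prove \eqref{eq:component-posterior} through a cavity recurrence on posterior moments of bounded-degree harmonic polynomials, closed by induction on the total degree with a degree-dependent normalization. Fix $D=D_{\bfa}\in\calH_{\bfa}$ with variables $x_V$, $|V|=k<K$, and total degree $2q\le 2M$. First choose a vertex $u\in V$ with $a_u\ge1$. Let $G^{(u)}$ be the graph $G_n$ with the finitely many edges $\{u,v\}$, $v\in V\setminus\{u\}$, removed. By Jensen we can replace $\E[D\mid G_n]$ by a posterior that also averages over those edges. The cost is controlled by expanding the edge indicators in the kernel $\kappa$: each omitted edge contributes a factor of order $p$ times a posterior moment of a product of $D$ with $\kappa(\ip{x_u}{x_v})$. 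Its harmonic expansion, by the selection rules for products of harmonics and Theorem~\ref{thm:harmonic-expansion}, produces only components of controlled multidegree.

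Next, condition on all edges not incident to $u$, i.e.\ on $G_{n}$ restricted to $[n]\setminus\{u\}$ together with the other edges removed above. Bayes' rule writes the posterior mean as
\[
\E[D\mid G^{(u)}]=\frac{\E\bigl[\Lambda_{x_{-u}}(\bY_u)\,\E_{x_u}[D\,\cdots]\bigr]}{\Lambda(\bY_u)},
\]
where $\bY_u$ is the row of $u$ and $\Lambda$ is exactly the conditional row likelihood of Section~\ref{sec:Fourier-reduction}, with $u$ playing the role of vertex $n$. Split on the event $\{\Lambda(\bY_u)\ge 1/2\}\cap T^+$. On the complement, Chebyshev-type bounds give $Q$-probability at most $4\int_{T^+}(\Lambda-1)^2dQ$, plus a negligible $Q((T^+)^c)$. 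Since $|D|$ has second moment $O(d^{-q})$ and is bounded by a polynomial in $d$ after truncation, this piece contributes at most $C g_q\Delta/p$. This is the source of the $\Delta/p$ term, where the factor $1/p$ converts the $Q$-measure to the posterior normalization.

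On the good event the denominator is bounded below. The numerator is then expanded in Fourier modes $\psi_S(\bY_u)$ exactly as in \eqref{eq:row-Fourier}, but with $\prod_{i\in S}\kappa(\ip{x_u}{x_i})$ weighted by $D$. Averaging over $x_u$ (Funk--Hecke), the coefficient for $S$ becomes a function of $x_{V\setminus\{u\}}\cup x_S$. Its harmonic expansion contains the variables in $S$ with degrees summing, modulo parity, to at least $a_u$, and each $\kappa$ factor contributes $\sqrt p\,L^{1/2}$ per unit degree. Parseval over $S$ then gives
\[
\Post(D)\le C\sum_{S,\bfa'} n^{|S|}p^{|S|}\cdot(\text{coef})^2\,\Post(D_{\bfa'})+Cg_q\bigl(1+\Delta/p\bigr)+\text{high-frequency terms}.
\]
The high-frequency terms ($|S|\ge K$ and harmonic degree $>2M$) are handled by Propositions~\ref{p:high-Fourier-tail} and \ref{p:bounding-B} with general $D,h,\mathcal F$, together with the remainder bound \eqref{eq:initial-harmonic-tail}.

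The final step, which I expect to be the main obstacle, is closing the recurrence. Define $\Phi=\max_{\bfa}\Post(D_{\bfa})/(w_{\bfa}g_{\|\bfa\|_1/2})$ with weights $w_{\bfa}$ growing geometrically in a suitable potential, for example the number of vertices plus the total degree. The selection rules must ensure that every term on the right either has strictly smaller potential or carries a factor $(npL)^{j}/d^{j'}$ with $j\le 3j'$. Each such factor is $\le\varepsilon_n^{j'/3}$, or $o(1/(np))$ using $np\ge n^{\delta}$. This yields $\Phi\le C(1+\Delta/p)+o(1)\Phi$, hence the claim.

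The delicate case is $q=1$, that is $D=\ip{x_1}{x_2}$ with target $pL/d$ rather than $pL^2/d$. Here the recurrence returns to the same degree-two harmonic through paths of length three, namely triangles. These produce the factor $(npL)^3/d=\varepsilon_n$, and the log powers must be tracked exactly rather than crudely. I would handle this case separately, feeding in the bounds already obtained for $q\ge2$. If the naive potential fails to decrease, I would first try weighting by $(npL)^{-|V|}d^{q}$, so that each added vertex pays $np$.
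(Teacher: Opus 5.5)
Your architecture matches the paper's: cavity at a marked vertex, a Bayes ratio with the row likelihood, a bad-denominator event paid for by $\Delta$, Fourier and harmonic expansions with selection rules, and self-bounding. The closing step, however, has a genuine gap.

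You take $\Phi$ as a maximum over the finite family $\{D_{\bfa}\}$ with fixed scales $g_q$, but the cavity step leaves that family. It maps $D$ to $\mathsf T^{u,S}_{\bfa'}D$, a new polynomial whose coefficient tensor is a contraction of that of $D$. Its multidegree deletes $\ell_u$ and appends $\bfa'$, with $t(\mathsf T^{u,S}_{\bfa'}D)=t(D)+s$, $s\ge 0$, and $|V'|=|V|-1+|S|\ge |V|$. Two consequences follow.
\begin{itemize}
\item You need a supremum over an infinite class closed under these maps: all nonzero elements of the spaces $\calH_{\bsell}$ of bounded total degree. It must be normalized by $\E(D^2)$ through $g(D)=\E(D^2)pL^{\nu(D)}$, so that the one-step contraction $\E[(\mathsf T D)^2]\le Cd^{-s}\E(D^2)$ can be used without constants compounding along chains of $s=0$ steps.
\item Your potential (vertices plus degree) never strictly decreases, while the degree can grow indefinitely, so you need a termination device, and this is not a formality. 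Suppose you cut at total degree $2M$ and bound outputs of larger degree trivially by $\Post(\mathsf T D)\le\E[(\mathsf T D)^2]$. Those terms then have size about $g(D)/(p(np)^{s+2})$. For $s=1$ this is $g(D)\,n/(np)^4$ up to $\varepsilon_n$ and $L$ factors, which need not be $o(g(D))$ when $\delta<1/4$; constant-ratio geometric weights do not change this.
\end{itemize}
The paper normalizes instead by $g(D)(np)^{(t(D)-M)_+}$ over $\mathfrak H_{\le 2M}$. Each transition still costs at most $CL^M/(np)^2$, because $3q-k-s\ge 2h\ge 2$, while the terminal terms become $O(n/(np)^{M+4})=o(1)$. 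Alternatively, a bounded-depth iteration over nested bounded-degree classes, ending with the trivial bound $\Post(D)/g(D)\le 1/p$, also works.

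The earlier steps also lack the key estimate.
\begin{itemize}
\item \textbf{Dropping internal edges.} Jensen goes the wrong way for discarding the edges $\{u,v\}$, $v\in V\setminus\{u\}$: conditioning on less can only decrease $\Post$, so the cost term carries the whole burden. Expanding those indicators through $\kappa$ does not yield controlled multidegree, since $\kappa(\ip{x_u}{x_v})$ has harmonic components of every degree.
\item \textbf{Bad denominator.} A polynomial-in-$d$ (sup-norm) bound on $D$ loses powers of $d$; already for $D=\ip{x_1}{x_2}$ one has $\|D\|_\infty^2=d\,\E(D^2)$.
\end{itemize}
The missing tool is hypercontractive localization:
$\E(D^2\one_E)\le C\E(D^2)\Pbb(E)\log^{\nu(D)}(e/\Pbb(E))$, with $\nu=1$ for $\ip{x_u}{x_v}$ because it is distributed as a single coordinate.

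With it, the paper drops the internal edges directly. Off an event of probability $O(p)$ one has $\Pbb(\bZ=0\mid\bG_{-u},\bY_I)\ge 1/2$, which gives $\Post(D)\le 4\E[\E(D\mid\bG_{-u},\bY_I)^2]$ plus localized second moments over events of probability $O(p)$. Localization turns that error into exactly $Cg(D)$. This, not a triangle computation, is where the sharp scale $g_1=pL/d$ comes from.

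Likewise, the bad-denominator event has probability at most $2\Delta$. This gives $C\E(D^2)\phi_{\nu(D)}(\Delta)\le g(D)(C\Delta/p+o(1))$ after splitting at $\Delta=p^2$; the split is needed because $\log(e/\Delta)$ can exceed $L$.

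Finally, $q=1$ then needs no separate treatment. The step $\ip{x_1}{x_2}\mapsto\ip{x_j}{x_2}$ already carries $npL/d$ after summing over $j$. The increment $\nu(\mathsf T D)-\nu(D)\le 2s+1$ is absorbed by the $(np)^{-2}$ gain per step.
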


The \hyperlink{proof:t.posterior-components}{proof of Theorem~\ref{thm:posterior-components}} is given in  Section~\ref{sec:closing-recursion}.

\subsection{Proof of main result}
\label{sec:complete-main-proof}

\begin{proof}[\hypertarget{proof:main}{Proof of Theorem~\ref{thm:main-detection}}]
Recall from Theorem~\ref{thm:KL-reduction} that  it suffices to prove $\Delta=o(1/n)$, where $\Delta$ has the expansion \eqref{eq:Fourier-reduction} from Theorem~\ref{thm:Fourier-reduction}. We will bound the terms in this expansion using Theorems~\ref{thm:harmonic-expansion}~and~\ref{thm:posterior-components}. 
Define the normalized maximum over $D_{\bfa}$ in~\eqref{eq:harmonic-expansion}  by
\[
 Z_{\mathrm{init}}
 :=
  \max_{\substack{2\le k<K,\ 1\le q\le M\\
                  \bfa\in\calA_{k,q}}}
  \frac{\Post(D_{\bfa})}{g_q}\,,
\]
where we recall that $\calA_{k,q}$ is defined by \eqref{e:def.calA}. Theorem~\ref{thm:posterior-components} implies
$Z_{\mathrm{init}}\le C(1+\Delta/p)$.  Note that the total number of multidegrees $\bfa$ contributing to the above maximum is bounded by a constant depending only on $K,M$. By Theorem~\ref{thm:harmonic-expansion}, the Cauchy--Schwarz inequality,
and the conditional Jensen's inequality, we have
\[
 \Post(\eta_k)
 \le C\sum_{q,\bfa} (\gamma_{\bfa})^2
          \Post(D_{\bfa})
      +C\E[(\cR_{k,M})^2]\,.
\]
For $q=1$, it follows from \eqref{eq:critical-component} of Theorem~\ref{thm:harmonic-expansion} that we must have $k=2$ and $\bfa=(1,1)$. The corresponding contribution to
\eqref{eq:Fourier-reduction} is bounded by
\[
 C \cdot \binom{n'}{2} 
 \cdot (\gamma_{(1,1)})^2
 g_1 Z_{\mathrm{init}}
\le
 Cn^2 \cdot 
 p^2L^2
 \cdot \frac{pL}{d}Z_{\mathrm{init}}
 =\frac{C\varepsilon_n}{n}Z_{\mathrm{init}} = o(1/n) Z_{\mathrm{init}}
 \,,
\]
having used \eqref{eq:gt} 
together with the first part of \eqref{eq:harmonic-component-scales} from Theorem~\ref{thm:harmonic-expansion}.
For $q\ge2$, the corresponding contribution to
\eqref{eq:Fourier-reduction} is bounded by
\begin{align*}
C \cdot \binom{n'}{k} \cdot (\gamma_{\bfa})^2
g_q Z_{\mathrm{init}}
\le
 Cn^k \cdot p^kL^{2q}
 \cdot \frac{pL^{2q}}{d^q}Z_{\mathrm{init}}
 &=\frac Cn\,
   \frac{(\varepsilon_n)^q L^q}{(np)^{3q-k-1}}
   Z_{\mathrm{init}}=o(1/n) Z_{\mathrm{init}} \,,
\end{align*}
where the last estimate holds
because Theorem~\ref{thm:harmonic-expansion} only includes
$q\ge\lceil k/2\rceil$, and consequently 
$3q-k-1\ge q-1\ge1$.

By assumption~\eqref{eq:regime}, we have $d\gg L^2$. The remainder
bound~\eqref{eq:initial-harmonic-tail}
from Theorem~\ref{thm:harmonic-expansion} implies that
the aggregate contribution of $\E[(\cR_{k,M})^2]$ to
\eqref{eq:Fourier-reduction} is at most
\begin{align*}
  C_{K,M}\sum_{k=2}^{K-1}
 \frac{(np)^kL^{2M+2}}{d^{M+1}} 
 \ll \sum_{k=2}^{K-1}(np)^{-(3M+3-k)}L^{-(M+1)}
 =o(1/n)\,,
\end{align*}
where the first inequality follows from $d\gg(npL)^3$; the last equality holds by
$ np \ge n^\delta$ and choosing fixed $M$ sufficiently large so that
$\delta(3M+3-k)>1$ for every $2\le k<K$.  

Altogether, by substituting the above estimates into the expansion
\eqref{eq:Fourier-reduction} from
Theorem~\ref{thm:Fourier-reduction}, we conclude that there exist deterministic
$q_n,b_n\ge0$, such that
\begin{equation}
 \Delta\le q_nZ_{\mathrm{init}}+b_n,
 \qquad q_n=o(1/n),\qquad b_n=o(1/n).
 \label{eq:initial-affine-row}
\end{equation}
Since $Z_{\mathrm{init}}$ is in turn bounded by $ C(1+\Delta/p)$ by
Theorem~\ref{thm:posterior-components}, we obtain
\[
\Delta\le Cq_n+Cq_n\frac{\Delta}{p}+b_n\,.
\]
Rearranging gives
\[
\Delta\le \frac{Cq_n+b_n}{1-Cq_n/p}=o(1/n)\,,
\]
having used the bounds in \eqref{eq:initial-affine-row} together with the fact that $p\ge n^{-1+\delta}$, which implies $q_n/p=o(1)$. Having proved $\Delta=o(1/n)$, the main conclusion now follows from
Theorem~\ref{thm:KL-reduction}.\end{proof}

\section{Derivation of the cavity recurrence}
\label{sec:cavity-recurrence}

To control the posterior moments of $D_{\mathbf a} \in \calH_{\mathbf a}$ appearing
in Theorem~\ref{thm:harmonic-expansion}, we derive a rigorous version of the cavity recursion from
statistical physics.  The resulting recursion does not preserve multidegree:
starting from $\Post(D_{\mathbf a})$, it introduces posterior moments of spherical harmonics
with other multidegrees.  The finite collection appearing in
Theorem~\ref{thm:harmonic-expansion} is therefore not closed under the
recursion, so here we derive the recurrence for a general multidegree
spherical harmonic. The bounded-degree class needed to close the recurrence will be introduced in the next section.





Fix $V \subset [n]$ and a positive multidegree $\bsell=(\ell_v:v\in V)$ with $\bsell_v>0$ with the total degree even. Pick any nonzero spherical harmonic polynomial $D\equiv D(x_V) \in \calH_{\bsell}$,   as in \eqref{eq:pure-block}.   Let $t(D)=\|\bsell\|_1/2$ denote half of its total degree.  

Define 
\begin{equation} \label{eq:def-nu-D}
\nu(D):=
\begin{cases}
1, & \text{if } D= c \iprod{x_u}{x_v}\text{ for some $c \neq 0$,} \\
2t(D) = \|\bsell\|_1, & \text{otherwise.}
\end{cases}
\end{equation}
(Note in the first case that $2t(D)=2$. This should be compared with the definition of $\nu_t$ in \eqref{eq:gt}.) Recalling the definition of $L$ from 
\eqref{e:def.L.eps}, we also define
\begin{equation}
g(D):=\E(D^2)pL^{\nu(D)}\,.
\label{eq:gD}
\end{equation}
Thus $g(cD)=c^2g(D)$ for every scalar $c$, while $\nu(D)$ is invariant under
nonzero rescaling. As we will see, $g(D)$ is the target scale for $\Post(D)$.

We first provide a generic bound on the truncated second moment of $D$, by using H\"older's inequality to decouple $D$ and the indicator. The higher moments of the polynomial $D$ are controlled in terms of its second moment using hypercontractivity.

\begin{lemma}[truncated second moment of spherical harmonics]
\label{lem:uniform-localization}
There exists a constant $C$ 
depending only on the total degree $2 t(D)$ such that for every event
$E\in\sigma(x_1,\ldots,x_n)$ with $0<\Pbb(E)\le1/2$,
\begin{equation}
 \E(D^2\one_E)
 \le C\E(D^2)\Pbb(E)
    \log^{\nu(D)}\!\bigg(
    \frac e{\Pbb(E)}\bigg)\,,
 \label{eq:state-event-moment}
\end{equation}
where $\Pbb$ is the product spherical prior of the latent points.
\end{lemma}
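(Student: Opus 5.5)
Hölder with an exponent tuned to $\Pbb(E)$ decouples $D$ from $\one_E$, and hypercontractivity on the sphere then reduces the higher moment of $D$ to its second moment; the two cases of $\nu(D)$ differ only in which moment growth we use. Write $P:=\Pbb(E)\le 1/2$. For any $s\ge1$, Hölder gives
\[
\E(D^2\one_E)\le \bigl(\E|D|^{2s}\bigr)^{1/s}P^{1-1/s}.
\]
We will choose $s=\log(e/P)$, so that $P^{-1/s}=e\cdot P^{-1/s}/e\le e$, up to an absolute constant. Thus it suffices to show $\|D\|_{2s}^2\le C\,s^{\nu(D)}\E(D^2)$ for all $s\ge1$, with $C$ depending only on $2t(D)$.

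\textbf{General case, $\nu(D)=2t(D)$.} $D$ is a polynomial of degree $\ell_v$ in each $x_v$, with total degree $2t=\|\bsell\|_1$. The heat (Poisson) semigroup on $\mathbb S^{d-1}$ is hypercontractive with a dimension-free constant; its log-Sobolev constant is in fact better than the Gaussian one. Tensorizing over the product $(\mathbb S^{d-1})^{V}$ gives $\|f\|_q\le (q-1)^{\deg/2}\|f\|_2$ for polynomials of degree $\le \deg$ in the product harmonic decomposition. Here we use that $D$ lies in eigenspaces of the product Laplacian with eigenvalue at most a constant times total degree, so the standard argument $\|f\|_q\le e^{\lambda t_q}\|f\|_2$ applies with $t_q=\tfrac12\log(q-1)$ on the normalized generator. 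Applying this with $q=2s$ gives $\|D\|_{2s}^2\le (2s)^{2t}\E D^2$, which yields \eqref{eq:state-event-moment} with $\log^{2t}(e/P)$.

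\textbf{Special case, $D=c\ip{x_u}{x_v}$.} Here we need the sharper exponent $1$. This is not hypercontractivity but a direct computation. Conditional on $x_v$, $\ip{x_u}{x_v}$ is distributed as one coordinate of a uniform point on the sphere, which is sub-Gaussian with variance proxy $1/d$. Hence $\E|\ip{x_u}{x_v}|^{2s}\le (Cs/d)^s$. Since $\E\ip{x_u}{x_v}^2=1/d$, this gives $\|D\|_{2s}^2\le Cs\,\E D^2$, i.e.\ the exponent $\nu(D)=1$.

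The main obstacle is the general case: we must make sure the hypercontractive constant is independent of $d$ and depends only on the total degree. I would invoke the known spherical log-Sobolev inequality (constant $\sim 1/(d-1)$, so the eigenvalue $\ell(\ell+d-2)$ normalized by $d-1$ is $\le C\ell$) and tensorize over $v\in V$. This gives moment growth $(q-1)^{C t}$, and the exponent $\nu=2t$ requires the sharper normalized bound $\lambda\le\ell$ per factor. Any fixed power in the exponent would only change $C$ if slack were allowed, but the stated exponent $2t$ needs growth exactly $q^{t}$ in $\|D\|_q$. I would verify this via $\ell(\ell+d-2)/(d-2)\le \ell(1+\ell/(d-2))$, which is at most $\ell+O(1)$ for bounded $\ell$ and $d$ large. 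For small $d$, all quantities are bounded polynomials on a compact space and are absorbed into $C$.
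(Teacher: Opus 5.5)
Your H\"older step and your treatment of the special case $D=c\ip{x_u}{x_v}$ are fine; the sub-Gaussian moment bound is a valid substitute for the paper's one-coordinate hypercontractivity. The gap is in the general case: the heat semigroup on $\mathbb S^{d-1}$ does not give the exponent $t(D)$. The sharp log-Sobolev constant is $d-1$ and the eigenvalue on $\calH_\ell$ is $\ell(\ell+d-2)$. Tensorizing the semigroup argument over $v\in V$ therefore gives only
\[
\|D\|_q\le (q-1)^{\sum_{v}\ell_v(\ell_v+d-2)/(2(d-1))}\,\|D\|_2=(q-1)^{t(D)+\epsilon_d/2}\,\|D\|_2,
\qquad
\epsilon_d:=\sum_{v\in V}\frac{\ell_v(\ell_v-1)}{d-1},
\]
and $\epsilon_d>0$ whenever some $\ell_v\ge2$. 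Your own check (``at most $\ell+O(1)$'') concedes exactly this. After H\"older you obtain $\log^{2t(D)+\epsilon_d}(e/\Pbb(E))$. For each fixed $d$, the extra factor $\log^{\epsilon_d}(e/\Pbb(E))$ is unbounded as $\Pbb(E)\to0$, so no constant depending only on $2t(D)$ results. Your remark that small $d$ can be absorbed into $C$ does not address this, because the loss occurs as $\Pbb(E)\to0$ at every $d$.

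The paper gets the exact exponent by Gaussian lifting. Write $x_v=g_v/r_v$ with $g_v\sim N(0,I_d)$ and $r_v=\|g_v\|_2$. Then $\prod_v r_v^{\ell_v}D(x_V)$ is a polynomial of degree $2t(D)$ in the Gaussian vectors, so Gaussian hypercontractivity gives the factor $(q-1)^{t(D)}$ with no dimension dependence. Independence of $x_V$ from the radii, together with $\|r_v^{\ell_v}\|_2\le\|r_v^{\ell_v}\|_q$, transfers this bound to $D$.

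Alternatively, you can repair your own route with the sup-norm bound $\|D\|_\infty^2\le\prod_v N_{\ell_v}\,\E(D^2)\le C d^{2t(D)}\E(D^2)$, which follows from Cauchy--Schwarz on the coefficient tensor and $\|Y_\ell(x)\|^2=N_\ell$. This handles the regime $\log(e/\Pbb(E))\ge d$ directly, since then $d^{2t(D)}\le\log^{2t(D)}(e/\Pbb(E))$. In the regime $\log(e/\Pbb(E))\le d$, the excess factor is at most $(2d)^{\epsilon_d}\le(2d)^{4t(D)^2/(d-1)}$, which is bounded in terms of $t(D)$ alone.
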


\begin{proof}
We first claim that,
for every $q\ge2$, 
\begin{equation}
 \|D\|_q\le(q-1)^{t(D)} \|D\|_2\,.
 \label{eq:state-hypercontractivity}
\end{equation}
For each $v\in V$, let $g_v\sim N(0,I_d)$ be an independent standard gaussian vector and write $x_v=g_v/r_v$ with $r_v=\|g_v\|_2.$
The random vectors $x_v$ are independent and uniform on
$\mathbb S^{d-1}$, and $(x_v:v\in V)$ is independent of
$(r_v:v\in V)$.
The homogeneous extension 
\[
\widetilde D((g_v)_{v\in V})
=\prod_{v\in V} r_v^{\ell_v} \cdot D((x_v)_{v\in V})
\]
is a gaussian polynomial of total degree $2 t(D)$: in particular, 
$r_v^{\ell_v}Y_{\ell_v}(x_v)$ is a homogeneous harmonic polynomial
of degree $\ell_v$ in $g_v$. 
The independence between $x_V$ and $r_V$ gives 
$$
 \|\widetilde D\|_q
 =\|D\|_q \prod_{v\in V} \| r_v^{\ell_v}\|_q
$$
Moreover, gaussian
hypercontractivity \cite[Theorem~5.10 and Remark~5.11]{janson1997gaussian} gives
\[
 \|\widetilde D\|_q
 \le(q-1)^{t(D)}\|\widetilde D\|_2.
\]
Combining the last two displayed equations yields
\[
 \|D\|_q 
 \le (q-1)^{t(D)}\|D\|_2
 \prod_{v\in V}
 \frac{\|r_v^{\ell_v}\|_2}{\|r_v^{\ell_v}\|_q}
 \le (q-1)^{t(D)}\|D\|_2,
\]
where the last inequality holds
due to
$\|r_v^{\ell_v}\|_2\le\|r_v^{\ell_v}\|_q$ by the  monotonicity of $L^q$ norms.  This proves \eqref{eq:state-hypercontractivity}.

Suppose first that $D$ is not a scalar multiple of $\iprod{x_u}{x_v}$. 
For $\eta:=\Pbb(E)\le1/2$, H\"older's inequality with
$q=2s=2\log(e/\eta)$ therefore gives
\[
 \E(D^2\one_E)
 \le \|D\|_{2s}^2 \eta^{1-1/s}
 \overset{\eqref{eq:state-hypercontractivity}}{\le} (2s-1)^{2t(D)}\E(D^2)\eta^{1-1/s}\le C \E(D^2) \eta \log^{2t(D)}(e/ \eta)
 \,,\]
where $\eta^{-1/s}\le e$. This gives
\eqref{eq:state-event-moment} in the case $ \nu(D)\ge2$.

For the case that $D$ is a scalar multiple of $\iprod{x_u}{x_v}$, we can assume without loss that $D=\langle x_u,x_v\rangle$. 
By rotational invariance, $\langle x_u,x_v\rangle$ has the same distribution as $\langle x_u,e_1\rangle$, a degree-one spherical harmonic. Therefore, in this case, we have the sharper estimate
	\[\|D\|_q\le { (q-1)^{1/2}}\,\|D\|_2\,,\]
by applying~\prettyref{eq:state-hypercontractivity} in one coordinate. Proceeding similarly as before, again with $q=2s=2\log(e/\eta)$, gives
	\[
	\E(D^2\one_E)
	\le \|D\|_{2s}^2 \eta^{1-1/s}
	\le (q-1)\E(D^2) \eta^{1-1/s}
	\le C \E(D^2) \eta \log(e/\eta)\,,
	\]
again using $\eta^{-1/s}\le e$. This gives \eqref{eq:state-event-moment} in the case $\nu(D)=1$, and concludes the proof.
\end{proof}


\subsection{Step 1: dropping internal edges}

Recall $D\equiv D(x_V) \in \calH_{\bsell}$ as in \eqref{eq:pure-block}. To carry out the cavity calculation, we mark an arbitrary vertex $u \in V$ and aim to average out $x_u$. 
Set
\begin{equation}\label{e:def.I.J}
 I=V^c=[n]\setminus V,\qquad
 J= [n]\setminus\{u\}.
\end{equation}
Let $\bG_{-u}=(A_{ij}:i<j, i\neq u, j \neq u)$ be the subgraph induced by all vertices except $u$. (This is an RGG with $n-1$ vertices.)

Denote by
$\bY_I=(A_{ui}: i\in V^c)$ the edges between $u$ and vertices outside $V$, and
$\bZ:=(A_{uv}: v\in V\setminus \{u\})$ the edges between $u$ and other vertices in $V$.
We refer to $\bZ$ and $\bY_I$ the \textit{internal edges} and \textit{external edges}, respectively.
The full graph $G_n$ consists of $\bG_{-u}$, $\bY_I$ and $\bZ$.
This decomposition is illustrated in \prettyref{fig:cavity-vertex-decomposition}.

\begin{figure}[!htb]
\centering
\begingroup
\definecolor{ink}{HTML}{243247}
\definecolor{muted}{HTML}{596778}
\definecolor{support}{HTML}{7562A4}
\definecolor{external}{HTML}{2375AB}
\definecolor{dropped}{HTML}{C45549}
\definecolor{backgroundedge}{HTML}{A0A9B5}
\resizebox{0.6\linewidth}{!}{%
\begin{tikzpicture}[
  x=1cm,y=1cm,
  font=\sffamily\color{ink},
  vertex/.style={circle,draw=ink,fill=white,line width=.8pt,
    minimum size=6.5mm,inner sep=0pt,font=\small},
  internal/.style={draw=dropped,line width=1.6pt,dash pattern=on 5pt off 3pt},
  externaledge/.style={draw=external,line width=1.35pt},
  backgroundedge/.style={draw=backgroundedge,line width=.85pt},
  every node/.style={outer sep=0pt}
]

\fill[black!2,rounded corners=11pt] (.20,-1.75) rectangle (11.90,5.70);
\fill[support!7,rounded corners=11pt] (.48,-.90) rectangle (5.05,7.20);
\draw[support!70,line width=1pt,rounded corners=11pt]
  (.48,-.90) rectangle (5.05,7.20);
\draw[external!50,fill=external!5,line width=.9pt,rotate around={-15:(9.25,2.40)}]
  (9.25,2.40) ellipse[x radius=2.15,y radius=3.10];
\draw[muted!70,line width=.9pt,dash pattern=on 2.5pt off 2.5pt,rounded corners=11pt]
  (.20,-1.75) rectangle (11.90,5.70);

\node[anchor=west,text=support,font=\large] at (.82,6.82) {\(V\)};
\node[font=\normalsize,text=support] at (2.80,-.43) {\(V\setminus\{u\}\)};
\node[font=\large,text=external] at (10.20,4.55) {\(I=V^c\)};

\coordinate (u) at (2.80,6.43);
\coordinate (v1) at (1.30,1.63);
\coordinate (v2) at (2.80,.43);
\coordinate (v3) at (4.30,1.63);
\coordinate (i1) at (9.10,4.60);
\coordinate (i2) at (10.70,2.80);
\coordinate (i3) at (8.70,2.40);
\coordinate (i4) at (9.70,.30);
\coordinate (i5) at (7.85,.30);

\draw[backgroundedge] (v1)--(v2)--(v3)--(v1);
\draw[backgroundedge] (v3)--(i5);
\draw[backgroundedge] (v3)--(i3);
\draw[backgroundedge] (v2) .. controls (4.90,-1.20) and (7.90,-1.30) .. (i4);
\draw[backgroundedge] (i1)--(i2)--(i3)--(i1);
\draw[backgroundedge] (i2)--(i4)--(i5)--(i3)--(i4);

\foreach \v in {v1,v2,v3} \draw[internal] (u)--(\v);
\foreach \i in {i1,i2,i3,i4,i5} \draw[externaledge] (u)--(\i);

\node[vertex,fill=support!12,draw=support,line width=1.2pt,
  minimum size=9mm,font=\large] at (u) {\(u\)};
\foreach \v/\label in {v1/v_1,v2/v_2,v3/v_3}
  \node[vertex,draw=support] at (\v) {\(\label\)};
\foreach \i/\label in {i1/i_1,i2/i_2,i3/i_3,i4/i_4,i5/i_5}
  \node[vertex,draw=external] at (\i) {\(\label\)};

\node[text=dropped,font=\large] at (2.34,3.65) {\(\mathbf Z\)};
\node[text=external,font=\large] at (6.65,4.30) {\(\mathbf Y_I\)};
\node[text=muted,font=\large,fill=black!2,inner sep=4pt]
  at (5.20,-1.75) {\(\mathbf G_{-u}\)};
\end{tikzpicture}
}
\endgroup
\caption{Decomposition of $G_n$ into 
$\bZ$ (internal edges, dashed red), 
$\bY_I$ (external edges, solid blue), and $\bG_{-u}$.}
\label{fig:cavity-vertex-decomposition}
\end{figure}

The following lemma shows that the posterior moment of $D$ is nearly unchanged if we drop the conditioning on the internal edges $\bZ$:
\begin{lemma}[dropping internal edges]
\label{lem:restoration}
Assume $|V|=O(1)$. Then there exists a constant $C$, depending only on $|V|$ and $t(D)$, such that
\begin{equation}
 \Post(D)
 \le Cg(D)+4\E\bigg[\Big(\E[D\mid\bG_{-u},\bY_I]\Big)^2\bigg]\,,
 \label{eq:restoration-roadmap}
\end{equation}
with $\Post$ as defined by \eqref{eq:post},
and $g(D)$ the target scale as defined by \eqref{eq:gD}.
\end{lemma}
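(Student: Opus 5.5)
The plan is to split on the internal edges $\bZ$, which form a vector of $|V|-1=O(1)$ bits. Write $\calH:=\sigma(\bG_{-u},\bY_I)$, so that $G_n=(\bG_{-u},\bY_I,\bZ)$. The key observations are:
\begin{itemize}
\item the event $\{\bZ\neq0\}$ is rare, with $\Pbb(\bZ\ne0)\le(|V|-1)p$ by a union bound;
\item on $\{\bZ=0\}$ the conditioning on $\bZ$ costs only a bounded factor, provided the conditional probability $\Pbb(\bZ=0\mid\calH)$ is not small.
\end{itemize}
Whenever we are forced onto a rare event $E\in\sigma(G_n)$, we bound the contribution crudely by $\E[(\E[D\mid G_n])^2\one_E]\le\E[D^2\one_E]$. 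This follows from conditional Jensen, since $\one_E$ is $G_n$-measurable. We then apply Lemma~\ref{lem:uniform-localization}. Because $\eta\mapsto\eta\log^{\nu(D)}(e/\eta)$ is increasing on $(0,1/2]$, and $\Pbb(E)\le Cp$ with $\log(e/(Cp))\le C'L$, each such contribution is at most $C\E(D^2)pL^{\nu(D)}=Cg(D)$.

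Concretely, I would introduce the $\calH$-measurable event $F:=\{\Pbb(\bZ\neq0\mid\calH)>1/2\}$. By Markov, $\Pbb(F)\le2\Pbb(\bZ\ne0)\le2|V|p$, which is at most $1/2$ for large $n$. I then split $\E[(\E[D\mid G_n])^2]$ over the three events
\[
\{\bZ\ne0\},\qquad \{\bZ=0\}\cap F,\qquad \{\bZ=0\}\cap F^c.
\]
The first two are $\sigma(G_n)$-measurable, have probability $O(p)$, and so contribute $Cg(D)$ by the crude bound above.

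For the third event, Bayes' rule gives, on $\{\bZ=0\}$,
\[
\E[D\mid G_n]=\frac{\E[D\one\{\bZ=0\}\mid\calH]}{\Pbb(\bZ=0\mid\calH)}.
\]
Taking expectations over $\bZ$ given $\calH$, the contribution becomes
\[
\E\Big[\one_{F^c}\,\frac{(\E[D\one\{\bZ=0\}\mid\calH])^2}{\Pbb(\bZ=0\mid\calH)}\Big]\le2\,\E\big[(\E[D\one\{\bZ=0\}\mid\calH])^2\big],
\]
where the inequality uses $\Pbb(\bZ=0\mid\calH)\ge1/2$ on $F^c$. Next write $D\one\{\bZ=0\}=D-D\one\{\bZ\ne0\}$ and use $(a-b)^2\le2a^2+2b^2$. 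This bounds the right-hand side by
\[
4\,\E\big[(\E[D\mid\calH])^2\big]+4\,\E\big[D^2\one\{\bZ\ne0\}\big],
\]
using Jensen for the second term. That second term is again $Cg(D)$ by Lemma~\ref{lem:uniform-localization}. Summing the three pieces yields \eqref{eq:restoration-roadmap} with the constant $4$.

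The main obstacle is the denominator $\Pbb(\bZ=0\mid\calH)$. A naive Bayes expansion would require it to be bounded below everywhere, and that can fail. The event $F$ is designed exactly to isolate the bad set, and Markov's inequality shows that this set has probability $O(p)$. Beyond that, the only care needed is in checking that each rare event to which we apply Lemma~\ref{lem:uniform-localization} has probability both $\le1/2$ and $O(p)$, so that the logarithmic factor is $O(L^{\nu(D)})$. The constant then depends only on $|V|$ and $t(D)$.
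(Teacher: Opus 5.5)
Your proposal is correct and follows the paper's proof essentially step for step. Your event $F$ is the paper's $E_{\rm int}$, and on $F^c$ you use $\Pbb(\bZ=0\mid\calH)\ge1/2$ to drop the denominator and then split $\E[D\one\{\bZ=0\}\mid\calH]=\E[D\mid\calH]-\E[D\one\{\bZ\ne0\}\mid\calH]$, reaching essentially the same estimate $4\E[(\E[D\mid\calH])^2]+5\E[D^2\one\{\bZ\ne0\}]+\E[D^2\one_F]$ before applying Lemma~\ref{lem:uniform-localization}. (One harmless slip: for $\nu\ge2$, $\eta\mapsto\eta\log^{\nu}(e/\eta)$ is increasing only on $(0,e^{1-\nu}]$, not on all of $(0,1/2]$; since your rare events have probability at most $Cp\to0$, you only need monotonicity near $0$.)
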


\begin{proof} 
Let $E_{\bZ}=\{\bZ\ne0\}$. It follows by a union bound that  
	\[\Pbb(E_{\bZ})\le (|V|-1)p
	\le O(p)
	\,.\]   Let $\mathcal{F}^-=\sigma(\bG_{-u},\bY_I)$, and abbreviate $P_z := \Pbb(\bZ=z\,|\,\mathcal{F}^-)$, 
	$A_z := \E(D\,|\,\mathcal{F}^-,\bZ=z)$. Then
	\begin{equation}\label{e:D.condmean.Fminus}
	\E(D\,|\,\mathcal{F}^-)
	= \sum_z P_z A_z
	\equiv P_0 A_0 + 
	\E(D \one_{E_\bZ} \,|\,\mathcal{F}^-)\,.
	\end{equation}
Define the event
	\[E_{\rm int}
	:= \bigg\{
	1-P_0 =
	\Pbb(E_{\bZ} \,|\, \mathcal{F}^-)
	> \frac12
	\bigg\}\,;\]
it follows by Markov's inequality that   $\Pbb(E_{\rm int}) \le 2\Pbb(E_{\bZ}) \le O(p)$.   Recalling \eqref{eq:post}, we can bound the contribution to $\Post(D)$ from outside $E_{\rm int}$ by
	\begin{align*}
	&\E\Big[
	\one_{(E_{\rm int})^c}
	\E(D\,|\,\mathcal{F}^-, \bZ)^2
	\Big]
	= \E\bigg[
	\one_{(E_{\rm int})^c}
	\sum_z P_z (A_z)^2
	\bigg] \\
	&\qquad\le 2
	\E\Big[
	\one_{(E_{\rm int})^c}
	( P_0 A_0)^2
	\Big]
	+ \E\Big[
	\one_{E_\bZ}
	\E(D\,|\, \mathcal{F}^-, \bZ)^2
	\Big]\\
	&\qquad
	\stackrel{\eqref{e:D.condmean.Fminus}}{\le}
	4\E\bigg[
	\E(D\,|\,\mathcal{F}^-)^2
	+  \E(D\one_{E_\bZ} \,|\,\mathcal{F}^-)^2
	\bigg]
	+ \E\bigg[
	\one_{E_\bZ}
	\E(D\,|\, \mathcal{F}^-, \bZ)^2
	\bigg]\\
	&\qquad\le 
	4 \E[\E(D\,|\,\mathcal{F}^-)^2]
	+ 5 \E(D^2 \one_{E_{\bZ}})\,.
	\end{align*}
In the above, the first bound uses that $P_0 \ge 1/2$ outside $E_{\rm int}$, and the last bound uses the conditional Jensen's inequality. Combining with the contribution to $\Post(D)$ from $E_{\rm int}$ gives
	\[
	\Post(D)
	= \E\Big[
	\E(D\,|\,\mathcal{F}^-, \bZ)^2
	\Big]
	\le 4 \E\Big[ 
	\E(D\,|\,\mathcal{F}^-)^2
	\Big]
	+ 5\E(D^2 \one_{E_\bZ} )
	+ \E(D^2 \one_{E_{\rm int}} )\,,
	\]
again using the conditional Jensen's inequality. Applying \eqref{eq:state-event-moment} to the last two terms gives
$Cg(D)$, and the claim follows.
\end{proof}

That is to say, after incurring an error on the order of the target scale  $g(D)$, the internal edges $\bZ$ no longer appear in the conditioning.

\subsection{Step 2: Bayes rule with respect to external edges}

  Recall $D\equiv D(x_V) \in \calH_{\bsell}$ as in \eqref{eq:pure-block}.
Recall also from \eqref{e:def.I.J} that we mark a vertex $u\in V$, let $\bG_{-u}$ be the subgraph induced by all vertices other than $u$, and let $\bY_I$ be the external edges between $u$ and $I=[n]\setminus V$.   \prettyref{lem:restoration} reduces the
main problem of controlling $\Post(D)$ to the problem of controlling the second moment of the posterior mean $\E(D\mid\bG_{-u},\bY_I)$.
The plan next is to
(a) use the Bayes rule to express this posterior mean as a ratio (see \eqref{eq:Bayes_ratio} below), 
(b) lower bound the denominator, then 
(c) bound the numerator by Fourier expansion. Similarly as before, we work with the reference product Bernoulli measure $Q_I=\operatorname{Bern}(p)^{\otimes I}$.  Conditional on $\bG_{-u}$, let
$\Lambda_{u,I}$ be the likelihood ratio of $\bY_I$ relative to $Q_I$,
\[
\Lambda_{u,I}(y) := \frac{\Prob(\bY_I = y
    \mid \bG_{-u})}{Q_I(y)}\,,
\]
for $y \in \{0,1\}^I$.
Define similarly the $D$-weighted quantity
\begin{equation}
 r_{u,I,D}(y)
 :=\frac{\Expect(
 D(x_V) \indc{\bY_I = y} \mid \bG_{-u}
 )}{Q_I(y)}\,.
 \label{eq:rI}
\end{equation}
On the event $\{\Lambda_{u,I}>0\}$, it follows by the Bayes rule that
\begin{align}
\Expect(D \mid \bY_I = y, \bG_{-u})
 =\frac{
 r_{u,I,D}(y)}{\Lambda_{u,I}(y)}. \label{eq:Bayes_ratio}
\end{align}

\begin{lemma}[Bayes rule for external edges]
\label{lem:puncturing}
The second term of
\eqref{eq:restoration-roadmap} can be re-expressed as
\begin{equation}
 \E\left[\left(\E[D\mid\bG_{-u},\bY_I]\right)^2\right]
 =\E\bigg[\int\frac{r_{u,I,D}(y)^2}{\Lambda_{u,I}(y)}\,dQ_I(y)\bigg]\,,
 \label{eq:puncturing-roadmap}
\end{equation}
with the   integrand   
interpreted as zero when $\Lambda_{u,I}=0$.
\end{lemma}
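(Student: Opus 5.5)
The plan is to condition first on $\bG_{-u}$ and then compute the inner conditional expectation over $\bY_I$ directly from the Bayes ratio \eqref{eq:Bayes_ratio}. By the tower property,
\[
\E\left[\left(\E[D\mid\bG_{-u},\bY_I]\right)^2\right]
=\E\left[\E\Big[\left(\E[D\mid\bG_{-u},\bY_I]\right)^2\,\Big|\,\bG_{-u}\Big]\right].
\]
Conditionally on $\bG_{-u}$, the vector $\bY_I$ takes finitely many values $y\in\{0,1\}^I$. Its conditional law is $\Pbb(\bY_I=y\mid\bG_{-u})=\Lambda_{u,I}(y)Q_I(y)$, by definition of $\Lambda_{u,I}$. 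The inner expectation is therefore the finite sum
\[
\sum_{y:\,\Lambda_{u,I}(y)>0}\Lambda_{u,I}(y)Q_I(y)\,\Big(\E[D\mid\bY_I=y,\bG_{-u}]\Big)^2 .
\]
Values $y$ with $\Lambda_{u,I}(y)=0$ have zero conditional probability, so they contribute nothing and may be omitted.

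On $\{\Lambda_{u,I}(y)>0\}$ we substitute the ratio \eqref{eq:Bayes_ratio}, $\E[D\mid\bY_I=y,\bG_{-u}]=r_{u,I,D}(y)/\Lambda_{u,I}(y)$. This identity is simply the elementary conditional-expectation formula $\E[D\one\{\bY_I=y\}\mid\bG_{-u}]/\Pbb(\bY_I=y\mid\bG_{-u})$, with numerator and denominator both divided by $Q_I(y)>0$. Each summand then becomes $Q_I(y)\,r_{u,I,D}(y)^2/\Lambda_{u,I}(y)$, so the sum equals $\int r_{u,I,D}^2/\Lambda_{u,I}\,dQ_I$, with the integrand set to zero where $\Lambda_{u,I}=0$. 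Taking the outer expectation over $\bG_{-u}$ gives \eqref{eq:puncturing-roadmap}.

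The only point needing care is the convention on $\{\Lambda_{u,I}=0\}$. There $\Pbb(\bY_I=y\mid\bG_{-u})=0$, which forces $\E[D\one\{\bY_I=y\}\mid\bG_{-u}]=0$ as well, so $r_{u,I,D}(y)=0$ almost surely. The zero convention is therefore consistent, and dropping these $y$ loses nothing. Integrability is not an issue: $D$ is a polynomial, hence bounded on the compact product of spheres, and all sums are finite. I do not expect any genuine obstacle; the lemma is a direct rewriting via the Bayes rule.
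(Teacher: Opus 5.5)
Your proposal is correct and follows essentially the same route as the paper: condition on $\bG_{-u}$, use that the conditional law of $\bY_I$ has density $\Lambda_{u,I}$ with respect to $Q_I$, substitute the Bayes ratio \eqref{eq:Bayes_ratio}, and then average over $\bG_{-u}$. Your extra remark that $r_{u,I,D}(y)=0$ wherever $\Lambda_{u,I}(y)=0$ justifies the zero convention, which the paper leaves implicit.
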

\begin{proof}
By~\prettyref{eq:Bayes_ratio}, conditionally on $\mathbf{G}_{-u}$, 
\begin{align*}
 \int\frac{r_{u,I,D}(y)^2}{\Lambda_{u,I}(y)}\,dQ_I(y)
 &=\int \left(\E[D\mid\bG_{-u},\bY_I=y]\right)^2
       \Lambda_{u,I}(y)\,dQ_I(y)\\
 &=\E\left[\left(\E[D\mid\bG_{-u},\bY_I]\right)^2\mid\bG_{-u}\right].
\end{align*}
Averaging over $\bG_{-u}$ proves \eqref{eq:puncturing-roadmap}.
\end{proof}

\subsection{Step 3: degree truncation}

   Lemmas
\ref{lem:restoration} and \ref{lem:puncturing} reduce the problem of controlling $\Post(D)$ to the problem of controlling the integral \eqref{eq:puncturing-roadmap}. To this end, introduce the local truncation 
\begin{equation}
 T_I
 =\bigg\{y_I\in\{0,1\}^I:
   \sum_{i\in I}y_i\le
   \min\Big\{
   |I|,\lceil2|I|p+100\log n\rceil
   \Big\}
   \bigg\}.
 \label{eq:local-row-truncation}
\end{equation}
Note that this truncation involves only the external edges $\bY_I$.

We first control the contribution to \eqref{eq:puncturing-roadmap} from outside the set $T_I$ defined by \eqref{eq:local-row-truncation}:

\begin{lemma}[degree truncation]
\label{lem:discarded-row}
There exists a constant $C$ depending only on the total degree $2 t(D)$ such that 
\begin{equation}
 \E\bigg[\int_{(T_I)^c}
       \frac{r_{u,I,D}^2}{\Lambda_{u,I}}\,dQ_I\bigg]
       \le C n^{-20} \E(D^2)\log^{\nu(D)}(en) =o(g(D))\,,
 \label{eq:discarded-row}
\end{equation}
for $g(D)$ as defined by \eqref{eq:gD}.
\end{lemma}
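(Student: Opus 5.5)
By Lemma~\ref{lem:puncturing}, applied pointwise rather than after integrating, the integrand is
\[
\frac{r_{u,I,D}(y)^2}{\Lambda_{u,I}(y)}\,Q_I(y)=\bigl(\E[D\mid\bG_{-u},\bY_I=y]\bigr)^2\,\Pbb(\bY_I=y\mid\bG_{-u}).
\]
Integrating over $(T_I)^c$ and averaging over $\bG_{-u}$ therefore gives
\[
\E\bigg[\int_{(T_I)^c}\frac{r_{u,I,D}^2}{\Lambda_{u,I}}\,dQ_I\bigg]
=\E\Big[\one_{E}\,\bigl(\E[D\mid\bG_{-u},\bY_I]\bigr)^2\Big],
\qquad E:=\{\bY_I\notin T_I\}.
\]
The event $E$ is measurable with respect to $\sigma(\bG_{-u},\bY_I)$, so $\one_E$ can be moved inside the conditional expectation. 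The conditional Jensen inequality then bounds the right-hand side by $\E(D^2\one_E)$. This step needs no control on the denominator, since outside $T_I$ we only use the Bayes identity.

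Next I bound $\Pbb(E)$. The vector $\bY_I$ consists of the indicators $\one\{\ip{x_u}{x_i}\ge\tau\}$ for $i\in I$. Conditional on $x_u$ these are i.i.d.\ $\Bern(p)$, because the $x_i$ are independent uniform points. Hence $\sum_{i\in I}y_i\sim\operatorname{Bin}(|I|,p)$ exactly. If the cap $|I|$ is the active constraint, then $E=\varnothing$ and there is nothing to prove. Otherwise, Bernstein's inequality with threshold $2|I|p+100\log n$ gives $\Pbb(E)\le n^{-30}$, using the same computation as in the proof of Theorem~\ref{thm:KL-reduction}. If $\Pbb(E)=0$ we are done; otherwise $0<\Pbb(E)\le 1/2$.

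We now apply Lemma~\ref{lem:uniform-localization} to the event $E\in\sigma(x_1,\ldots,x_n)$, which gives
\[
\E(D^2\one_E)\le C\,\E(D^2)\,\Pbb(E)\log^{\nu(D)}\bigl(e/\Pbb(E)\bigr).
\]
The function $\eta\mapsto\eta\log^{\nu}(e/\eta)$ is increasing on $(0,1]$, so we may substitute $\eta=n^{-30}$. This yields $C n^{-30}(30\log(en))^{\nu(D)}\le C' n^{-20}\E(D^2)\log^{\nu(D)}(en)$, with a constant depending only on $\nu(D)\le 2t(D)$. This proves the first inequality.

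For the final claim, compare with $g(D)=\E(D^2)pL^{\nu(D)}$. We have $p\ge n^{-1+\delta}\ge n^{-1}$ and $L\ge1$ for large $n$, while $n^{-20}\log^{\nu}(en)=o(n^{-1})$ because $\nu(D)$ is fixed. Hence the bound is $o(g(D))$.

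The only point needing care is the monotonicity substitution when $\Pbb(E)$ is smaller than $n^{-30}$; this is handled by the monotonicity of $\eta\log^{\nu}(e/\eta)$ for $\eta\le1$. I do not expect a genuine obstacle here; this is the routine ``throw away atypical degrees'' step.
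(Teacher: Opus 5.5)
Your proposal is correct and follows the paper's proof essentially step for step. The Bayes identity plus conditional Jensen reduces the quantity to $\E(D^2\one_E)$; conditional independence given $x_u$ plus Bernstein bounds $\Pbb(E)$ by a large negative power of $n$; and Lemma~\ref{lem:uniform-localization} finishes.

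One small slip: for $\nu\ge2$ the map $\eta\mapsto\eta\log^{\nu}(e/\eta)$ is increasing only on $(0,e^{1-\nu}]$, not on all of $(0,1]$. You only need monotonicity on $(0,n^{-30}]$, which lies inside that range for large $n$, so the conclusion is unaffected. Alternatively, use $\eta\log^{\nu}(e/\eta)\le\nu^{\nu}\,\eta_0\log^{\nu}(e/\eta_0)$ for all $0<\eta\le\eta_0\le1$.
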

\begin{proof}
In this proof, abbreviate $E\equiv \{\bY_I\notin T_I\}$. By \eqref{eq:Bayes_ratio} and Jensen's inequality,
\begin{align}
 \E\left[\int_{(T_I)^c}\frac{r_{u,I,D}^2}{\Lambda_{u,I}}\,dQ_I\right]
 =\E\left[\left(\E[D\mid\bG_{-u},\bY_I]\right)^2
             \one_E\right]\le
    \E(D^2\one_E)\,.
    \label{eq:bayes_trunction}
\end{align}
Under the product spherical prior of the latent points, conditional on $x_u$, the vectors
$(x_i)_{i\in I}$ are independent and uniform, so the row indicators
$(A_{ui})_{i\in I}$ are independent $\operatorname{Bern}(p)$ random variables.  Bernstein's
inequality and the definition of $T_I$ therefore give
\[\Pbb(E)=
 \Pbb(\bY_I\notin T_I)\le \frac{1}{n^{20}}\,.
\]
Lemma~\ref{lem:uniform-localization} now yields
\[
 \E(D^2\one_E)
 \le C \E(D^2)
 \frac{\log^{\nu(D)}(en)}{n^{20}}
 =o(g(D))\,,
\]
where the last equality holds as $p\ge n^{-1+\delta}$ and $\nu(D)$ is fixed. 
\end{proof}


\subsection{Step 4: controlling the bad denominator}\label{subsec:bad-denominator}

  In view of \prettyref{lem:discarded-row}, it remains to control the contribution to \eqref{eq:puncturing-roadmap} from inside the set $T_I$ defined by \eqref{eq:local-row-truncation}. 
 
Recall from \eqref{e:def.I.J} that $J=[n]\setminus\{u\}$, and define
\begin{equation}
 T_{+,J}
 =\bigg\{y_J\in\{0,1\}^J:
   \sum_{i\in J}y_i\le
   \min\bigl\{|J|,\lceil2|J|p+100\log n\rceil\bigr\} +B_*\bigg\}\,,
 \label{eq:local-row-truncation_slack}
\end{equation}
where $B_*$ is a slack parameter.
If $B_* \ge |V|-1,$
then
\begin{equation}
 T_I\times\{0,1\}^{V \setminus \{u\}}\subseteq T_{+,J}.
 \label{eq:local-to-row-truncation}
\end{equation}
Marginalizing the likelihood $\Lambda_{u,J}$ over $Q_{V\setminus\{u\}}$ and applying Jensen's inequality gives
\begin{align}
 &\E\left[\int_{T_I}(\Lambda_{u,I}-1)^2\,dQ_I\right]
= \E\left[\int_{T_I}(\E_{Q_{V\setminus\{u\}}}(\Lambda_{u,J})-1)^2\,dQ_I\right] \nonumber \\
 &\qquad \le \E\left[\int_{T_I}(\Lambda_{u,J}-1)^2\,dQ_J\right] 
\le \E\left[\int_{T_{+,J}}(\Lambda_{u,J}-1)^2\,dQ_J\right]
 \le \Delta,
 \label{eq:averaged-punctured-comparison}
\end{align}
where the last inequality follows from exchangeability, which allows us to identify $J$ with $[n-1]$, together with the definition of $\Delta$ from \eqref{eq:Delta-m}.

Heuristically, \eqref{eq:averaged-punctured-comparison} suggests that $\Lambda_{u,I}$ is typically very close to $1$, since $T_I$ is a typical event and we expect $\Delta$ to be of order $o(1/n)$, although this has not been proved yet   (as noted in the statement of \prettyref{thm:KL-reduction}, this would imply the main result \prettyref{thm:main-detection}).   This then suggests that we may drop the denominator in \eqref{eq:puncturing-roadmap} and incur only a constant factor loss. This heuristic will be made rigorous through the self-bounding scheme below. 

On $T_I$, we divide according to whether $\Lambda_{u,I}<1/2$ or
$\Lambda_{u,I}\ge1/2$, and first deal with the case of $\Lambda_{u,I}<1/2$.   (From the heuristic discussion above, we expect this to be a rare event.)   For fixed $\nu$, define
    \begin{equation}\label{e:def.phi.nu}
    \phi_\nu(s)
    := \begin{cases}
    0 & s=0\,,\\
    s\log^\nu\!(e/s)
        & 0<s<1\,,\\
    1 & s\ge1\,.
    \end{cases}
    \end{equation}
The following controls the contribution to \eqref{eq:puncturing-roadmap} from $T_I\cap\{\Lambda_{u,I} <1/2\}$:

\begin{lemma}[bad denominator estimate]
\label{lem:denominator}
Assume $|V| \le B_*$, where the slack $B_*$ is defined in \eqref{eq:local-row-truncation_slack}. For every  nonzero $D \in \calH_{\bsell}$,
\begin{equation}
 \E\left[\int_{T_I\cap\{\Lambda_{u,I}<1/2\}}
       \frac{r_{u,I,D}^2}{\Lambda_{u,I}}\,dQ_I\right]
 \le C \Expect(D^2) \phi_{\nu(D)}(\Delta)\,,
 \label{eq:bad-denominator}
\end{equation}
where $C$ only depends on $t(D)$.
\end{lemma}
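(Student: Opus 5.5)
Following the proof of Lemma~\ref{lem:discarded-row}, we undo the Bayes ratio. Let $F:=\{\bY_I\in T_I,\ \Lambda_{u,I}(\bY_I)<1/2\}$; since $\Lambda_{u,I}$ is a function of $(\bG_{-u},\bY_I)$, the event $F$ is $\sigma(\bG_{-u},\bY_I)$-measurable. Exactly as in \eqref{eq:bayes_trunction}, \eqref{eq:Bayes_ratio} gives
\[
\E\left[\int_{T_I\cap\{\Lambda_{u,I}<1/2\}}\frac{r_{u,I,D}^2}{\Lambda_{u,I}}\,dQ_I\right]
=\E\Big[\big(\E[D\mid\bG_{-u},\bY_I]\big)^2\one_F\Big]\le \E(D^2\one_F),
\]
using conditional Jensen's inequality in the last step. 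Since $F\in\sigma(x_1,\ldots,x_n)$, it then suffices to bound $\Pbb(F)$ and to invoke Lemma~\ref{lem:uniform-localization}.

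To bound $\Pbb(F)$ we work under the true law of $\bY_I$ given $\bG_{-u}$, which has density $\Lambda_{u,I}$ with respect to $Q_I$. On $\{\Lambda_{u,I}<1/2\}$ we have $\Lambda_{u,I}<(\Lambda_{u,I}-1)^2\cdot 2$, since $(1-\Lambda)^2>1/4$ there while $\Lambda<1/2$. Hence
\[
\Pbb(F)=\E\left[\int_{T_I\cap\{\Lambda_{u,I}<1/2\}}\Lambda_{u,I}\,dQ_I\right]\le 2\,\E\left[\int_{T_I}(\Lambda_{u,I}-1)^2\,dQ_I\right]\le 2\Delta,
\]
where the last step is \eqref{eq:averaged-punctured-comparison}, which applies because $|V|\le B_*$ gives \eqref{eq:local-to-row-truncation}.

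We conclude by cases. If $\Pbb(F)=0$ the bound is trivial. If $0<\Pbb(F)\le 1/2$, Lemma~\ref{lem:uniform-localization} gives $\E(D^2\one_F)\le C\E(D^2)\,\phi_{\nu(D)}(\Pbb(F))$. Since $s\mapsto s\log^\nu(e/s)$ is increasing on $(0,1]$ (its derivative is $\log^{\nu-1}(e/s)(\log(e/s)-\nu)$), we would like to replace $\Pbb(F)$ by $2\Delta$. This monotonicity is the one technicality. In general $\phi_\nu(2s)\le C_\nu\phi_\nu(s)$, so a clean route is to note that $\phi_\nu$ is nondecreasing and doubling up to a constant on all of $[0,\infty)$, with the constant depending only on $\nu$. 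If instead $\Pbb(F)>1/2$, or $2\Delta$ is bounded below by a constant, we use the trivial bound $\E(D^2\one_F)\le\E(D^2)$. In that regime $\Delta\ge 1/4$, so $\phi_{\nu}(\Delta)$ is bounded below by a positive constant depending only on $\nu$, and the claimed inequality holds with a suitable $C$. Combining the cases yields \eqref{eq:bad-denominator} with $C$ depending only on $t(D)$, since $\nu(D)\le 2t(D)$.
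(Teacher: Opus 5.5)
You follow the paper's proof step for step: you undo the Bayes ratio, bound $\Pbb(F)\le 2\Delta$ via $\Lambda_{u,I}<2(\Lambda_{u,I}-1)^2$ on $\{\Lambda_{u,I}<1/2\}$ together with \eqref{eq:averaged-punctured-comparison}, apply Lemma~\ref{lem:uniform-localization}, and handle $\Pbb(F)>1/2$ separately. However, the monotonicity step you flag as the one technicality rests on a false claim. Your own derivative $\log^{\nu-1}(e/s)\,(\log(e/s)-\nu)$ is negative for $e^{1-\nu}<s<1$. So for $\nu\ge2$ the map $s\mapsto s\log^\nu(e/s)$ is \emph{not} increasing on $(0,1]$, and $\phi_\nu$ is \emph{not} nondecreasing on $[0,\infty)$; for instance $\phi_2(e^{-1})=4/e>1=\phi_2(1)$. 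This case is not vacuous: $\nu(D)=2t(D)\ge2$ for every $D$ other than multiples of $\langle x_u,x_v\rangle$.

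The repair is easy. $\phi_\nu$ is increasing on $[0,e^{1-\nu}]$, satisfies $\phi_\nu\ge1$ on $[e^{1-\nu},\infty)$, and has $\sup\phi_\nu=\phi_\nu(e^{1-\nu})=\nu^\nu e^{1-\nu}$. Hence $\phi_\nu(s)\le \nu^\nu e^{1-\nu}\,\phi_\nu(t)$ whenever $0\le s\le t$, i.e.\ $\phi_\nu$ is monotone up to a constant depending only on $\nu$. Your doubling bound $\phi_\nu(2s)\le 2\phi_\nu(s)$ is correct. Combining the two gives $\phi_\nu(\Pbb(F))\le 2\nu^\nu e^{1-\nu}\,\phi_\nu(\Delta)$, which is exactly the ``elementary fact'' the paper invokes at this point. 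With that substitution your proof is complete and coincides with the paper's.
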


\begin{proof}
Conditional on $\bG_{-u}$, denote
\[
 E_{\bG_{-u}}:=T_I\cap\bigg\{
    y\in\{0,1\}^I : \Lambda_{u,I}(y)<\frac12 \bigg\}\,,\]
  and write $E:= \{\bY_I \in E_{\bG_{-u}}\}$.  
Analogously to~\prettyref{eq:bayes_trunction}, we have
\begin{align*}
 \E\left[\int_{E_{\bG_{-u}}}
       \frac{r_{u,I,D}^2}{\Lambda_{u,I}}\,dQ_I\right]
 &=\E\left[\left(\E[D\mid\bG_{-u},\bY_I]\right)^2\one_E\right]
 \le\E[D^2\one_E].
\end{align*}
We next bound $\Prob(E)$, where $\Pbb$ denotes the product spherical prior of $(x_1,\ldots,x_n)$.
For $y\in E_{\bG_{-u}}$, we have $|\Lambda_{u,I}(y)-1|>1/2$, and hence
\[
 Q_I(E_{\bG_{-u}})\le
  4\int_{T_I}(\Lambda_{u,I}-1)^2\,dQ_I
  := 4 \delta_{\bG_{-u}}\,,
\]
from which it follows that
\begin{align*}
 \int_{E_{\bG_{-u}}}\Lambda_{u,I}\,dQ_I
 \le \frac12Q_I(E_{\bG_{-u}})
 \le 2\delta_{\bG_{-u}}. 
\end{align*}
Conditional on $\bG_{-u}$,
the law of $\bY_I$ has density $\Lambda_{u,I}$ with respect to $Q_I$, so
\begin{align}
 \Pbb(E)
 =\E\left[\Pbb(\bY_I\in E_{\bG_{-u}}\mid\bG_{-u})\right]=\E\left[\int_{E_{\bG_{-u}}}\Lambda_{u,I}\,dQ_I\right]
 \le2 \E(\delta_{\bG_{-u}}) \le 2\Delta,  \label{eq:bad-event-probability}
\end{align}
where the last inequality holds by
\eqref{eq:averaged-punctured-comparison}.
If $\Prob(E) \le1/2$, Lemma~\ref{lem:uniform-localization} and
\eqref{eq:bad-event-probability} give
\[
 \E(D^2\one_E)
 \le C_{t(D)}\E(D^2)\phi_{\nu(D)}(\Delta)\,.
\]
Here we used the elementary fact that, for fixed $\nu$ and $c$, whenever
$0<s\le ct$ and $s\le1/2$,
$
 s\log^\nu(e/s)\le C_{\nu,c}\phi_\nu(t).
$
If $\Prob(E)>1/2$, then \eqref{eq:bad-event-probability} implies
$\Delta>1/12$; and the conclusion follows by noting that
$\E(D^2\one_E)\le\E(D^2)$, and $\phi_\nu$ is bounded below by an absolute positive constant on
$[1/12,\infty)$.
\end{proof}

Recall from the statement of \prettyref{thm:KL-reduction} that we expect $\Delta=o(1/n)$. If this were the case, it would imply that the bound in \eqref{eq:bad-denominator} is of order 
\[O\bigg(\Expect(D^2)\cdot \frac{\log(n)^{\nu(D)}}{n}\bigg)=o(g(D))\,,\]
for $g(D)$ as defined by \eqref{eq:gD},
as $p \gg 1/n$. 
This heuristic suggests that the contribution
to \eqref{eq:puncturing-roadmap} from the bad denominator event is negligible. Again, this will be made rigorous through the self-bounding scheme. 


\subsection{Step 5: Fourier expansion and its high-frequency remainder} \label{sec:step-5-fourier-high-frequency}

  Recall again that 
 Lemmas
\ref{lem:restoration} and \ref{lem:puncturing} reduce the problem of controlling $\Post(D)$ to the problem of controlling the integral \eqref{eq:puncturing-roadmap}. \prettyref{lem:discarded-row} controls the contribution to \eqref{eq:puncturing-roadmap} from outside the set $T_I$ defined by \eqref{eq:local-row-truncation}. \prettyref{lem:denominator} controls the contribution from the ``bad denominator'' set $T_I\cap \{\Lambda_{u,I} <1/2\}$. It remains therefore to bound the contribution from
$T_I\cap \{\Lambda_{u,I} \ge 1/2\}$.   On this event, the denominator 
  in \eqref{eq:puncturing-roadmap}  
can be replaced by a constant. The numerator can be expanded as
\begin{equation}
 r_{u,I,D}(y_I)
 =\sum_{S\subseteq I}
 \widehat r_{u,I,D}(S)\psi_S(y_I),
 \quad y_I \in \{0,1\}^I\,,
 \label{eq:rI-fourierexp}
\end{equation}
where $(\psi_S)_S$ is the Fourier basis defined by \prettyref{eq:fourierbasis}.
Using \prettyref{eq:rI}, each Fourier coefficient is itself a posterior mean:
\begin{align}
&\widehat r_{u,I,D}(S)
=\Expect_{Q_I}\Big[r_{u,I,D}(\bY_I) \psi_S(\bY_S)\Big]
=\sum_{y_I\in\{0,1\}^I}
 \Expect\Big[D(x_V) \indc{\bY_I = y_I} \psi_S(y_S) \,\Big|\, \bG_{-u}\Big]\nonumber\\
 &\qquad
= \Expect\Big[
D(x_V) \psi_S(\bY_S)
\,\Big|\,\bG_{-u}\Big]
= \Expect\bigg[
D(x_u, x_{V\setminus u}) \prod_{j\in S} \kappa(\langle x_u,x_j \rangle)
\,\bigg|\, \bG_{-u}\bigg]
\,,\nonumber
\end{align}
where the last equality uses \eqref{e:def.kappa} and \eqref{eq:fourierbasis}.
We then average over the position $x_u$ of the marked vertex $u$, using its independence from the  graph $\bG_{-u}$: this gives
\begin{equation}
\widehat r_{u,I,D}(S)
= \Expect(\eta_{D,S}\mid\bG_{-u})
    \label{eq:rI-fouriercoeff}
\end{equation}
where, with $\Expect_{x_u}$ denoting the average over a \emph{uniformly} random $x_u\in\mathbb{S}^{d-1}$, we define
\begin{equation}
 \eta_{D,S}
\equiv \eta_{D,S}(x_{V\setminus u},x_S)
:=\E_{x_u}\bigg[D(x_u,x_{V\setminus u})\prod_{j\in S}
     \kappa(\ip{x_u}{x_j})\bigg]\,.
 \label{eq:eta-DS}
\end{equation}
  The following controls the contribution to \eqref{eq:puncturing-roadmap}  from $T_I \cap\{\Lambda_{u,I}\ge1/2\}$, assuming some results which are deferred to Section~\ref{sec:high-level-fourier}:

\begin{lemma}[Fourier truncation]
\label{lem:weighted-fourier}
For any fixed integer $K>2/\delta$, we have
\begin{align}
 &\E\left[\int_{T_I\cap\{\Lambda_{u,I}\ge1/2\}}
       \frac{r_{u,I,D}^2}{\Lambda_{u,I}}\,dQ_I\right] \le
 4 \sum_{1\le |S|<K}
   \E\left[\left|\E[\eta_{D,S}\mid\bG_{-u}]\right|^2\right]
 +o(g(D)).
 \label{eq:weighted-fourier}
\end{align}

\end{lemma}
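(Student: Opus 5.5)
On the event $\{\Lambda_{u,I}\ge 1/2\}$ we have $1/\Lambda_{u,I}\le 2$. The left side of \eqref{eq:weighted-fourier} is therefore at most $2\,\E\bigl[\int_{T_I} r_{u,I,D}^2\,dQ_I\bigr]$, and the denominator disappears. We then split the Fourier expansion \eqref{eq:rI-fourierexp} at frequency $K$, exactly as in the proof of Theorem~\ref{thm:Fourier-reduction}:
\[
r_{u,I,D}=U+V,\qquad U:=\sum_{|S|<K}\widehat r_{u,I,D}(S)\psi_S,\qquad V:=\sum_{|S|\ge K}\widehat r_{u,I,D}(S)\psi_S .
\]
The inequality $(U+V)^2\le 2U^2+2V^2$ then gives the constant $4$ in front of the low-frequency sum. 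For $U$ we drop the truncation $T_I$ and apply Parseval on $\{0,1\}^I$ under $Q_I$. By \eqref{eq:rI-fouriercoeff} this yields $\sum_{|S|<K}\E[\E[\eta_{D,S}\mid\bG_{-u}]^2]$.

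The $S=\varnothing$ term must be treated separately. Here $\eta_{D,\varnothing}=\E_{x_u}D=0$, because $\ell_u\ge1$ and spherical harmonics of positive degree have zero mean. This is why the sum in \eqref{eq:weighted-fourier} starts at $|S|=1$. Singletons do not vanish in general, since $D$ depends on $x_u$, so they are kept in the sum.

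It remains to show $\E\bigl[\int_{T_I}V^2\,dQ_I\bigr]=o(g(D))$. This is where the truncation $T_I$ is essential: without it, Parseval would give $\sum_{|S|\ge K}$ of the posterior second moments, and these need not be small when $d<n$. I would invoke the high-frequency machinery deferred to Section~\ref{sec:high-level-fourier}, namely Proposition~\ref{p:high-Fourier-tail} applied with the weight $D$, the index set $I$ (so $m=|I|$), $h=|V|-1$ and $\mathcal F=\sigma(\bG_{-u})$, and then Proposition~\ref{p:bounding-B}. This is the same pair of results used in the unweighted case $D\equiv1$, $h=0$, $\mathcal F=\sigma(G_{n'})$ in Theorem~\ref{thm:Fourier-reduction}.

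The mechanism I expect is the following. On $T_I$, at most $O(np+\log n)$ coordinates $y_i$ equal $1$. Grouping $V$ by the set $S\cap\{i:y_i=1\}$ turns $\int_{T_I}V^2\,dQ_I$ into conditional expectations of $D(x_V)$ times products of $\kappa$ factors, each carrying a factor $p^{1/2}$ or $(p/(1-p))^{1/2}$. Since $|S|\ge K$ with $K>2/\delta$ and $np\ge n^{\delta}$, the combinatorial count $\binom{|I|}{|S|}$ is beaten by the resulting powers of $p$, leaving a factor $n^{-c}$ with $c>1$. After Cauchy--Schwarz in the $D$ factor, the bound becomes $\E(D^2)\cdot n^{-c}\operatorname{polylog}(n)$. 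This is $o(\E(D^2)\,p\,L^{\nu(D)})=o(g(D))$ because $p\ge n^{-1+\delta}$.

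The main obstacle is exactly this high-frequency bound with a weight $D$, i.e.\ checking that the propositions of Section~\ref{sec:high-level-fourier} apply with $D\not\equiv1$. The hypercontractive control of Lemma~\ref{lem:uniform-localization} must absorb the weight while losing only polylogarithmic factors. If the propositions are not stated in that generality, I would redo their argument with $D$ carried along, using H\"older together with \eqref{eq:state-hypercontractivity} to separate $D$ from the indicator and $\kappa$ factors.
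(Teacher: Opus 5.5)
Your proposal is correct and follows the paper's proof essentially step for step. You bound $1/\Lambda_{u,I}\le 2$, split at frequency $K$ using $(U+V)^2\le 2U^2+2V^2$, and handle the low part by Parseval together with $\widehat r_{u,I,D}(\varnothing)=\E[D\mid\bG_{-u}]=0$. For the high part you invoke Propositions~\ref{p:high-Fourier-tail} and~\ref{p:bounding-B} with $r=r_I$ and $\mathcal F=\sigma(\bG_{-u})\subseteq\sigma(x_{V\setminus u},x_I)$.

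One small correction: the parameter $h$ in Proposition~\ref{p:high-Fourier-tail} is the harmonic degree of $D$ in $z=x_u$, so $h=\ell_u$, not $|V|-1$. This is harmless, since the bound $\E(D^2)B_{m,K,r}$ does not depend on $h$.

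Your fallback is also unnecessary. That proposition already treats the weight exactly, via the Funk--Hecke identity $\E[\eta_{D,S}^2]=\E[D^2]\,\E_\rho[P_{h,d}(\rho)\eta_2(\rho)^{|S|}]$ with $|P_{h,d}|\le 1$, so there is no polylogarithmic loss. Finally, your heuristic misplaces where the smallness comes from: it comes from $d\gg(npL)^3$, through terms like $(npL/\sqrt d)^K$, not from powers of $p$ beating $\binom{|I|}{|S|}$.
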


\begin{proof}
We argue similarly as in the proof of Theorem~\ref{thm:Fourier-reduction}.
Using the Fourier expansion \eqref{eq:rI-fourierexp}, we decompose $r_{u,I,D}$ as the sum of two terms,
\begin{align*}
r_{u,I,D}^{(<K)}
    &= \sum_{S\subseteq I,|S|<K}
    \widehat r_{u,I,D}(S)\psi_S(y_I)
    \,,\\
r_{u,I,D}^{(\ge K)}
    &=\sum_{S\subseteq I,|S|\ge K}
    \widehat r_{u,I,D}(S)\psi_S(y_I)\,.
\end{align*}
The first term is the ``low-frequency'' contribution, while the second term is the ``high-frequency'' contribution. Since $\Lambda_{u,I}\ge 1/2$ on the domain of integration, we have 
\begin{align}
\E\bigg[\int_{T_I\cap\{\Lambda_{u,I}\ge 1/2\}}
\frac{r_{u,I,D}^2}{\Lambda_{u,I}}\,dQ_I\bigg]
&\le
4\E\bigg[\int_{T_I}
\bigl(r_{u,I,D}^{(<K)}\bigr)^2\,dQ_I\bigg]
+
4\E\bigg[\int_{T_I}
\bigl(r_{u,I,D}^{(\ge K)}\bigr)^2\,dQ_I\bigg]\,.
\label{eq:weighted-fourier-split}
\end{align}
For the low-frequency contribution to \eqref{eq:weighted-fourier-split}, enlarge the domain of integration from $T_I$ to the full
cube $\{0,1\}^I$ and apply Parseval's identity. Since $D$ has positive spherical-harmonic degree
in $x_u$, its mean in $x_u$ vanishes for every fixed $x_{V\setminus\{u\}}$.
As $x_u$ is independent of $\bG_{-u}$, this gives
\[
\widehat r_{u,I,D}(\emptyset)
=
\E[D\mid\bG_{-u}]
=
0.
\]
Together with the Fourier coefficient identity
\eqref{eq:rI-fouriercoeff}, we therefore obtain
\begin{align}
\E\left[\int_{T_I}
\bigl(r_{u,I,D}^{(<K)}\bigr)^2\,dQ_I\right]
&\le
\E\left[\int
\bigl(r_{u,I,D}^{(<K)}\bigr)^2\,dQ_I\right] =
\sum_{1\le |S|<K}
\E\left[\left|
\E[\eta_{D,S}\mid\bG_{-u}]
\right|^2\right].
\label{eq:weighted-fourier-low}
\end{align}
It remains to control the high-frequency contribution to \eqref{eq:weighted-fourier-split}. Identify $I\equiv[m]$, and define
\[r \equiv 
r_I:=
\min\Big\{|I|,\lceil 2|I|p+100\log n\rceil\Big\}\,.
\]
  Then the set $T_I$, as defined by \eqref{eq:local-row-truncation}, is the same as the set $T_{m,r}$ defined by \eqref{e:def.set.T.m.r}.   We then apply Proposition~\ref{p:high-Fourier-tail} with
$z=x_u$,
$X_V=x_{V\setminus u}$, 
  $X_{[m]}=x_I$,  
and $\mathcal F=\sigma(\bG_{-u})$. Note that $\bG_{-u}$ 
is measurable with respect to the
latent configuration excluding $x_u$, so
    \[\mathcal F
    \subseteq \sigma(x_{V\setminus u},x_{[m]})
    \]
  as required by Proposition~\ref{p:high-Fourier-tail}.   Recalling again
\eqref{eq:rI-fouriercoeff}, Proposition~\ref{p:high-Fourier-tail} gives
\begin{align}
\E\bigg[\int_{T_I}
\bigl(r_{u,I,D}^{(\ge K)}\bigr)^2\,dQ_I\bigg]
\le
\E(D^2)\,B_{m,K,r}\,.
\label{eq:weighted-fourier-high}
\end{align}
Since $r_I$ satisfies the hypothesis of
Proposition~\ref{p:bounding-B},
we have $
B_{m,K,r}=o(1/n)$
for every fixed $K>2/\delta$.
Consequently,
\[
\E(D^2)\,B_{m,K,r}
=
o\bigg(\frac{\E(D^2)}{n}\bigg)
=
o\bigg(\E(D^2)pL^{\nu(D)}\bigg)
=o(g(D))\,,\]
where in the second equality we used
$npL^{\nu(D)}\to\infty$, which follows from
$np\ge n^\delta$ and $L\ge 1$ for all sufficiently large $n$.
Substituting \eqref{eq:weighted-fourier-low} and
\eqref{eq:weighted-fourier-high} into
\eqref{eq:weighted-fourier-split} gives
\[
\E\bigg[\int_{T_I\cap\{\Lambda_{u,I}\ge1/2\}}
\frac{r_{u,I,D}^2}{\Lambda_{u,I}}\,dQ_I\bigg]
\le
4\sum_{1\le |S|<K}
\E\Big[
\E(\eta_{D,S}\mid\bG_{-u})^2\Big]
+
o(g(D))\,,
\]
which concludes the proof.
\end{proof}

In light of Lemma~\ref{lem:weighted-fourier}, it remains to control $\mathbb{E}[\mathbb{E}(\eta_{D,S}\mid \mathbf{G}_{-u})^2]$ for $1\le |S|<K$. Moreover, by exchangeability, $\mathbb{E}\left[|\mathbb{E}[\eta_{D,S}\mid \mathbf{G}_{-u}]|^2\right]$ depends on $S$ only through its size $|S|$. Therefore, the problem reduces to controlling a bounded number of terms.


\subsection{Step 6: Harmonic expansion and its high-degree remainder}
\label{sec:cavity-harmonic}

To control the posterior mean of $\eta_{D,S}$, we further expand $\eta_{D,S}$ in harmonic polynomials. This expansion is central to our recursive self-bounding scheme, which relates harmonic polynomials of varying but uniformly bounded degrees. The resulting bounds involve only the corresponding posterior means and $\Delta$, leading to a system of self-bounding inequalities.

To prepare for this, we collect some harmonic polynomial preliminaries. 
Recall that $\calH_\ell\equiv \calH_{\ell,d}$ is the space of degree-$\ell$ spherical
harmonics on $\mathbb S^{d-1}$ and that
$N_\ell\equiv N_{\ell,d}=\dim(\calH_{\ell,d})$. We note for later use that (see \cite[Corollary~1.1.4]{dai2013approximation})
    \begin{align} \nonumber
    &N_\ell 
    = \binom{d+\ell-1}{\ell}
    -\binom{d+\ell-3}{\ell-2} 
      = \binom{d+\ell-2}{\ell}
    +\binom{d+\ell-3}{\ell-1} \\
    &\qquad=\frac{d+2\ell-2}{d-2}
    \binom{d+\ell-3}{\ell}
    \asymp_\ell d^\ell\,,
    \label{e:harmonic.dim.estimate}
    \end{align}
where the last estimate holds for any fixed $\ell$.   It is also straightforward to verify that $N_{\ell,d}$ is nondecreasing in $\ell$.   Let $P_\ell\equiv P_{\ell,d}$ be the normalized
zonal harmonic characterized by $P_\ell(1)=1$ and the reproducing identity (\cite[Lemma~1.2.5 and Corollary~1.2.7]{dai2013approximation})
\begin{equation}
 f(x)=N_{\ell}\E_z\left[
   P_{\ell}(\langle x,z\rangle)f(z)
 \right],\qquad f\in\calH_{\ell}\,.
 \label{eq:reproducing-identity}
\end{equation}
Recall that $Y_\ell\equiv (Y_{\ell,j} : j\in[N_\ell])$ is an orthonormal basis of $\calH_\ell$, so that we have
    \begin{equation}\label{e:y.ytranspose}
        \E[ Y_\ell(x)\otimes Y_\ell(x) ]
        = I_{\calH_\ell}\,.
    \end{equation}
By the addition formula (\cite[Equation~(1.2.8)]{dai2013approximation}), we have the relation
    \begin{equation}
    \label{e:addition.formula}
    \frac{\langle Y_\ell(x),Y_\ell(y)\rangle}{N_\ell}
    = P_\ell(\langle x,y\rangle)
    = \frac{\ell+\lambda}{\lambda   N_\ell}
        C_{\ell,\lambda}(\langle x,y\rangle)
    = \frac{C_{\ell,\lambda}(\langle x,y\rangle)}{C_{\ell,\lambda}(1)}
    \,,
    \end{equation}
where $C_{\ell,\lambda}$ are the Gegenbauer polynomials with $\lambda=(d-2)/2$.

 Write the zonal harmonic expansion of the centered cap kernel as
\begin{equation}
 \kappa(\langle x,z\rangle)
 =\sum_{\ell\ge1}\alpha_\ell
   P_\ell(\langle x,z\rangle)\,,
 \label{eq:kappa-harmonic-expansion}
\end{equation}
where $\alpha_\ell\equiv \alpha_{\ell,d}$. 
For a multidegree
$\bfa=(a_i: i \in S) \in\naturals^S$ and $h\geq 0$, set
\begin{equation}\label{e:def.q.s}
A\equiv A(\bfa)\equiv \|\bfa\|_1\,,\quad
 q\equiv q(h;\bfa) \equiv \frac{A+h}{2}\,,
 \quad
 s\equiv s(h;\bfa) 
 \equiv \frac{A-h}{2}
 \,.
\end{equation}
For $\alpha_\ell$ the coefficients in the expansion \eqref{eq:kappa-harmonic-expansion}, we abbreviate  
\begin{equation}
 \gamma_{h;\bfa}
 :=\frac{1}{d^q}
 \prod_{j\in S}\alpha_{a_j}\,,
 \label{eq:transition-coefficient-definition}
\end{equation}
with $q=q(h;\bfa)$ as defined above.
Define the kernel
\begin{align}
 \sfA_{h;\bfa}(x_u,x_S)
 &:=d^q
 \int
 P_h(\ip{x_u}{z})
 \prod_{j\in S} P_{a_j}(\ip{x_j}{z})\,d\sigma(z) 
 \label{eq:harmonic-contraction}
\end{align}
where the integration is over the uniform probability measure $\sigma$ on $\mathbb S^{d-1}$.
We will see below that this kernel appears naturally in the expansion of $\eta_{D,S}$. The following lemma gives conditions for the kernel to vanish:

\begin{lemma}[harmonic selection rule]
\label{lem:selection-rule}
For the kernel $\sfA_{h;\bfa}$ (defined by \eqref{eq:harmonic-contraction}) to be non-zero, it is necessary that
\begin{equation}
q(h;\bfa)= \frac{h+\|\bfa\|_1}{2}\in\mathbb Z\,,
 \qquad
 q(h;\bfa) \ge 
 \max\bigg\{h,\max_{1\le j\le k}a_j\bigg\}\,.
 \label{eq:selection-rule-full}
\end{equation}
Equivalently, the total degree is even, and none of the degrees
$h,a_1,\ldots,a_k$ exceeds the sum of all the others.  In particular, we must have
\begin{equation}
 A = \|\bfa\|_1\ge \max\{h,k\}\,,
 \qquad 
 s(h;\bfa)=\frac{\|\bfa\|_1-h}{2}\in \mathbb Z\,.
 \label{eq:selection-rule}
\end{equation}
\end{lemma}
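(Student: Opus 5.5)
The plan is to expand the integrand of \eqref{eq:harmonic-contraction} in harmonics in the single variable $z$ and use orthogonality of the spaces $\calH_\ell$ under $\sigma$. Fix $x_u$ and $x_S$. Each factor $z\mapsto P_{a_j}(\ip{x_j}{z})$ lies in $\calH_{a_j}$, and $z\mapsto P_h(\ip{x_u}{z})$ lies in $\calH_h$. The integral $\int P_h\prod_j P_{a_j}\,d\sigma$ is the $L^2(\sigma)$ inner product of $P_h(\ip{x_u}{\cdot})$ with the product $F(z)=\prod_{j}P_{a_j}(\ip{x_j}{z})$. We use two standard facts:
\begin{itemize}
\item[(i)] a product of harmonics of degrees $b_1,\dots,b_m$ is a polynomial of degree at most $\sum b_i$;
\item[(ii)] its restriction to the sphere has the same parity as $\sum b_i$.
\end{itemize}
Fact (ii) holds because each $P_{b}(\ip{x}{z})$ is a polynomial in $z$ of parity $b$, being homogeneous of degree $b$ modulo $\|z\|^2=1$. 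Hence the restriction of $F$ to the sphere decomposes as $\sum_{\ell\le A,\ \ell\equiv A \pmod 2}F_\ell$ with $F_\ell\in\calH_\ell$.

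The steps are as follows.
\begin{enumerate}
\item \emph{Parity.} The integral picks out only $F_h$, so nonvanishing forces $h\le A$ and $h\equiv A\pmod 2$. This is exactly $q(h;\bfa)\in\mathbb Z$, the total degree being even, together with $q\ge h$.
\item \emph{The bound $q\ge a_j$.} By symmetry of the integrand, move any one factor $P_{a_j}(\ip{x_j}{\cdot})$ to play the role of $P_h$, and group the remaining factors, including $P_h(\ip{x_u}{\cdot})$. Applying step 1 with roles swapped gives $a_j\le h+A-a_j$, i.e.\ $q\ge a_j$. This yields \eqref{eq:selection-rule-full} and the stated equivalent form: no degree exceeds the sum of the others.
\item \emph{Consequences \eqref{eq:selection-rule}.} Here $s=q-h\in\mathbb Z$, and $A\ge h$ comes from $q\ge h$. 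For $A\ge k$, recall that every $a_j\ge1$ since $\bfa\in\naturals^S$ with $\naturals$ the positive integers in this context, as $\alpha_0=0$ because $\kappa$ is centered. Hence $A=\sum_j a_j\ge k$.
\end{enumerate}

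The only step needing care is fact (ii) together with the degree bound (i). These follow from writing $P_b(\ip{x}{z})$ as a polynomial in $z\in\mathbb R^d$ of degree $b$ with only monomials of parity $b$: the Gegenbauer polynomials have definite parity. The product is then a polynomial of degree at most $A$ with parity $A$. Restricting such a polynomial to the sphere gives harmonic components only in degrees $\le A$ of matching parity, by the decomposition $\mathcal P_A|_{\mathbb S}=\bigoplus_{i}\calH_{A-2i}$ \cite[Theorem~1.1.3 / Corollary~1.1.4]{dai2013approximation} applied to the homogeneous pieces.
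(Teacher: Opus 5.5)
Your proof is correct and follows essentially the same route as the paper: orthogonality of $\calH_h$ (resp.\ $\calH_{a_j}$) to polynomials in $z$ of lower degree, applied with each factor in turn as the distinguished one, together with $a_j\ge1$ to get $A\ge k$. The only cosmetic difference is that the paper obtains the parity condition from the antipodal symmetry $z\mapsto -z$ of the integrand, whereas you read it off from the parity-graded harmonic decomposition of the product; the two arguments amount to the same thing.
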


\begin{proof}
As a function of the integration variable $z$, each factor
$P_{a_j,d}(\langle x_{i_j},z\rangle)$ is a spherical harmonic of degree $a_j$,
and $P_{h,d}(\langle x_u,z\rangle)$ has degree $h$.  Under $z\mapsto-z$, the
integrand in \eqref{eq:harmonic-contraction} is multiplied by
$(-1)^{h+\|\bfa\|_1}$.  Hence the integral vanishes unless the total degree $h+\|\bfa\|_1$ is even.

Fix one of the degrees, say $a_j$.  The product of all remaining factors is
the restriction to the sphere of a polynomial in $z$ of degree at most
$h+\|\bfa\|_1-a_j$.  Since degree-$a_j$ spherical harmonics are orthogonal to every
polynomial of degree strictly less than $a_j$, the integral vanishes whenever
$a_j>h+\|\bfa\|_1-a_j$.  Applying the same argument to the degree-$h$ factor shows
that the integral also vanishes when $h>\|\bfa\|_1$.  Thus every degree among
$h,a_1,\ldots,a_k$ is at most $(h+\|\bfa\|_1)/2$, proving
\eqref{eq:selection-rule-full}.  Finally, $a_j\ge1$ for every
$1\le j\le k$, so $k\le\|\bfa\|_1$; the remaining conclusions in
\eqref{eq:selection-rule} follow immediately.
\end{proof}


Let $D=D(x_V) \in \calH_{\bsell}$. As above, we fix a marked vertex $u\in V$ and denote its degree $h\equiv \ell_u$. Given $S\subseteq V^c$,
and a multidegree $\bfa=(a_i: i\in S)$, we define the operator
\begin{align}
 (\sfT_{\bfa}^{u,S} D)(x_{V\setminus u},x_S)
 &:= N_h \E_{x_u}\Big[
 D(x_u,x_{V\setminus u})
 \sfA_{h;\bfa}(x_u,x_S)
 \Big]\,.
 \label{eq:generated-state-definition}
\end{align}
Acting on $D(x_V)=D(x_u,x_{V\setminus u})$, the operator $\mathsf T_{\bfa}^{u,S}$ produces a new polynomial that no longer depends on the marked variable $x_u$, but depends on new variables $x_S$.

We now explain the role of the kernel \eqref{eq:harmonic-contraction} and operator \eqref{eq:generated-state-definition} in the expansion of $\eta_{D,S}$ (as defined by \eqref{eq:eta-DS}):
	Substituting the univariate expansion \eqref{eq:kappa-harmonic-expansion} for each $\kappa$ in
	\prettyref{eq:eta-DS}, we have
		\begin{align}
	\eta_{D,S}
	&=
    \sum_{\bfa\in\naturals^S} \bigg(
	\prod_{j\in S} \alpha_{a_j} \bigg)
	\E_{x_u}\bigg[
	D(x_u,x_{V\setminus u})
	\prod_{j\in S} 
	P_{a_j}(\langle x_u,x_j \rangle)
	\bigg] \nonumber  \\ 
	&= \sum_{\bfa\in\naturals^S} 
	\gamma_{h;\bfa}
	\underbrace{N_h
	\E_{x_u}\bigg[
	D(x_u,x_{V\setminus u})
	\sfA_{h;\bfa}(x_u,x_S)
	\bigg]}_{(\mathsf{T}^{u,S}_{\bfa} D)
    (x_{V\setminus u},x_S)}
     \label{eq:eta_D_S_expression}
	\end{align}
	where the last step recalls
\eqref{eq:reproducing-identity}, \eqref{eq:transition-coefficient-definition},
    \eqref{eq:harmonic-contraction} and 
		\prettyref{eq:generated-state-definition}.
Under the product uniform measure on $x_S$,
\eqref{eq:eta_D_S_expression} is the \textbf{orthogonal $L^2$ expansion of $\eta_{D,S}$ by
multidegree}, with $x_{V\setminus u}$ treated as fixed.
Indeed, 
$\sfA_{h;\bfa}(x_u,x_S)$ and hence $\mathsf{T}^{u,S}_{\bfa} D(x_{V\setminus u},x_S)$ as a function of $x_S$ belong to $\calH_{\bfa}$.
Because $\calH_{\bfa}$'s are orthogonal for different multidegrees, 
the projection of $\eta_{D,S}$ onto $\calH_{\bfa}$ is precisely 
$\gamma_{h;\bfa}\mathsf T_{\bfa}^{u,S}D$.

A desirable property of the averaging operator \prettyref{eq:generated-state-definition} is that it preserves the tensor-product structure: If $D \in \calH_{\bsell}$, 
we have $\sfT_{\bfa}^{u,S} D \in \calH_{\bsell'}$, where the new multidegree $\bsell'$ is obtained by deleting the degree of the marked vertex $u$ and adding those of the new variables in $S$ (see \prettyref{lmm:Tproperty} in \prettyref{sec:Tproperty}).
This property allows the cavity recurrence
in~\prettyref{sec:cavity-recurrence-final} to be formulated within the same
family of tensor-product spherical harmonic polynomials. Together with remainder estimates below, it allows us to close the recursion over polynomials 
of bounded total degree, which we will do in~\prettyref{sec:closing-recursion}.

Generalizing \prettyref{thm:harmonic-expansion} (corresponding to $D\equiv1$), the following result describes the harmonic expansion of $\eta_{D,S}$ using the averaging operator $T_{\bfa}^{u,S}$
and provides the needed remainder estimates.

\begin{lemma}[harmonic expansion of $\eta_{D,S}$]
\label{lem:state-harmonic-expansion}
Let $D\equiv D(x_V)\in\calH_{\bsell}$ be nonzero, let $u\in V$, and set
$h=\ell_u$. Fix integers $k,M\ge1$.
For any $S\subset V^c$ with $|S|=k$, the quantity $\eta_{D,S}$ defined by
\eqref{eq:eta-DS} has the orthogonal truncation
\begin{equation}
 \eta_{D,S}
 =\sum_{\bfa\in\calA_{k,M}(h)}
    \gamma_{h;\bfa}(\mathsf T_{\bfa}^{u,S} D)(x_{V\setminus\{u\}},x_S)
    +\mathcal R_{D,S,M},
 \label{eq:eta-DS-expansion}
\end{equation}
where, recalling \eqref{e:def.q.s} and Lemma~\ref{lem:selection-rule}, we define
    \begin{equation}
    \label{e:def.calA.h}
    \calA_{k,M}(h)
    := \bigg\{
    \bfa \in\naturals^k
    : 
    \textup{
    $q(h;\bfa) \le M$
    and 
    $\bfa$ satisfies \eqref{eq:selection-rule}}
    \bigg\}\,.
    \end{equation}
There are constants $C_0,C>0$, depending only on $h,k,M$, such that
if $d\ge 4$ and $0<p\le e^{-1}$, then for any
$\bfa\in\calA_{k,M}(h)$,
\begin{equation}
 (\gamma_{h;\bfa})^2
 \le  C\frac{p^kL^{\|\bfa\|_1}}{d^h},
 \qquad
 \E[(
 T_{\bfa}^{u,S} D)^2]
 \le C \frac{\E(D^2)}{d^s}\,,
 \label{eq:transition-scales}
\end{equation} 
where $s = (\|\bfa\|_1-h)/2$. If, in addition, $d\ge C_0L^2$,
the harmonic remainder satisfies
\begin{equation}
 \E[(\mathcal R_{D,S,M})^2]
 \le
 C \E(D^2)\frac{p^kL^{2M+2-h}}{d^{M+1}}\,,
 \label{eq:harmonic-remainder-quantitative}
\end{equation}
where the expectations are under the independent uniform prior on the
arguments of each integrand. 
\end{lemma}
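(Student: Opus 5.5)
The plan follows the structure of Theorem~\ref{thm:harmonic-expansion}, now carrying the factor $D$ along. The starting point is the identity \eqref{eq:eta_D_S_expression}. For fixed $x_{V\setminus u}$, it is the orthogonal expansion of $\eta_{D,S}$ over the spaces $\calH_{\bfa}$ in the variables $x_S$. By Lemma~\ref{lem:selection-rule}, only multidegrees $\bfa$ satisfying \eqref{eq:selection-rule} survive. So I split the sum into the terms with $q(h;\bfa)\le M$, which form the main sum over $\calA_{k,M}(h)$, and the terms with $q>M$, which I collect into $\mathcal R_{D,S,M}$. Orthogonality is automatic.

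For the two scale bounds in \eqref{eq:transition-scales}, I first handle the operator. By the tensor structure (Lemma~\ref{lmm:Tproperty}), write $D(x_u,x_{V\setminus u})=\ip{C(x_{V\setminus u})}{Y_h(x_u)}$. The reproducing identity \eqref{eq:reproducing-identity} applied in $x_u$ then gives
\[
\sfT_{\bfa}^{u,S}D=d^q\E_z\Bigl[D(z,x_{V\setminus u})\prod_{j\in S}P_{a_j}(\ip{x_j}{z})\Bigr].
\]
I expand each $P_{a_j}$ by the addition formula \eqref{e:addition.formula} as $\ip{Y_{a_j}(x_j)}{Y_{a_j}(z)}/N_{a_j}$. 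The $L^2$ norm in $x_S$ then becomes the squared norm of a contraction of $C\otimes\E_z[Y_h(z)\otimes\bigotimes_j Y_{a_j}(z)]$, divided by $\prod N_{a_j}^2$.

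The key estimate is a bound on the Gaunt-type tensor $\E_z[Y_h\otimes\bigotimes Y_{a_j}]$, viewed as an operator from $\calH_h$ to $\bigotimes\calH_{a_j}$. I would bound it in operator norm by $\bigl(\prod N_{a_j}/N_h\bigr)^{1/2}$ times a constant, or more directly by $\prod_j N_{a_j}\, d^{-q}\, d^{h}\cdot O(1)$ up to the right powers. Combining this with $N_\ell\asymp d^\ell$ from \eqref{e:harmonic.dim.estimate}, the net scale is $d^{2q}\cdot d^{-A}\cdot d^{-(A-h)/2}\cdot d^{A-h}$, which gives $d^{-s}$. Concretely, I would compute $\E[(\sfT D)^2]$ through the kernel $\E_{x_S}[\prod_j P_{a_j}(\ip{x_j}{z})P_{a_j}(\ip{x_j}{z'})]=\prod_j P_{a_j}(\ip z{z'})/N_{a_j}$. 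This reduces the bound to controlling $\E_{z,z'}\bigl[\ip{C}{Y_h(z)}\ip{C}{Y_h(z')}\prod_j P_{a_j}(\ip z{z'})\bigr]$. I then expand the product $\prod_j P_{a_j}$ in zonal harmonics; only the $P_h$ component survives, and its coefficient is $O(d^{-(A-h)/2})$ by standard Gegenbauer linearization estimates.

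For the coefficient bound, I need $|\alpha_\ell|^2\lesssim_\ell pL^\ell d^\ell$. This is exactly the per-factor input already used in Theorem~\ref{thm:harmonic-expansion}, which I would reuse: $\alpha_\ell=N_\ell\E[\kappa(W)P_\ell(W)]$, estimated with $\tau\asymp\sqrt{L/d}$. Then $\gamma^2\le d^{-2q}\prod_j pL^{a_j}d^{a_j}=p^kL^{A}d^{A-2q}=p^kL^{A}d^{-h}$, as claimed.

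The remainder is the main obstacle. Here
\[
\E[\mathcal R^2]=\sum_{\bfa:\,q>M}\gamma^2\,\E[(\sfT D)^2]\lesssim\E(D^2)\sum p^kL^{A}d^{-h-s}
=\E(D^2)\sum p^kL^{A}d^{-q},
\]
but the sum over unbounded $\bfa$ needs uniformity in $\ell$ that the fixed-$\ell$ constants do not provide. I would instead bound the remainder without termwise constants. First, Bessel's inequality in $x_u$ shows that $\E_{x_u}[D\,F]$ only sees the degree-$h$ projection of $F(x_u)=\prod_j\kappa(\ip{x_u}{x_j})$, and its norm is at most $\E(D^2)^{1/2}$ times $\|\Pi_h F\|$ in a suitable sense. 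Second, I would control the tail of $\prod_j\kappa$ beyond total harmonic degree $2M+2-h$ using the Gegenbauer tail estimate for the cap kernel available when $d\ge C_0L^2$: the tail coefficients decay geometrically like $(CL/d)^{\ell/2}$, which is where the hypothesis $d\ge C_0L^2$ enters. Summing this geometric series gives the first omitted level $q=M+1$, namely $p^kL^{2M+2-h}d^{-(M+1)}$, matching \eqref{eq:harmonic-remainder-quantitative}. I would adapt the proof of \eqref{eq:initial-harmonic-tail} from Section~\ref{sec:high-harmonic-remainder}, inserting $D$ through the reproducing identity.
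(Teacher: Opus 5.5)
Your derivation of the expansion \eqref{eq:eta-DS-expansion} and of both bounds in \eqref{eq:transition-scales} is sound and follows the paper's route. Your kernel computation is exactly \eqref{eq:T-contraction}, namely $\E[(\sfT_{\bfa}^{u,S}D)^2]=d^{2q}b_h\,\E(D^2)/(N_h\prod_jN_{a_j})$ with $b_h$ the $P_h$-coefficient of $\prod_jP_{a_j}$. The estimate $b_h=O(d^{-s})$ that you call standard is what Lemma~\ref{lem:direct-b-bounds} proves from \eqref{eq:zonal-integral-ub} and H\"older. The bound $(\alpha_\ell)^2\le C_Mp(dL)^\ell$ for $\ell\le 2M$ is a standalone estimate (Lemma~\ref{lem:direct-cap-bound}), not something to import from Theorem~\ref{thm:harmonic-expansion}. (Your preliminary heuristic $d^{2q}d^{-A}d^{-(A-h)/2}d^{A-h}$ actually equals $d^{q}$, so drop it.)

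The gap is the remainder bound \eqref{eq:harmonic-remainder-quantitative}, which is the real content of the lemma. There is no independent proof of \eqref{eq:initial-harmonic-tail} to adapt: in the paper that bound is the case $h=0$, $D\equiv1$ of this very lemma. The mechanism you sketch also fails on two counts.

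First, the cap kernel offers no geometric decay in the degree. All that is available is $\sum_{\ell\ge1}w_\ell=\E[\kappa(\rho)^2]=1$ for $w_\ell=(\alpha_\ell)^2/N_\ell$, plus the bound $w_\ell\le p[C(L+\ell)]^\ell/\ell!$ for $\ell\le d$ (Lemma~\ref{lem:direct-cap-bound}), which is vacuous once $\ell\gg L$. The $w_\ell$ carry no negative power of $d$ at all.

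Second, Bessel/Cauchy--Schwarz in $x_u$ loses a factor $N_h\asymp d^h$, because in fact $\E[\eta_{D,S}^2]=\E(D^2)\,\E[(\Pi_hF)^2]/N_h$. This would leave only $d^{h-M-1}$, which is useless since $h$ ranges up to $4M$ in Section~\ref{sec:closing-recursion}.

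What is needed is to combine your two computations into the exact termwise identity
\[
\gamma_{h;\bfa}^2\,\E\big[(\sfT_{\bfa}^{u,S}D)^2\big]=\E(D^2)\Big(\prod_{i\in S}w_{a_i}\Big)\frac{b_h}{N_h}\,.
\]
Then $\E[\mathcal R_{D,S,M}^2]$ is $\E(D^2)$ times a sum of nonnegative terms whose entire $d$-decay sits in $b_h/N_h=\E_\rho[P_h(\rho)\prod_iP_{a_i}(\rho)]$. Now split at $Q=\lceil4(k+M+h+1)L\rceil$.

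If $\max_ia_i\ge Q$, use $b_h/N_h\le1/N_{\max_ia_i}\le1/N_Q\le(2Q/d)^Q$ (the first bound in Lemma~\ref{lem:direct-b-bounds}). Sum the infinite tail using only $\sum_\ell w_\ell=1$; the result is negligible once $d\ge C_0L^2$.

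If $\max_ia_i<Q$, then $A\lesssim L$. The uniform cap bound together with $b_h/N_h\le(C(h+A)/d)^{(h+A)/2}$ bounds the level-$q$ contribution by $Cp^kL^{-h}(CL^2/d)^{q}$; the factorials in the cap bound absorb $(h+A)^{(h+A)/2}$. Summing this geometric series in $L^2/d$ (not $L/d$) from $q=M+1$ gives \eqref{eq:harmonic-remainder-quantitative}. This is precisely where $d\ge C_0L^2$ is used.
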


The \hyperlink{proof:l.roadmap-state-harmonic-expansion}{proof of Lemma~\ref{lem:state-harmonic-expansion}} is deferred to Section~\ref{sec:high-harmonic-remainder}. For now, we comment only that equation~\prettyref{eq:eta-DS-expansion} follows directly by truncating
the orthogonal expansion in~\prettyref{eq:eta_D_S_expression}; 
the main term in \prettyref{eq:eta-DS-expansion}
is the best $L^2$ approximation in the
variables $x_S$ from the sum of harmonic subspaces with total degree
at most $2M-h$.
The reason we truncate the expansion according to $q(h;\bfa) \le M$ is that 
\prettyref{eq:transition-scales} yields an upper bound on $|\gamma_{h;\bfa}|^2  \E[(
 T_{\bfa}^{u,S} D)^2]$ proportional to $d^{-{q(h;\bfa)}}$, so that the total residual contribution \prettyref{eq:harmonic-remainder-quantitative} decays as $d^{-(M+1)}$.

\subsection{The resulting cavity recurrence}
\label{sec:cavity-recurrence-final}

We can now state the cavity recurrence for posterior moments of spherical harmonics, using the preceding six steps.  For brevity, collect the 
multi-indices across the Fourier levels by writing
\begin{equation}\label{eq:calA.star.K.M.h}
 \calA^*_{K,M}(h)
 =\bigcup_{k=1}^{K-1}\calA_{k,M}(h)\,,
\end{equation}
for $\calA_{k,M}(h)$ as defined by \eqref{e:def.calA.h}.
Choose the fixed cutoffs $K,M$ so that
\begin{equation}
 M\ge K,\qquad
 K>2/\delta,\qquad
 \delta(3M+4-k)>1\quad(1\le k<K),\qquad
 \delta(M+4)>1.
 \label{eq:cutoff-conditions}
\end{equation}
The first condition above chooses the harmonic cutoff $M$ after the Fourier
cutoff $K$, the second will make the Fourier remainder above level $K$ negligible,
the third will make the high-degree harmonic remainder negligible after summing
over all $S$ with $1\le |S|<K$, and the fourth condition will be used later when closing the recursion (see~\eqref{eq:terminal-bound}).

\begin{proposition}[cavity recurrence]
\label{prop:cavity}
Let $K$ and $M$ satisfy~\prettyref{eq:cutoff-conditions}. For every  $D=D(x_V) \in \calH_{\bsell}$, fix $u\in V$ and let $h\equiv \ell_u$. Assume $|V| \le B_*$, where the slack $B_*$ is defined in \eqref{eq:local-row-truncation_slack}. We then have
\begin{align}
 \Post(D)
 &\le Cg(D)
   +C \E(D^2) \phi_{\nu(D)}(\Delta) \notag\\
 &\quad+C 
 \sum_{\bfa\in\calA^*_{K,M}(h)}
    \sum_{S \subseteq V^c: |S|=|\bfa|} \frac{p^{|\bfa|}L^{\|\bfa\|_1}}{d^h}
    \Post(\mathsf T_{\bfa}^{u,S} D)
   +o(g(D))
 \label{eq:cavity-schematic}
\end{align}
where $|\bfa|$ denotes the length of $\bfa$; and the constant $C$ only depends on $|V|$, $t(D)$, $K$, and $M$.
\end{proposition}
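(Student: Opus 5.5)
The plan is to chain the six steps of this section together. Throughout, the bound concerns the second moment of the posterior mean, reduced step by step.

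\emph{Steps 1--4.} Lemma~\ref{lem:restoration} gives
\[
\Post(D)\le Cg(D)+4\E\bigl[\E(D\mid\bG_{-u},\bY_I)^2\bigr],
\]
and Lemma~\ref{lem:puncturing} rewrites the second term as $\E\int r_{u,I,D}^2/\Lambda_{u,I}\,dQ_I$. We split the domain of integration into three pieces:
\begin{itemize}
\item On $(T_I)^c$, Lemma~\ref{lem:discarded-row} bounds the contribution by $o(g(D))$.
\item On $T_I\cap\{\Lambda_{u,I}<1/2\}$, Lemma~\ref{lem:denominator} bounds it by $C\E(D^2)\phi_{\nu(D)}(\Delta)$. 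This is where we use the hypothesis $|V|\le B_*$.
\item On $T_I\cap\{\Lambda_{u,I}\ge 1/2\}$, Lemma~\ref{lem:weighted-fourier} (valid since $K>2/\delta$) bounds it by $4\sum_{1\le|S|<K}\E[\E(\eta_{D,S}\mid\bG_{-u})^2]+o(g(D))$.
\end{itemize}
Since the factor $4$ from Lemma~\ref{lem:restoration} is absorbed into $C$, the first two lines of \eqref{eq:cavity-schematic} are already accounted for.

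\emph{Step 5: expanding the Fourier terms.} Fix $S\subseteq V^c$ with $|S|=k<K$. Insert the expansion \eqref{eq:eta-DS-expansion} from Lemma~\ref{lem:state-harmonic-expansion}. The number of multi-indices in $\calA_{k,M}(h)$ is bounded in terms of $h,k,M$, so Cauchy--Schwarz together with conditional Jensen gives
\[
\E\bigl[\E(\eta_{D,S}\mid\bG_{-u})^2\bigr]\le C\sum_{\bfa\in\calA_{k,M}(h)}\gamma_{h;\bfa}^2\,\E\bigl[\E(\mathsf T^{u,S}_{\bfa}D\mid\bG_{-u})^2\bigr]+C\,\E[\mathcal R_{D,S,M}^2].
\]
By \eqref{eq:transition-scales}, $\gamma_{h;\bfa}^2\le Cp^{k}L^{\|\bfa\|_1}/d^h$, which is exactly the weight appearing in \eqref{eq:cavity-schematic}.

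\emph{Replacing $\bG_{-u}$ by $G_n$.} The posterior in the display above conditions on $\bG_{-u}$, whereas $\Post$ conditions on $G_n$. The polynomial $\mathsf T^{u,S}_{\bfa}D$ depends on $x_{V\setminus u},x_S$ and not on $x_u$. We have $\sigma(\bG_{-u})\subseteq\sigma(G_n)$, so conditional Jensen gives $\E[\E(F\mid\bG_{-u})^2]\le\E[\E(F\mid G_n)^2]=\Post(F)$ for any square-integrable $F$. Applying this with $F=\mathsf T^{u,S}_{\bfa}D$ yields $\Post(\mathsf T^{u,S}_{\bfa}D)$. Summing over $k<K$ and over $S$ then produces the main sum, indexed by $\calA^*_{K,M}(h)$.

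\emph{Step 6: the harmonic remainder.} It remains to show that the aggregate remainder
\[
\sum_{k<K}\binom{n}{k}C\,\E(D^2)\,\frac{p^kL^{2M+2-h}}{d^{M+1}}
\]
is $o(g(D))=o(\E(D^2)pL^{\nu(D)})$. This requires $d\ge C_0L^2$, which follows from \eqref{eq:regime}. Using $d\gg(npL)^3$, each term is at most
\[
\E(D^2)\,(np)^{k}L^{2M+2-h}(npL)^{-3(M+1)}.
\]
Dividing by $pL^{\nu(D)}$ gives $n\,(np)^{-(3M+3-k)-1}$ times a nonpositive power of $L$; the exponent of $L$ is $2M+2-h-3M-3-\nu(D)<0$, since $M\ge K$. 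The first condition in \eqref{eq:cutoff-conditions} alone is not quite sufficient here. Using $np\ge n^\delta$, the quotient is $O\bigl(n^{1-\delta(3M+4-k)}\bigr)=o(1)$ by the third condition in \eqref{eq:cutoff-conditions}. (If $L$ is large, the $L$-powers only help.)

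The expected main obstacle is the exponent bookkeeping in this final step, together with the measurability argument in Step~5 that justifies replacing $\bG_{-u}$ by $G_n$. The remaining pieces are direct applications of earlier lemmas.
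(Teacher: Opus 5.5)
Your proposal is correct and follows the paper's proof essentially step for step. You use Lemmas~\ref{lem:restoration} and~\ref{lem:puncturing}, then the three-way split handled by Lemmas~\ref{lem:discarded-row}, \ref{lem:denominator} and~\ref{lem:weighted-fourier}, then the harmonic expansion with Cauchy--Schwarz and conditional Jensen via $\sigma(\bG_{-u})\subseteq\sigma(G_n)$, and finally the same tail count $n(np)^{-(3M+4-k)}L^{-(M+1+h+\nu(D))}$ closed by the third condition in \eqref{eq:cutoff-conditions}. The only blemish is cosmetic: the $L$-exponent is negative simply because $h,\nu(D)\ge1$, not because $M\ge K$.
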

\begin{proof}
Recall $I=V^c$. By
Lemma~\ref{lem:restoration} and Lemma~\ref{lem:puncturing},
\begin{equation}
 \Post(D)
 \le Cg(D)
   +4\E\bigg[
   \Big(\E[D\mid\bG_{-u},\bY_I]\Big)^2
   \bigg]
= Cg(D) + 4\E\bigg[\int\frac{r_{u,I,D}^2}{\Lambda_{u,I}}\,dQ_I\bigg]\,.
 \label{eq:cavity-proof-restoration}
\end{equation}
We decompose this integral over $(T_I)^c$,
$T_I\cap\{\Lambda_{u,I}<1/2\}$, and
$T_I\cap\{\Lambda_{u,I}\ge1/2\}$, for $T_I$ as defined by
\eqref{eq:local-row-truncation}.  Lemmas~\ref{lem:discarded-row}
and~\ref{lem:denominator} bound the
first two contributions by
\begin{equation}
 o(g(D))
 +C\E(D^2) \phi_{\nu(D)}(\Delta)\,.
 \label{eq:cavity-proof-first-pieces}
\end{equation}
It remains to control the third contribution.
The truncated Fourier expansion in Lemma~\ref{lem:weighted-fourier} gives
\begin{equation}
\E\left[\int_{T_I\cap\{\Lambda_{u,I}\ge1/2\}}
       \frac{r_{u,I,D}^2}{\Lambda_{u,I}}\,dQ_I\right]
 \le 4 \sum_{k=1}^{K-1}
   \sum_{\substack{S\subseteq I\\|S|=k}}
   \E\left[\left|\E[\eta_{D,S}\mid\bG_{-u}]\right|^2\right] + o(g(D)).
 \label{eq:cavity-proof-low-parseval}
\end{equation}
Fix $S\subseteq I$ with $|S|=k<K$. By assumption~\eqref{eq:regime}, we have
$d\gg L^2$, so $d\ge C_0(h,k,M)L^2$ for large enough $n$.  
Lemma~\ref{lem:state-harmonic-expansion} then gives
\[
 \eta_{D,S}
 =\sum_{\bfa\in\calA_{k,M}(h)}
   \gamma_{h;\bfa}
   (\mathsf T^{u,S}_{\bfa}D)(x_{V\setminus u},x_S)
   +\mathcal R_{D,S,M}\,,
\]
with $\calA_{k,M}(h)$ as in \eqref{eq:eta-DS-expansion}.
Put
\[
 N_*:=1+\max_{1\le k<K}|\calA_{k,M}(h)|.
\]
Since $q\le M$ implies $h+\|\bfa\|_1\le2M$, we have
$N_*\le C_{K,M}$. 
Conditional expectation followed by Cauchy--Schwarz yields
\begin{align}
 \left|\E[\eta_{D,S}\mid\bG_{-u}]\right|^2
 &\le N_*\sum_{\bfa\in\calA_{k,M}(h)}
   |\gamma_{h;\bfa}|^2
   \left|\E\left[(\mathsf T_{\bfa}^{u,S}D)(x_{V \setminus u},x_S)\mid\bG_{-u}\right]\right|^2
   \notag\\
 &\qquad+N_*\left|\E[\mathcal R_{D,S,M}\mid\bG_{-u}]\right|^2\,.
 \label{eq:cavity-proof-finite-CS}
\end{align}
Because $\sigma(\bG_{-u})\subseteq\sigma(G_n)$, Jensen's inequality implies that 
\begin{align}
 \E\left[\left|\E\left[(\mathsf T_{\bfa}^{u,S} D)(x_{V \setminus u},x_S)\mid\bG_{-u}\right]\right|^2\right]
 &\le \Post(\mathsf T_{\bfa}^{u,S} D),
 \label{eq:cavity-proof-projection}\\
 \E\left[\left|\E[\mathcal R_{D,S,M}\mid\bG_{-u}]\right|^2\right]
 &\le \E[\mathcal R_{D,S,M}^2].
 \label{eq:cavity-proof-remainder-contraction}
\end{align}
 Therefore, 
combining \eqref{eq:cavity-proof-finite-CS}--\eqref{eq:cavity-proof-remainder-contraction} with the coefficient bound
\eqref{eq:transition-scales}
gives
\begin{align}
 &\sum_{k=1}^{K-1}
   \sum_{\substack{S\subseteq I\\|S|=k}}
   \E\left[\left|\E[\eta_{D,S}\mid\bG_{-u}]\right|^2\right] \notag\\
 &\quad\le
 C\sum_{\bfa\in\calA^*_{K,M}(h)}
   \sum_{S \subset I: |S|=|\bfa|} \frac{p^{|\bfa|}L^{\|\bfa\|_1}}{d^h}
   \Post(\mathsf T_{\bfa}^{u,S} D)
 +\operatorname{Tail}_{\mathrm{har}}(D)\,,
 \label{eq:cavity-proof-transition-sum}\\
 \operatorname{Tail}_{\mathrm{har}}(D)
 &:=C\sum_{k=1}^{K-1}
   \sum_{\substack{S\subseteq I\\|S|=k}}
   \E[(\mathcal R_{D,S,M})^2].
 \label{eq:cavity-proof-harmonic-tail}
\end{align}
By assumption~\eqref{eq:regime}, we have $d\gg L^2$.
Recalling from
\eqref{e:def.L.eps} that
$\varepsilon_n=(npL)^3/d$, 
the estimate
\eqref{eq:harmonic-remainder-quantitative}
from Lemma~\ref{lem:state-harmonic-expansion} gives
\begin{align}
\frac{\operatorname{Tail}_{\mathrm{har}}(D)}
      {g(D)}
 &\le C\sum_{k=1}^{K-1}
  \frac{n\varepsilon_n^{M+1}}
       {(np)^{3M+4-k}L^{M+1+h+\nu(D)}}
 =o(1).
 \label{eq:cavity-proof-harmonic-tail-small}
\end{align}
Indeed, the terms vanish by
\eqref{eq:cutoff-conditions}, since $np\ge n^\delta$ and
$\varepsilon_n=o(1)$.
Substitute \eqref{eq:cavity-proof-transition-sum} and \eqref{eq:cavity-proof-harmonic-tail-small} into
\eqref{eq:cavity-proof-low-parseval}, then use
\eqref{eq:cavity-proof-restoration}--\eqref{eq:cavity-proof-first-pieces}.
All constants arising from Cauchy--Schwarz
and the fixed cutoffs are absorbed into $C$.  This gives exactly
\eqref{eq:cavity-schematic} and proves the proposition.
\end{proof}

\section{Closing the posterior recursion}
\label{sec:closing-recursion}

In this section we complete the \hyperlink{proof:t.posterior-components}{proof of Theorem~\ref{thm:posterior-components}}.  The
one-step recurrence has already been derived in
Proposition~\ref{prop:cavity}. We
close it over a
bounded-degree class: with $\calH_{\bsell}$ as in \eqref{eq:tensor-harmonics}, define 
\begin{equation}
 \mathfrak H_{\le2M}
 :=\bigcup_{t=1}^{2M} \bigcup_{V \subseteq[n]} \ \bigcup_{\substack{\bsell: V \to \naturals\\
                       \| \bsell\|_1=2t\\
                     }}
   \bigl(\calH_{\bsell}\setminus\{0\}\bigr)\,.
 \label{eq:bounded-harmonic-class}
\end{equation}
Equivalently, recalling that $t(D)=\|\bsell\|_1/2$,
this is the union of the positive multidegree spaces with
$t(D)\le 2M$.

Recall the definition of $g(D)$ from \eqref{eq:gD}.
For $D\in\mathfrak H_{\le 2M}$, define
\begin{align}\label{eq:XD}
 X(D) &:=
 \frac{\Post(D)}
 {g(D)(np)^{(t(D)-M)_+}}\,,\\
Z & :=\sup_{D\in\mathfrak H_{\le 2M}}X(D) .
 \label{eq:Z}
\end{align}
First observe $Z<\infty$: indeed, the conditional Jensen's inequality gives
$\Post(D)\le\E(D^2)$, so
\begin{align}
Z=\sup_{D\in\mathfrak H_{\le 2M}}X(D)
\le \sup_{D\in\mathfrak H_{\le 2M}}
 \frac{1}{pL^{\nu(D)}(np)^{(t(D)-M)_+}} 
\le\frac1p\,.
 \label{eq:Z_trivial_bound}
\end{align}
The next proposition gives a much sharper bound on $Z$, which in turn implies Theorem~\ref{thm:posterior-components}.

\begin{proposition}[closing the posterior recursion]
\label{prop:closure}
  Assume the setting of Theorem~\ref{thm:main-detection}, and take cutoffs $K,M$ satisfying \eqref{eq:cutoff-conditions}.   Choose $B_*\ge 4M$. There exists deterministic $0\le \theta_n =o(1)$ such that the quantity $Z$ defined by \eqref{eq:Z} satisfies the bound
\begin{equation}
 Z\le C\bigg(1+ \frac{\Delta}{p}\bigg)
 + \theta_n 
 Z+o(1)\,,
 \label{eq:closed-envelope}
\end{equation}
  for a constant $C=C(\delta,K,M)$.  Adjusting the constant appropriately gives $Z\le C(1+\Delta/p)$.
\end{proposition}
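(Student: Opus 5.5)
The plan is to fix an arbitrary $D\in\mathfrak H_{\le2M}$ with $t=t(D)$, apply Proposition~\ref{prop:cavity} to it, and divide by $g(D)(np)^{(t-M)_+}$. The goal is a bound on $X(D)$ of the form \eqref{eq:closed-envelope}, uniform in $D$, after which we take the supremum. Two reductions come first. Every $D\in\mathfrak H_{\le2M}$ has $|V|\le 2t\le 4M\le B_*$, so Proposition~\ref{prop:cavity} applies. For the marked vertex $u$ we choose one whose degree $h=\ell_u$ is maximal; in particular $h\ge1$.

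The first two terms of \eqref{eq:cavity-schematic} divided by $g(D)$ give $C+C\phi_{\nu(D)}(\Delta)/(pL^{\nu(D)})$. To bound the second piece, take $\Delta\le1$; otherwise the trivial bound \eqref{eq:Z_trivial_bound} already gives $Z\le 1/p\le\Delta/p$. Then $\phi_\nu(\Delta)=\Delta\log^\nu(e/\Delta)$. If $\Delta\ge n^{-10}$, this is at most $C\Delta L^\nu$, because $\log(e/\Delta)\lesssim\log n\lesssim_\delta L$. If $\Delta<n^{-10}$, the term is $o(1)$. Either way this piece contributes at most $C(1+\Delta/p)+o(1)$, and the $o(g(D))$ error contributes $o(1)$.

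The transition terms are the heart of the proof. Set $D'=\mathsf T^{u,S}_{\bfa}D$. By Lemma~\ref{lmm:Tproperty}, $D'$ lies in a tensor space of total degree $2t'=2t-h+\|\bfa\|_1=2t+2s$, where $s=(\|\bfa\|_1-h)/2\ge0$ by the selection rule (Lemma~\ref{lem:selection-rule}). Now write $\Post(D')\le X(D')g(D')(np)^{(t'-M)_+}$ and use $\E(D'^2)\le C\E(D^2)d^{-s}$ from \eqref{eq:transition-scales}. Summing over the at most $n^{k}$ sets $S$ of size $k=|\bfa|$, the transition sum divided by $g(D)(np)^{(t-M)_+}$ is at most
\[
C\,Z\,\frac{(np)^k L^{\|\bfa\|_1+\nu(D')-\nu(D)}}{d^{h+s}}\,(np)^{(t+s-M)_+-(t-M)_+}.
\]
Here $h+s=q=(h+\|\bfa\|_1)/2$, and $d\gg(npL)^3$ gives $d^{-q}\le \varepsilon_n^q (npL)^{-3q}$. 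It therefore suffices to show $k+(t+s-M)_+-(t-M)_+-3q\le -1$ and to control the powers of $L$.

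For the exponent of $np$: the selection rule gives $k\le\|\bfa\|_1=2q-h$, and $(t+s-M)_+-(t-M)_+\le s=q-h$. Hence the exponent is at most $2q-h+q-h-3q=-2h\le-2$. For the power of $L$: since $\nu(D')\le 2t'$, $\nu(D)\ge1$ and $t'\le 3M$, the exponent of $L$ is at most $\|\bfa\|_1+6M-1$. This is dominated by the factor $L^{-3q}(np)^{-2}$, because $np\ge n^\delta$ beats any fixed power of $L$. So every transition term is $\le \theta_n Z$ with $\theta_n=o(1)$, and the number of $\bfa$ is bounded.

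The main obstacle is closure. $D'$ can have $t'>2M$, leaving the class $\mathfrak H_{\le2M}$, and the bound $X(D')\le Z$ is then unavailable. This is exactly what the weight $(np)^{(t-M)_+}$ and the condition $\delta(M+4)>1$ are for. When $t'>2M$, we do not use $Z$; we use the trivial bound $\Post(D')\le\E(D'^2)$. Then
\[
\frac{\Post(D')}{g(D)(np)^{(t-M)_+}}\le C\,\frac{(np)^k L^{O(M)}}{pL\,d^{q}(np)^{(t-M)_+}}.
\]
Since $t+s=t'>2M$, either $t>M$ or $s$ is large, and in either case this quantity is at most roughly $(np)^{-(M+4)}\cdot n=o(1)$ (the factor $1/p\le n$ is absorbed here), which gives the $o(1)$ term. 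The bookkeeping for this case split (comparing $(t-M)_+$ with $s$ and using $q\ge s$, $k\le 2q-h$) is where I expect to spend most care.

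Finally, take the supremum over $D$ to get \eqref{eq:closed-envelope}. Since $Z<\infty$ by \eqref{eq:Z_trivial_bound} and $\theta_n\to0$, rearranging yields $Z\le C(1+\Delta/p)$.
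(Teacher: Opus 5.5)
Your architecture is the paper's. You apply Proposition~\ref{prop:cavity}, which is valid since $|V|\le 2t(D)\le 4M\le B_*$, and divide by $g(D)(np)^{(t(D)-M)_+}$. In-class transitions are bounded by $Z$ times a coefficient whose power of $np$ is at most $k+s-3q\le -2h\le -2$; this is the paper's $3q-k\ge s+2h$. Out-of-class transitions are bounded via $\Post\le \E(D'^2)$. Your cruder $L$-bookkeeping, in place of the paper's $\nu(\mathsf T D)-\nu(D)\le 2s+1$, is harmless because $L\le \log n$ and $np\ge n^\delta$. The out-of-class case also needs no case split. From $t+s=t'\ge 2M+1$ you get $s+(t-M)_+\ge t+s-M\ge M+1$. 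So the power of $np$ in the denominator is at least $s+2+(t-M)_+\ge M+3$, and the term is $O\bigl(n/(np)^{M+4}\bigr)=o(1)$.

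The genuine error is in the $\phi_{\nu(D)}(\Delta)$ term. For $n^{-10}\le \Delta\le 1$ you assert $\log(e/\Delta)\lesssim \log n\lesssim_\delta L$, but the second inequality is false. The hypothesis $p\ge n^{-1+\delta}$ only gives $L\le (1-\delta)\log n$, which is the opposite direction. Since $p\to 0$ may happen arbitrarily slowly, $L$ can be far smaller than $\log n$; for example, $p=1/\log n$ gives $L=\log\log n$. What your argument actually yields is $C(\Delta/p)(\log n/L)^{\nu(D)}$. That does not give \eqref{eq:closed-envelope} with a constant $C=C(\delta,K,M)$, although this weaker bound would still close the final self-bounding step for Theorem~\ref{thm:main-detection}, since $q_n(\log n)^{4M}/p=o(1)$.

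The fix is to split at $\Delta=p^2$ instead of $n^{-10}$, as the paper does:
\begin{itemize}
\item If $p^2\le \Delta\le 1$, then $\log(e/\Delta)\le 1+2L\le 3L$, so $\phi_\nu(\Delta)/(pL^\nu)\le 3^\nu\Delta/p$.
\item If $\Delta<p^2$, note that $\phi_\nu$ is nondecreasing on $[0,e^{1-\nu}]$, and this interval contains $[0,p^2]$ for large $n$. Hence $\phi_\nu(\Delta)\le p^2(1+2L)^\nu$, and the term is at most $3^\nu p=o(1)$.
\end{itemize}
Both bounds are uniform over $\nu\le 4M$.
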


\begin{proof}
Apply Proposition~\ref{prop:cavity} to an arbitrary
$ D \in\mathfrak H_{\le 2M}$ and divide by the denominator in
\eqref{eq:XD}. 
Recall that Proposition~\ref{prop:cavity} relies on an application of Lemma~\ref{lem:state-harmonic-expansion}: since $h\le2t(D)\le4M$ on this class and $1\le k<K$,
the constants $C_0(h,k,M)$ in
Lemma~\ref{lem:state-harmonic-expansion} have a common upper bound
depending only on $K,M$. By assumption~\eqref{eq:regime}, we have
$d\gg L^2$, so $d\ge C_0(h,k,M)L^2$ holds for all large enough $n$,
uniformly over the class $\mathfrak H_{\le 2M}$. 
  Assume $D\equiv D(x_u, x_{V \setminus u}) \in \calH_{\bf \ell}$ and let $h=\ell_u$. For a multidegree $\mathbf a$, write $k=|\bfa|$,   and recall the notation from \eqref{e:def.q.s}.  
For every $S \subseteq V^c$ with $|S|=k$, if $\mathsf T^{u,S}_{\mathbf a}D\in
\mathfrak H_{\le2M}$, define
\begin{align}
 \beta^{u,S}_{\mathbf a}(D)
 &:=\frac{p^kL^{ A}}{d^h}
    \frac{g(\mathsf T^{u,S}_{\mathbf a}D)
    (np)^{(t(\mathsf T_{\mathbf a}^{u,S}D)-M)_+}}
    {g(D)
    (np)^{(t(D)-M)_+}}\,
    .
 \label{eq:beta-definition}
\end{align}
For every $\mathsf T^{u,S}_{\mathbf a}D$ with
$t(\mathsf T^{u,S}_{\mathbf a}D)>2M,$ define
\begin{equation}
 r_{\mathbf a}^{u,S}(D)
 := 
 \frac{p^{k}L^{ A}}{d^h}
 \frac{\Post(\mathsf T^{u,S}_{\mathbf a}D)}
    {g(D)(np)^{(t(D)-M)_+}}\,.
 \label{eq:terminal-definition}
\end{equation}
The divided cavity recurrence~\eqref{eq:cavity-schematic} then simplifies to
\begin{align} \nonumber
 X(D)
 &\le C
 +\frac{C\phi_{\nu(D)}(\Delta)}
       {pL^{\nu(D)}(np)^{(t(D)-M)_+}} 
 + C
 \sum_{\bfa\in\calA^*_{K,M}(h)}
 \sum_{\substack{S \subseteq V^c: |S|=|\bfa|\\t(\mathsf T^{u,S}_{\mathbf a}D) \le 2M }}
 \beta_{\mathbf a}^{u,S}(D)X(\mathsf T^{u,S}_{\mathbf a}D) \\
 &\qquad +C
 \sum_{\mathbf a\in\calA^*_{K,M}(h)}
\sum_{\substack{S \subseteq V^c: |S|=|\bfa|\\t(\mathsf T^{u,S}_{\mathbf a}D)>2M }}
 r^{u,S}_{\mathbf a}(D)+o(1)\,,
 \label{eq:X-recursion}
\end{align}
where the constant $C=C_{K,M}$ only depends on $K$ and $M$, and the sums omit terms for which $\mathsf T^{u,S}_{\mathbf a}D = 0$.

  For the second term on the right-hand side of \eqref{eq:X-recursion}, recall the definition \eqref{eq:def-nu-D} of $\nu(D)$. In particular, the restriction $t(D)\le 2M$ implies $\nu(D) \le 4M$.   We claim
that there exists a constant $C'$, depending only on $K,M,\delta$, such that  
\[
\frac{\phi_{\nu(D)}(\Delta)}
       {pL^{\nu(D)}(np)^{(t(D)-M)_+}}\le C'p^{-1}\Delta+o(1)
\]
holds uniformly over $D\in \mathfrak{H}_{\le 2M}$.
Recall the definition of $\phi_\nu$ from \eqref{e:def.phi.nu}.
If $\Delta\ge p^2$, then we have
    \[\frac{\phi_{\nu(D)}(\Delta)}{pL^{\nu(D)}(np)^{(t(D)-M)_+}}
    \le \frac{\phi_{\nu(D)}(\Delta)}{p L^{\nu(D)}}
    \lesssim \frac{\Delta}{p}\,,
    \]
where the hidden constant is uniform over $ \nu(D)\le 4M$. On the other hand, 
for all sufficiently large $n$,
the function $\phi_{\nu(D)}$ is nondecreasing on $[0,p^2]$, so for $\Delta\le p^2$ we can bound
\[
\frac{\phi_{\nu(D)}(\Delta)}{pL^{\nu(D)}(np)^{(t(D)-M)_+}}\le \frac{p^2(1+2L)^{\nu(D)}}{pL^{\nu(D)}}\lesssim p=o(1)\,,
\]
again uniformly over $\nu(D)\le4M$. This verifies the claim.

Given the claim, we further define
\begin{align*}
 B_n &:=
 \sup_{D\in\mathfrak H_{\le2M}}
 \sum_{\mathbf a\in\calA^*_{K,M}(h)}
 \sum_{\substack{S \subset V^c: |S|=|\bfa|\\t(\mathsf T^{u,S}_{\mathbf a}D) \le 2M }}
 \beta^{u,S}_{\mathbf a}(D)\,,\\
 R_n &:=\sup_{D\in\mathfrak H_{\le2M}}  \sum_{\mathbf a\in\calA^*_{K,M}(h)}
 \sum_{\substack{S \subset V^c: |S|=|\bfa|\\t(\mathsf T^{u,S}_{\mathbf a}D) > 2M }} r^{u,S}_{\mathbf a}(D).
\end{align*}
Taking the supremum in \eqref{eq:X-recursion} and adjusting $C$ if necessary yield the preliminary
scalar recursion
\begin{equation}
 Z\le C\bigg( 1+ \frac{\Delta}{p}+B_nZ+R_n \bigg)+o(1)\,.
 \label{eq:preliminary-Z-recursion}
\end{equation}
It remains to prove that $B_n=o(1)$, $R_n=o(1)$.

  We begin with the analysis of $B_n$.   The
selection rule in Lemma~\ref{lem:selection-rule} ensures that $s$ (as defined by \eqref{e:def.q.s}) is a nonnegative integer. 
  The multidegree of $T_{\bfa}^{u,S} D$ is characterized by Lemma~\ref{lmm:Tproperty}, and it follows that  
\begin{equation}
2t(T_{\bfa}^{u,S} D) = 2t(D) - h + \|\bfa\|_1
=2(t(D) + s)\,.
\label{eq:degree-transition}
\end{equation}
Again recalling the definition $\nu(D)$ from \eqref{eq:def-nu-D}, we now claim that \begin{equation}
 \nu(\mathsf T_{\mathbf a}^{u,S} D)-\nu(D)\le 2s+1,
 \label{eq:nu-increment}
\end{equation}
Indeed, if neither function is a scalar multiple of $\iprod{x_u}{x_v}$ according to notation in \eqref{eq:def-nu-D}, then the left-hand side of
\eqref{eq:nu-increment} equals $2t(\mathsf T_{\mathbf a}^{u,S}D)-2t(D)=2s$ in view of~\eqref{eq:degree-transition}.  If exactly one belongs
to this one-dimensional subspace, the definition of $\nu$ gives \eqref{eq:nu-increment}; if both do, then \eqref{eq:nu-increment} holds trivially since the left-hand side is zero.
Combining \prettyref{eq:nu-increment} with
the contraction bound
\eqref{eq:transition-scales} 
from \prettyref{lem:state-harmonic-expansion}
gives 
\begin{equation}
 \frac{g(\mathsf T^{u,S}_{\mathbf a}D)}{g(D)}
 \stackrel{\eqref{eq:gD}}{=}
 \frac{
 \E[(\mathsf T^{u,S}_{\mathbf a}D)^2]
 L^{\nu(\mathsf T^{u,S}_{\mathbf a}D)}
 }{\E(D^2)
 L^{\nu(D)}}
 \stackrel{\eqref{eq:transition-scales} }{\le} C_{K,M}\frac{L^{2s+1}}{d^s}\,.
 \label{eq:g-ratio}
\end{equation}
Moreover, since the map $x\mapsto(x-M)_+$ is nondecreasing and $1$-Lipschitz, we have
\[
 0\le
 (t(\mathsf T^{u,S}_{\mathbf a}D)-M)_+-(t(D)-M)_+
 \le s.
\]
Again recalling \eqref{e:def.q.s}, substituting the above two displayed inequalities
directly into \eqref{eq:beta-definition}, and then summing over the  at most $n^k$ choices of $S$, gives
    \begin{align}\nonumber
 \beta_{\mathbf a}(D)
 &:= \sum_{\substack{S \subset V^c: |S|=|\bfa|\\t(\mathsf T^{u,S}_{\mathbf a}D) \le 2M }}
  \beta^{u,S}_{\mathbf a}(D) 
   \le n^k
  \cdot 
  \frac{p^k L^A}{d^h} \cdot 
  C_{K,M} \frac{L^{2s+1}}{d^s} \cdot 
  (np)^s \\
  & 
  = C_{K,M} 
  \bigg(\frac{(npL)^3}{d}\bigg)^q
  \frac{L^{q-3h+1}}{ (np)^{3q-k-s}}
  \stackrel{\eqref{e:def.L.eps}}{=}
   C_{K,M}
 \frac{(\varepsilon_n)^qL^{q-3h+1}}{(np)^{3q-k-s}}\,.
 \label{eq:beta-explicit}
\end{align}
Recalling Lemma~\ref{lem:selection-rule},
the selection rule $k\le { A}=2q-h$ gives
\begin{equation}\label{e:threeqminusk}
 3q-k\ge q+h=s+2h\ge s+2\,,
\end{equation}
 because $h\ge1$   by the assumption $D\in\mathfrak H_{\le 2M}$ (recall from \eqref{eq:bounded-harmonic-class} that we require $\ell_v\in\naturals$ for all $v$, including $\ell_u=h$).   In addition,   since $\bfa\in\calA^*_{K,M}(h)$ as defined by \eqref{eq:calA.star.K.M.h} and \eqref{e:def.calA.h}, we must have $q\le M$, and therefore  
 \[
  q-3h+1\le q-2\le M-2 \le M\,,
 \]
 Since $\varepsilon_n=o(1)$ and $L\ge1$, it follows that
 \begin{equation}
  \beta_{\mathbf a}(D)
 \le C_{K,M}\frac{L^M}{(np)^2} \,. 
  \label{eq:beta-bound}
 \end{equation}
Let $N_{K,M}: =\max_{1\le h\le 4M}|\calA^*_{K,M}(h)|$.
Then, substituting \eqref{eq:beta-bound}
into the above definition of $B_n$ gives
\begin{equation}
  B_n
 \le N_{K,M}C_{K,M}\frac{L^M}{(np)^2}
 \le C_{K,M}\frac{L^M}{(np)^2}
 =:\theta_n=o(1)\,,
 \label{eq:an}
\end{equation}
where the last estimate follows from $np\ge n^\delta$ and $L\le\log n$.

  We now turn to the analysis of $R_n$.   If $t(\mathsf T^{u,S}_{\mathbf a}D)>2M$, by the conditional Jensen's inequality and the contraction bound \eqref{eq:transition-scales} from Lemma~\ref{lem:state-harmonic-expansion}, we have
\[
 \Post(\mathsf T^{u,S}_{\mathbf a}D)
 \le \E[(\mathsf T^{u,S}_{\mathbf a}D)^2]
 \le \frac{C}{d^s}\E(D^2)\,.
\]
Substituting this estimate into \eqref{eq:terminal-definition}, and again recalling \eqref{eq:gD} and \eqref{e:def.q.s}, we obtain
\begin{align}
 r_{\mathbf a}^{u,S}(D)
 &\le
 \frac{1}{pL^{\nu(D)}
                   (np)^{(t(D)-M)_+}} \cdot
 \frac{p^kL^{ A}}{d^h} \cdot \frac{C_{K,M}}{d^s}
    =\frac{C_{K,M}p^kL^{2q-h-\nu(D)}}
         {d^q p(np)^{(t(D)-M)_+}}\,.
 \label{eq:terminal-summand-first}
\end{align}
Summing over the at most $n^k$ choices of $S$ gives
\begin{align}
 r_{\mathbf a}(D) &:=
 \sum_{\substack{S \subset V^c: |S|=|\bfa|\\t(\mathsf T^{u,S}_{\mathbf a}D) > 2M }} r^{u,S}_{\mathbf a}(D)
   \le
 \frac{C_{K,M} (npL)^{3q} L^{-q-h-\nu(D)}}
    {d^q p (np)^{3q-k + (t(D)-M)_+} }
   \stackrel{\eqref{e:def.L.eps}}{=}
 \frac{C_{K,M} (\varepsilon_n)^q
       L^{-q-h-\nu(D)}}
      {p(np)^{3q-k+(t(D)-M)_+}} \\ 
      &\stackrel{\eqref{e:threeqminusk}}{\le}
 \frac{C_{K,M}}
      {p(np)^{s+2+(t(D)-M)_+}}
      \stackrel{\eqref{eq:degree-transition}}{\le} \frac{C_{K,M}}
      {p(np)^{M+3}}\,.
 \label{eq:terminal-summand}
\end{align}
Hence, summing
\eqref{eq:terminal-summand} over
$\calA^*_{K,M}$ gives 
\begin{equation}
 \sum_{\mathbf a\in\calA^*_{K,M}(h)} r_{\mathbf a}(D)
 \le \frac{N_{K,M} C_{K,M}}{p(np)^{M+3}}
 =\frac{C_{K,M}n}{(np)^{M+4}}
 =o(1)\,,
 \label{eq:terminal-bound}
\end{equation}
  using the last assumption in \eqref{eq:cutoff-conditions}.   This proves $R_n=o(1)$.

Plugging these estimates into \eqref{eq:preliminary-Z-recursion}, we obtain that
\[
Z\le C\bigg(1+\frac{\Delta}{p}+ {  \theta_n}Z+o(1)\bigg)+o(1)\,,
\]
  which implies \eqref{eq:closed-envelope}. Since we noted above that $Z<\infty$, this in turn implies  
\[
Z\le C\bigg(1+\frac{\Delta}{p}\bigg) 
\]
for some adjusted constant $C$. This concludes the proof.
\end{proof}


With~\prettyref{prop:closure}, we are now ready to complete the proof of Theorem~\ref{thm:posterior-components}.
\begin{proof}[\hypertarget{proof:t.posterior-components}{Proof of Theorem~\ref{thm:posterior-components}}] Fix a component $D_{\bfa}$ in the expansion \eqref{eq:harmonic-expansion} from Theorem~\ref{thm:harmonic-expansion}. If $D_{\bfa}=0$, then the desired bound is immediate, so assume
$D_{\bfa}\ne0$. By Theorem~\ref{thm:harmonic-expansion},
$D_{\bfa}\in\calH_{\bfa}$, and
    \[
    q=\frac{\|\bfa\|_1}{2}=t(D_{\bfa}) \le M\,,
    \]
so $D_{\bfa}$ lies in the set $\mathfrak H_{\le2M}$ defined by
\eqref{eq:bounded-harmonic-class}.
In particular, the factor $(np)^{(t(D_{\bfa})-M)_+}$ in
\eqref{eq:XD} equals $1$, so we have simply
$\Post(D_{\bfa}) =X(D_{\bfa})g(D_{\bfa})$. We now compare the factor $g(D_{\bfa})$, which we recall is defined by \eqref{eq:gD}, with the factor $g_t$ defined by \eqref{eq:gt}. We have two cases:
\begin{itemize}
    \item If $q=1$, it falls into the case
    \eqref{eq:critical-component} of Theorem~\ref{thm:harmonic-expansion}, which
    shows that
$\bfa=(1,1)$ and
$D_{\bfa}(x_1,x_2)=\iprod{x_1}{x_2}$. Thus
$\nu(D_{\bfa})=1=\nu_1$ by \eqref{eq:gt} and \eqref{eq:def-nu-D}, and
$\E[(D_{\bfa})^2]=1/d$. Therefore
\[ g(D_{\bfa})
 \stackrel{\eqref{eq:gD}}{=}\E[(D_{\bfa})^2]\,pL^{\nu(D_{\bfa})}
  =
  \frac{pL}{d}
  \stackrel{\eqref{eq:gt}}{=}g_1\,.
\]
    \item Otherwise, if $q\ge2$, then $\nu(D_{\bfa})=2q=\nu_q$ by \eqref{eq:gt} and \eqref{eq:def-nu-D}. The contraction bound in
\eqref{eq:harmonic-component-scales} from Theorem~\ref{thm:harmonic-expansion} yields
\[g(D_{\bfa})
 \stackrel{\eqref{eq:gD}}{=}
  \E[(D_{\bfa})^2]\,pL^{\nu(D_{\bfa})}
  \stackrel{\eqref{eq:harmonic-component-scales}}{\le} C_{K,M}\frac{pL^{2q}}{d^q}
  \stackrel{\eqref{eq:gt}}{=}
  C_{K,M}g_q\,.
\]
\end{itemize}
In either case, combining with the result of 
Proposition~\ref{prop:closure} gives
\begin{equation}\label{eq:thm5-component-intermediate}
  \Post(D_{\bfa})
  =X(D_{\bfa})g(D_{\bfa})
  \le C_{\delta,K,M}g_q
  \bigg(1+\frac{\Delta}{p}\bigg)\,,
\end{equation}
thereby concluding the proof.
\end{proof}

\section{The high-frequency Fourier  remainder}\label{sec:high-level-fourier}

This section isolates the Fourier-tail estimates shared by Theorem~\ref{thm:Fourier-reduction} and   Lemma~\ref{lem:weighted-fourier}. 
  The main results of this section are Propositions \ref{p:high-Fourier-tail} and \ref{p:bounding-B} below.

We first introduce some notation.
For integers $0\le k\le m$ and $0\le r\le m$, define
\begin{equation}\label{e:weight-w}
w_{m,k,r}
:=
\mathbb{P}\Big(
\operatorname{Bin}(k,1-p)+\operatorname{Bin}(m-k,p)\le r
\Big)\,,
\end{equation}
where the two binomial random variables are independent. Then set
\begin{equation}\label{e:def.B.m.K.r}
B_{m,K,r}
:=
\sum_{k=K}^m
\binom{m}{k}
w_{m,k,r}\,
\mathbb{E}_{\rho}[|\eta_2(\rho)|^k]\,,
\end{equation}
where $\rho=\langle z,z'\rangle$ for independent $z,z'\sim \operatorname{Unif}(\mathbb{S}^{d-1})$, and we identify $\eta_2(z,z')$
(as defined by \eqref{eq:eta})
with $\eta_2(\langle z,z'\rangle)=\eta_2(\rho)$.  Define the set
\begin{equation}\label{e:def.set.T.m.r}
T_{m,r}:=\Big\{y\in \{0,1\}^m:\sum y_i\le r\Big\}\,.
\end{equation}
Recall from~\eqref{eq:eta-DS} that 
\begin{equation}\label{eq:eta-DS.review}
\eta_{D,S}=\eta_{D,S}(X_V,X_S)
:=
\mathbb{E}_z\bigg[
D(z,X_V)
\prod_{j\in S}
\kappa(\langle z,x_j\rangle)
\bigg]\,.
\end{equation}
Recall that $Q_m$ denotes the reference measure $\operatorname{Bern}(p)^{\otimes m}$ on $\{0,1\}^m$, and recall that $(\psi_S)_S$ is the corresponding Fourier basis defined by \eqref{eq:fourierbasis}.

\begin{proposition}[weighted high-frequency tail]\label{p:high-Fourier-tail} For $m\le n$, $h\ge 0$, and $V\subseteq [n]\setminus[m]$, let
$D(z,X_V)$ be a spherical harmonic
  (as in \eqref{eq:pure-block})   in the variables $z$ and
$X_V=\{x_v:v\in V\}$, with spherical-harmonic degree $h$ in $z$. When $h=0$, we allow $V=\emptyset$
and $D\equiv 1$. 
If $\mathcal F$ is any sub-$\sigma$-field of $\sigma(X_V,X_{[m]})$, then we have
\begin{equation}
\mathbb{E}_{\mathcal F}\bigg[
\int_{T_{m,r}}
\bigg(
\sum_{\substack{S\subseteq[m], |S|\ge K}}
\mathbb{E}(\eta_{D,S} \mid \mathcal F) \psi_S(y)
\bigg)^2
\,dQ_m(y)\bigg]
\le
\Expect(D^2) B_{m,K,r}
\label{e:high-frequency-tail}
\end{equation}
for $B_{m,K,r}$ as in \eqref{e:def.B.m.K.r},
$T_{m,r}$ as in \eqref{e:def.set.T.m.r}, and
$\eta_{D,S}$ as in \eqref{eq:eta-DS.review}.
\end{proposition}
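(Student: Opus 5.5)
The approach is to remove the conditioning by conditional Jensen applied pointwise in $y$, then average over the latent points $X_{[m]}$ so that all cross terms between distinct Fourier levels vanish, and finally evaluate the truncated integral of $\psi_S^2$ and the resulting $z$-quadratic form explicitly.

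\emph{Step 1 (removing the conditioning).} For fixed $y\in\{0,1\}^m$ define
\[
F(y):=\sum_{S\subseteq[m],\,|S|\ge K}\eta_{D,S}\,\psi_S(y)
=\E_z\Big[D(z,X_V)\,\Phi_K(z,y)\Big],
\qquad
\Phi_K(z,y):=\sum_{|S|\ge K}\prod_{j\in S}\kappa(\ip{z}{x_j})\,\psi_S(y).
\]
This $F(y)$ is a function of $(X_V,X_{[m]})$. The sum inside the left side of \eqref{e:high-frequency-tail} equals $\E(F(y)\mid\mathcal F)$ by linearity, since the sum over $S$ is finite. Conditional Jensen, applied for each $y$, then bounds the left side by $\int_{T_{m,r}}\E[F(y)^2]\,dQ_m(y)$. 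Here the expectation is under the product uniform prior on $X_V,X_{[m]}$, so the $\sigma$-field $\mathcal F$ disappears.

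\emph{Step 2 (averaging over $X_{[m]}$).} Write $F(y)^2=\E_{z,z'}[D(z,X_V)D(z',X_V)\Phi_K(z,y)\Phi_K(z',y)]$ with $z,z'$ independent uniform. Fix $z,z',X_V$ and integrate over the independent uniform $x_j$, $j\in[m]$. Since $\E_x\kappa(\ip{z}{x})=0$, every term of $\Phi_K(z,y)\Phi_K(z',y)$ with $S\ne S'$ vanishes. The diagonal terms give $\E_x[\kappa(\ip zx)\kappa(\ip{z'}x)]=\eta_2(\rho)$ with $\rho=\ip z{z'}$, by \eqref{eq:eta}. Hence
\[
\E_{X_{[m]}}[\Phi_K(z,y)\Phi_K(z',y)]=\sum_{|S|\ge K}\eta_2(\rho)^{|S|}\psi_S(y)^2 .
\]
Next I evaluate the truncated integral of $\psi_S^2$. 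Under $\Bern(p)$, the weight $(y_i-p)^2/(p(1-p))$ puts mass $1-p$ on $y_i=1$ and mass $p$ on $y_i=0$. So $\int_{T_{m,r}}\psi_S^2\,dQ_m$ is exactly the probability that $\operatorname{Bin}(|S|,1-p)+\operatorname{Bin}(m-|S|,p)\le r$, which is $w_{m,|S|,r}$ from \eqref{e:weight-w}. Collecting the $\binom mk$ sets of each size $k$, the bound becomes
\[
\sum_{k\ge K}\binom mk w_{m,k,r}\,\E_{X_V}\E_{z,z'}\big[D(z,X_V)D(z',X_V)\eta_2(\ip z{z'})^k\big].
\]

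\emph{Step 3 (Funk--Hecke).} For fixed $X_V$, the function $z\mapsto D(z,X_V)$ lies in $\calH_h$, and $\eta_2(\ip z{z'})^k$ is a zonal kernel. By the Funk--Hecke formula (equivalently \eqref{eq:reproducing-identity} together with \eqref{e:addition.formula}), this kernel acts on $\calH_h$ as multiplication by $\lambda_{h,k}=\E_\rho[\eta_2(\rho)^kP_h(\rho)]$. The inner expectation is therefore $\lambda_{h,k}\E_z[D(z,X_V)^2]$. Since $|P_h|\le P_h(1)=1$ on $[-1,1]$, we get $|\lambda_{h,k}|\le\E_\rho|\eta_2(\rho)|^k$. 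Averaging over $X_V$ gives the factor $\E(D^2)$, and summing over $k$ yields exactly $\E(D^2)B_{m,K,r}$. The case $h=0$, $D\equiv1$ is the same computation with $\lambda_{0,k}=\E\eta_2^k$.

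\emph{Main obstacle.} The step needing the most care is Step 2. One must justify exchanging $\E_z$, $\E_{z'}$, $\E_{X_{[m]}}$ and the finite sums, which is fine because $\kappa$ is bounded. One must also check that the truncation to $T_{m,r}$ does not spoil the vanishing of cross terms. It does not, because the cross terms vanish already after averaging over $X_{[m]}$ for each fixed $y$, before integrating in $y$. This ordering is exactly what allows the truncation to be kept, in contrast with the Parseval argument used for the low frequencies.
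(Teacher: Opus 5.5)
Your proposal is correct and follows essentially the same route as the paper. The steps match: conditional Jensen to remove $\mathcal F$; vanishing of the $S\neq S'$ terms after averaging over $X_{[m]}$ for each fixed $y$, which is why the truncation to $T_{m,r}$ survives; identification of $\int_{T_{m,r}}\psi_S^2\,dQ_m$ with $w_{m,|S|,r}$; and the replica plus Funk--Hecke identity giving $\E[D^2]\,\E_\rho[P_h(\rho)\eta_2(\rho)^k]$, bounded using $|P_h|\le 1$. The only cosmetic difference is ordering: you introduce the replica $z'$ before averaging out $X_{[m]}$, whereas the paper first kills cross terms via $\E[\eta_{D,S}\eta_{D,T}]=0$ and then computes $\E[\eta_{D,S}^2]$.
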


\begin{proposition}[uniform scalar Fourier-tail bound]\label{p:bounding-B}
Fix two integers $K\ge 2$ and $B_*\ge 0$. There exists a constant $C$ such that for all sufficiently large $n$, uniformly over $m\le n$ and
\[
r
\le
\min\left\{
m,\,
\lceil 2mp+100\log n\rceil+B_*
\right\},
\]
we have
\[
B_{m,K,r}
\le
C
\left(
\left(
\frac{npL}{\sqrt{dK}}
\right)^K
+
\left(
\frac{npL^2}{d}
\right)^K
+
\exp\left(
CnpL-\frac{d}{CL^2}
\right)\right).
\]
In particular, since $np\ge n^{\delta}$ and $d\gg (npL)^3$, for any fixed $K>2/\delta$, we have $B_{m,K,r}=o(1/n)$. 
\end{proposition}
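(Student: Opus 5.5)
The plan is to handle the three ingredients of $B_{m,K,r}$ separately: a pointwise estimate for $\eta_2(\rho)$, moment bounds for $|\eta_2(\rho)|^k$ under the law of $\rho$, and a combinatorial bound on $\binom mk w_{m,k,r}$. Then sum over $k$, with the sum split at a level $k_0\asymp np+\log n$ comparable to $r$.

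\textbf{Pointwise estimate for $\eta_2$.} By definition,
\[
\eta_2(\rho)=\frac{\Pbb_z(\ip z{x}\ge\tau,\ip z{x'}\ge\tau)-p^2}{p(1-p)},\qquad \rho=\ip x{x'},
\]
so $|\eta_2|\le 1$ trivially, with $\eta_2(1)=1$. The key analytic step is the bound
\[
|\eta_2(\rho)|\le C\bigl(pL|\rho|+pL^2\rho^2\bigr)\qquad\text{for }|\rho|\le c/L .
\]
I would prove it by writing the cap intersection probability as a function of $\rho$ and differentiating. The derivative is governed by the joint density of $(\ip z x,\ip z{x'})$ on the corner $\{u,v\ge\tau\}$. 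Here $\tau\asymp\sqrt{L/d}$, so the first derivative at $\rho=0$ is of order $d\tau^2p^2/p\asymp pL$. The second derivative stays of order $pL^2$ on the window $|\rho|\le c/L$, because in this window the Gaussian-like tilt $e^{d\tau^2\rho/(1+\rho)}$ is bounded. This step is consistent with $|\gamma_{(1,1)}|^2\asymp p^2L^2$ from Theorem~\ref{thm:harmonic-expansion}. I expect it to be the main technical obstacle, since it requires uniform control of spherical cap intersections near the threshold, including the $1/\sqrt d$ corrections when $L$ is not small compared with $d$.

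\textbf{Moments of $\eta_2$.} Split $\E_\rho|\eta_2(\rho)|^k$ according to whether $|\rho|\le c/L$. For the main part, use $\E|\rho|^k\le (Ck/d)^{k/2}$, which follows from hypercontractivity (Lemma~\ref{lem:uniform-localization}) or directly from the Beta law of $\rho$. This gives
\[
(CpL\sqrt{k/d})^k+(CpL^2k/d)^k .
\]
For the complement, use $|\eta_2|\le1$ together with $\Pbb(|\rho|>c/L)\le 2\exp(-d/(CL^2))$.

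\textbf{Summing over $k$.} For $K\le k\le k_0$, bound $w_{m,k,r}\le1$ and $\binom mk\le(en/k)^k$. The main part then contributes
\[
\Bigl(\tfrac{CnpL}{\sqrt{dk}}\Bigr)^k+\Bigl(\tfrac{CnpL^2}{d}\Bigr)^k .
\]
Both are geometrically decreasing in $k$ because $npL/\sqrt d\to0$, so the sum is dominated by its $k=K$ term. This yields the first two terms of the proposition.

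For the remaining contributions, namely all $k$ in the tail region $|\rho|>c/L$ and all $k>k_0$ in the main region, I would use the estimate
\[
\binom mk w_{m,k,r}\le\binom mk\binom kr p^{k-r}\le \frac{m^r}{r!}\cdot\frac{(mp)^{k-r}}{(k-r)!} .
\]
This sums over $k$ to at most $e^{mp}(em/r)^r\le\exp(CnpL)$, using $r\lesssim np+\log n$ and $\log(m/r)\lesssim L$. Multiplying by the tail probability gives the term $\exp(CnpL-d/(CL^2))$. For $k>k_0$ in the main region, $|\eta_2|\le CpL\sqrt{k/d}\cdot O(1)$ with high probability, and the factor $(pL\sqrt{k/d})^{k}$ beats $\frac{(mp)^{k-r}}{(k-r)!}\,m^r/r!$. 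Indeed, the ratio behaves like $\bigl(np\cdot pL\sqrt{k/d}/(k/e)\bigr)^{k}$ up to the $m^r$ factor, and the $(pL/\sqrt d)^r$ part absorbs $m^r$. So this piece is also dominated by the first term.

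\textbf{Conclusion.} Finally, since $d\gg(npL)^3$, $np\ge n^\delta$ and $K>2/\delta$:
\begin{itemize}
\item the first term is $\bigl((npL)^2/d\bigr)^{K/2}\le (np)^{-K/2}=o(1/n)$;
\item the second term is at most the first;
\item the exponential term is $\exp\bigl(CnpL-(np)^3L/C\bigr)$, which is superpolynomially small.
\end{itemize}
Hence $B_{m,K,r}=o(1/n)$.
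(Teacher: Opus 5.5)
Your argument is correct after one repair. It handles the main region by a more elementary summation than the paper's.

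\textbf{The repair.} The pointwise bound $|\eta_2(\rho)|\le C(pL|\rho|+pL^2\rho^2)$ is false on the sphere: its right side vanishes at $\rho=0$, but $\eta_2(0)\ne0$. Indeed $\eta_2=\sum_{\ell\ge1}w_\ell P_\ell$ with $w_\ell=\alpha_\ell^2/N_\ell$, so $\eta_2(0)=\sum_{j\ge1}w_{2j}P_{2j}(0)$. This sum is dominated by $w_2P_2(0)=-w_2/(d-1)$, and $w_2\asymp pL^2$, so $|\eta_2(0)|\asymp pL^2/d$. Your Taylor expansion at $\rho=0$ omits this constant term, which vanishes only in the Gaussian model. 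The correct estimate, which the paper imports from \cite{du2026resolution} rather than proving, is $|\eta_2|\le CpL|\rho|+CpL^2(\rho^2+d^{-1})$ on $\{|\rho|+d^{-1/2}\le c_0/L\}$. The extra term contributes $\binom mk(CpL^2/d)^k\le (CnpL^2/(kd))^k$, which is absorbed into your second term, so nothing downstream changes.

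\textbf{Comparison of routes.} On the region $|\rho|\lesssim 1/L$, both you and the paper use only $w_{m,k,r}\le 1$.
\begin{itemize}
\item The paper packages the $k$-sum via the envelope $F^{(\ge K)}_{m,r}(u)\le (nu)^Ke^{nu}/K!$. Since $n|\eta_2|$ is unbounded there, it must also show $n|\eta_2|\le 2\varepsilon d\rho^2+1$ and invoke the exponential moment $\sup_d\E e^{c d\rho^2}<\infty$ with Cauchy--Schwarz.
\item You instead sum termwise, letting $\binom mk\le (en/k)^k$ absorb the moment growth $\E|\rho|^k\le (Ck/d)^{k/2}$. This avoids exponential moments altogether.
\end{itemize}
Your termwise bound is already a geometric series over \emph{all} $k\ge K$. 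So the cutoff $k_0$ and your paragraph on $k>k_0$ in the main region are superfluous; as phrased via ``with high probability'', that paragraph is also not a valid moment bound. Simply drop them.

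On the complement $|\rho|\gtrsim 1/L$ the two arguments coincide. This is the only place the degree-truncation weight matters: it replaces $2^m$ by $\sum_k\binom mk w_{m,k,r}=\sum_{s\le r}\binom ms\le e^{CnpL}$. Your union-bound derivation of this works once you treat $k<r$ separately. There $\binom kr p^{k-r}=0$ while $w_{m,k,r}$ can be close to $1$, and $\sum_{k<r}\binom mk\le (em/r)^r$ suffices.
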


\begin{proof}[Proof of Proposition~\ref{p:high-Fourier-tail}]
By conditional Jensen, the left-hand side of \eqref{e:high-frequency-tail} is bounded by
\[
\mathbb{E}\left[
\int_{T_{m,r}}
\Bigg(
\sum_{\substack{S\subseteq[m]\\ |S|\ge K}}
\eta_{D,S} \psi_S(y)
\Bigg)^2
\,dQ_m(y)\right]=\sum_{\substack{S,T\subset [m]\\|S|,|T|\ge K}}\int_{T_{m,r}}\mathbb{E}\left[\eta_{D,S}\eta_{D,T}\right]\psi_S(y)\psi_T(y)dQ_m(y)\,.
\]
For $S\neq T\subseteq[m]$, taking expectation over any coordinate $x_i$ with $i\in S\triangle T$ leaves one centered cap factor and hence $\expect{\eta_{D,S}\eta_{D,T}}=0$. Thus, it remains to consider the diagonal terms $S=T.$ 

For $S=T$, since $X_{[m]},X_V$ are independent of $y$, the integrand factorizes as
\[
\int_{T_{m,r}}\psi_S(y)^2dQ_m(y)\times \mathbb{E}[\eta_{D,S}^2]\,.
\]
The first integral equals precisely $w_{m,|S|,r}$ defined in \eqref{e:weight-w}, since
under the tilted measure $\psi_S^2\,dQ$, the coordinates in
$S$ are independent $\operatorname{Bern}(1-p)$ variables, while the
coordinates outside $S$ remain independent $\operatorname{Bern}(p)$
variables. For the second factor, we claim that
\begin{equation}\label{e:second-moment-eta-DS}
\mathbb{E}_{X_V,X_{[m]}}[\eta_{D,S}^2]=\mathbb{E}[D^2]\mathbb{E}_{\rho}\left[P_{h,d}(\rho)\eta_2(\rho)^{|S|}\right]\,,
\end{equation}

Given this claim and the above bounds, the left-hand-side of \eqref{e:high-frequency-tail} is bounded by
\[
\sum_{\substack{S\subset [m]\\|S|\ge K}}w_{m,|S|,r}\times \mathbb{E}[D^2]\mathbb{E}_{\rho}\left[P_{h,d}(\rho)\eta_2(\rho)^{|S|}\right]=\mathbb{E}[D^2]\sum_{k=K}^m\binom{m}{k}w_{m,k,r}\mathbb{E}_{\rho}\left[P_{h,d}(\rho)\eta_2(\rho)^{k}\right]\,,
\]
which is in turn bounded by $\mathbb{E}[D^2]B_{m,K,r}$ since $|P_{h,d}(\rho)|\le 1$. This proves \eqref{e:high-frequency-tail}. 

We now verify the identity in \eqref{e:second-moment-eta-DS}.
Introduce an independent replica $z'$ of $z$. By definition,
\[
\eta_{D,S}^2
=
\mathbb{E}_{z,z'}
\left[
D(z,X_V)D(z',X_V)
\prod_{j\in S}
\kappa(\langle z,x_j\rangle)
\kappa(\langle z',x_j\rangle)
\right].
\]
Averaging first over $X_{[m]}$, and using the independence of the variables $x_j$, gives
\[
\mathbb{E}_{X_{[m]}}
\left[
\prod_{j\in S}
\kappa(\langle z,x_j\rangle)
\kappa(\langle z',x_j\rangle)
\right]
=
\eta_2(z,z')^{|S|}.
\]
By rotational invariance, $\eta_2(z,z')$ depends on $z,z'$ only through
$\rho=\langle z,z'\rangle$, so we write it as $\eta_2(\rho)$. Hence
\begin{equation}\label{e:special-identity}
\mathbb{E}_{X_V,X_{[m]}}\big[\eta_{D,S}^2\big]
=
\mathbb{E}_{z,z',X_V}
\left[
D(z,X_V)D(z',X_V)
\eta_2(\rho)^{|S|}
\right].
\end{equation}

It therefore remains to prove the following general identity: for any bounded function $f$,
\begin{equation}\label{e:general-identity}
\mathbb{E}_{z,z',X_V}
\left[
D(z,X_V)D(z',X_V)f(\langle z,z'\rangle)
\right]
=
\mathbb{E}[D^2]\,
\mathbb{E}_{\rho}
\left[
P_{h,d}(\rho)f(\rho)
\right].
\end{equation}
To see this, let $(Y_a)_{a=1}^{N_{h,d}}$ be an orthonormal basis of $\calH_{h,d}$. Since $z \mapsto D(z,X_V) \in \calH_h$, we may write
\begin{equation}
D(z,X_V)
=
\sum_{a=1}^{N_{h,d}}Y_a(z)F_a(X_V).
\label{eq:D-expandxu}
\end{equation}
Therefore the left-hand side of~\eqref{e:general-identity} equals
\begin{equation}\label{e:expansion}
\sum_{a,b=1}^{N_{h,d}}
\mathbb{E}[F_aF_b]\,
\mathbb{E}_{z,z'}
\left[
Y_a(z)Y_b(z')f(\langle z,z'\rangle)
\right].
\end{equation}
For fixed $z'$, the Funk--Hecke formula~\cite[Theorem 1.2.9]{dai2013approximation} gives
\[
\mathbb{E}_z
\left[
f(\langle z,z'\rangle)Y_a(z)
\right]
=
\lambda_h(f)Y_a(z'),
\qquad
\lambda_h(f)
=
\mathbb{E}_{\rho}
\left[
P_{h,d}(\rho)f(\rho)
\right].
\]
Consequently, by orthonormality,
\[
\mathbb{E}_{z,z'}
\left[
Y_a(z)Y_b(z')f(\langle z,z'\rangle)
\right]
=
\lambda_h(f)\,
\mathbb{E}_{z'}[Y_a(z')Y_b(z')]
=
\lambda_h(f)\mathbf{1}\{a=b\}.
\]
Substituting into~\eqref{e:expansion} yields
\[
\mathbb{E}_{z,z',X_V}
\left[
D(z,X_V)D(z',X_V)f(\langle z,z'\rangle)
\right]
=
\lambda_h(f)
\sum_{a=1}^{N_{h,d}}\mathbb{E}[F_a^2].
\]
Finally, orthonormality once again gives
\[
\sum_{a=1}^{N_{h,d}}\mathbb{E}[F_a^2]
=
\mathbb{E}_{z,X_V}[D(z,X_V)^2]
=
\mathbb{E}[D^2].
\]
This proves~\eqref{e:general-identity}. Taking
$
f(\rho)=\eta_2(\rho)^{|S|}
$
in \eqref{e:general-identity} and using~\eqref{e:special-identity} proves~\eqref{e:second-moment-eta-DS}.
\end{proof}

We now turn to the proof of Proposition~\ref{p:bounding-B}. 
The sum in $B_{m,K,r}$ is naturally encoded by a generating function. Define
\[
F_{m,r}(u):=\sum_{k=0}^{m} \binom {m}k w_{m,k,r} u^k,
\qquad
F_{m,r}^{(\ge K)}(u):=\sum_{k=K}^{m}\binom {m}k w_{m,k,r} u^k.
\]
Then
\[
B_{m,K,r} =\sum_{k=K}^{m}\binom {m}k w_{m,k,r}
\E_{\rho}\big[|\eta_2(\rho)|^k\big]
=
\E_{\rho}\left[F_{m,r}^{(\ge K)}(|\eta_2(\rho)|)\right].
\]
We will use the following elementary properties of $F_{m,r}$ and $F_{m,r}^{(\ge K)}$.

\begin{lemma}[Degree-weight generating function]
\label{lem:degree-weight-generating}
For any $r\le m\le n$ and every $0\le u\le1$,
\begin{equation}
F_{m,r}(u)
=
\sum_{s=0}^r\binom {m}s
\bigl(p+(1-p)u\bigr)^s\bigl(1-p+pu\bigr)^{m-s}.
\label{eq:degree-weight-generating}
\end{equation}
Moreover,
\begin{equation}
F_{m,r}^{(\ge K)}(u)
\le
\frac{(nu)^K}{K!}e^{nu},
\qquad
F_{m,r}(u)\le e^{nu}.
\label{eq:degree-weight-envelope}
\end{equation}
Finally, for all sufficiently large $n$ and $r\le \lceil2np+100\log n\rceil+B_*$, 
\begin{equation}
F_{m,r}(1)
=
\sum_{s=0}^r\binom {m}s
\le
\exp\bigl(Cnp\log(1/p)\bigr).
\label{eq:degree-weight-entropy}
\end{equation}
\end{lemma}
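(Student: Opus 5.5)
The plan is to prove the three claims in order, using the probabilistic meaning of $w_{m,k,r}$ for the first and elementary estimates for the other two.

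\textbf{Identity \eqref{eq:degree-weight-generating}.} I would interpret $\binom{m}{k}u^k$ as a sum over subsets. Let $y\sim Q_m=\Bern(p)^{\otimes m}$, and recall from the proof of Proposition~\ref{p:high-Fourier-tail} that $w_{m,|S|,r}=\int_{T_{m,r}}\psi_S^2\,dQ_m$. Then
\[
F_{m,r}(u)=\sum_{S\subseteq[m]}u^{|S|}\int_{T_{m,r}}\psi_S^2\,dQ_m=\int_{T_{m,r}}\prod_{i=1}^m\bigl(1+u\,\psi(y_i)^2\bigr)\,dQ_m(y).
\]
Since $\psi(1)^2=(1-p)/p$ and $\psi(0)^2=p/(1-p)$, each factor times its $Q$-weight is $p+(1-p)u$ when $y_i=1$ and $1-p+pu$ when $y_i=0$. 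Summing over $y$ with $\sum_i y_i=s\le r$ gives $\binom{m}{s}(p+(1-p)u)^s(1-p+pu)^{m-s}$, which proves the identity.

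\textbf{Envelopes \eqref{eq:degree-weight-envelope}.} Bound $w_{m,k,r}\le1$. Then $F_{m,r}(u)\le\sum_k\binom mk u^k=(1+u)^m\le e^{mu}\le e^{nu}$. For the tail, use $\binom mk\le m^k/k!$:
\[
F^{(\ge K)}_{m,r}(u)\le\sum_{k\ge K}\frac{(nu)^k}{k!}\le\frac{(nu)^K}{K!}\sum_{j\ge0}\frac{(nu)^j}{j!}=\frac{(nu)^K}{K!}e^{nu},
\]
where the middle step uses $k!\ge K!\,(k-K)!$.

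\textbf{Entropy bound \eqref{eq:degree-weight-entropy}.} Setting $u=1$ in the identity gives $\sum_{s\le r}\binom ms$. This is at most $(r+1)(em/r)^r$, or at most $2^m$ if $r=m$. With $r\le 2np+100\log n+B_*+1$ and $np\ge n^\delta$, we get $r\le C np$ because $\log n\le np$ for large $n$. Also $\log(em/r)\le\log(e n/r)$, and $r\ge$ some positive value can be assumed; if $r$ is small the bound is trivial. The function $r\log(en/r)$ is increasing for $r\le n$, so $r\log(en/r)\le Cnp\log(e/(Cp))\le C'npL$ for $p$ small. Finally, $\log(r+1)\le npL$.

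The only delicate point is monotonicity when $r$ is near $m$, where $r\log(em/r)$ may exceed the bound. This is handled by using $m\le n$ and monotonicity in $r$ up to $n$ with the factor $en/r$. I expect no real obstacle; the main care is the combinatorial identification of the tilted measure in the first step.
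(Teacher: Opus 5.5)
Your proposal is correct and follows the paper's proof essentially step for step. The three parts match: the subset expansion $\sum_S u^{|S|}\psi_S^2=\prod_i(1+u\,\psi(y_i)^2)$ integrated over $T_{m,r}$ for the identity; $w_{m,k,r}\le 1$ with $\binom mk\le n^k/k!$ for the envelopes; and the binomial-sum bound $\sum_{s\le r}\binom ms\le (r+1)(en/r)^r$ with $r\le 3np$ for the entropy estimate. The ``delicate point'' you flag for $r$ near $m$ is not actually an issue, since $s\mapsto s\log(em/s)$ is already nondecreasing on $[0,m]$.
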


\begin{proof}
By the definition of $w_{m,k,r}$ in \eqref{e:weight-w},
\[
F_{m,r}(u)
=
\sum_{S \subset[m]} \E_{Q_m}[\one_{\{\sum_i \bY_i\le r\}}  \psi_S(\bY)^2 u^{|S|} ]
=
\E_{Q_m}\left[
\one_{\{\sum_i\bY_i\le r\}}
\prod_{i=1}^{m}(1+u\psi_i(\bY)^2)
\right].
\]
Then, direct expansion over the number of ones in $\bY$ gives
\begin{align*}
F_{m,r}(u)
=
\sum_{s=0}^r\binom {m}s
\bigl(p+(1-p)u\bigr)^s\bigl(1-p+pu\bigr)^{m-s}.
\end{align*}
Moreover, since $w_{m,k,r}\le1$ and $\binom {m}k\le n^k/k!$,
\[
F_{m,r}^{(\ge K)}(u)
\le
\sum_{k=K}^{\infty}\frac{(nu)^k}{k!}
\le
\frac{(nu)^K}{K!}e^{nu},
\]
while
$F_{m,r}(u)\le\sum_{k=0}^m\binom {m}k u^k=(1+u)^{m}\le e^{nu}$.

Finally, setting $u=1$ in the explicit formula gives
$F_{m,r}(1)=\sum_{s\le r}\binom {m}s$. Since $np\gg\log n$, for
$r\le2np+100\log n+B_*$ we have $r\le3np$ for all sufficiently large
$n$. Hence the standard binomial-sum bound yields
\[
F_{m,r}(1)
\le
(r+1)\left(\frac{en}{r}\right)^r
\le
\exp\bigl(Cnp\log(1/p)\bigr),
\]
which proves the lemma.
\end{proof}

We now complete the proof of Proposition~\ref{p:bounding-B}. 
\begin{proof}[Proof of Proposition~\ref{p:bounding-B}]
Recall that $B_{m,K,r}=\E_{\rho}\left[F_{m,r}^{(\ge K)}(|\eta_2(\rho)|)\right]$. 
We split according to
\[
\calA
:=
\left\{
|\rho|+d^{-1/2}\le \frac{c_0}{L}
\right\},
\]
where $c_0>0$ is a sufficiently small numerical constant.

On $\calA$, we have $|\eta_2(\rho)|$ is roughly of order $pL|\rho|$. More precisely, applying \cite[Theorem 2, the case $k=2$ and Lemma 16]{du2026resolution} gives
\begin{equation}\label{eq:eta_2-fine}
|\eta_2(z,z')|
\le CpL|\rho|+CpL^2(\rho^2+d^{-1}).
\end{equation}
Thus, denoting $H=npL$, for a sufficiently small fixed $\varepsilon>0$, we have a crude bound
\begin{align}
n|\eta_2(\rho)|&\le CH|\rho|+CHL\rho^2+\frac{CHL}{d}\nonumber\\
&\le \left(\varepsilon d\rho^2+\frac{4C^2H^2}{\varepsilon d}\right)+\frac{CHL}{d}\cdot d\rho^2+\frac{CHL}{d}\nonumber\\
&\le \varepsilon d\rho^2+o(1)+o(1)\cdot d\rho^2+o(1)\le 2\varepsilon d\rho^2+1\label{eq:neta_2-crude}
\end{align}
where the last two inequalities hold for large enough $n$ since $d\ge H^3$. 
Therefore, by Lemma~\ref{lem:degree-weight-generating}-\eqref{eq:degree-weight-envelope}, the two bounds \eqref{eq:eta_2-fine}, \eqref{eq:neta_2-crude}, and H\"{o}lder's inequality,
\begin{align}
\E\left[
F_{m,r}^{(\ge K)}(|\eta_2(\rho)|)\one_{\calA}
\right]
&\stackrel{\eqref{eq:degree-weight-envelope}}{\le}\frac{1}{K!}\mathbb{E}\left[(n|\eta_2(\rho)|)^Ke^{n|\eta_2(\rho)|}\one_{\calA}\right]\stackrel{\eqref{eq:neta_2-crude}}{\le} \frac{C}{K!}\mathbb{E}\left[(n|\eta_2(\rho)|)^Ke^{2\varepsilon d\rho^2}\one_{\calA}\right]\nonumber\\
&\stackrel{\eqref{eq:eta_2-fine}}{\le} CH^K\left(\mathbb{E}\left[|\rho|^Ke^{2\varepsilon d\rho^2}\right]+\E\left[L^K(\rho^2+d^{-1})^Ke^{2\varepsilon d\rho^2}\right]\right),\nonumber
\end{align}
where $C_K$ is a $K$-dependent constant. Now, we use the standard spherical overlap estimates
$\E[|\sqrt d\,\rho|^j]\le(C\sqrt j)^j$ and
$\sup_d\E [e^{c_sd\rho^2}]<\infty$ for some small numerical constant $c_s>0$. By choosing $\varepsilon<c_s/4$, and applying Cauchy-Schwartz, we obtain that
\begin{equation}\label{eq:good-event-bound}
\E\left[
F_{m,r}^{(\ge K)}(|\eta_2(\rho)|)\one_{\calA}
\right]\le C\left(\frac{H}{\sqrt{d}}\right)^K+C\left(\frac{HL}{d}\right)^K\,,
\end{equation}
with a (potentially adjusted) $K$-dependent constant $C$. 

We next consider the contribution from $\calA^c$. Since $|\eta_2|\le1$ and
$F_{m,r}^{(\ge K)}\le F_{m,r}$, Lemma~\ref{lem:degree-weight-generating}
gives
\[
\E\left[
F_{m,r}^{(\ge K)}(|\eta_2(\rho)|)\one_{\calA^c}
\right]
\le
F_{m,r}(1)\,\mathbb{P}(\calA^c).
\]
As $d^{-1/2}=o(L^{-1})$, the event $\calA^c$ implies
$|\rho|\ge c/L$. Hence the standard spherical tail bound and
$F_{m,r}(1)\le e^{CH}$ yield
\[
\E\left[
F_{m,r}^{(\ge K)}(|\eta_2(z,z')|)\one_{\calA^c}
\right]
\le
\exp\left(CH-\frac{d}{CL^2}\right).
\]
Altogether, we obtain
\begin{align*}
\E_{\rho}\left[F_{m,r}^{(\ge K)}(|\eta_2(\rho)|)\right]&=\E_{\rho}\left[F_{m,r}^{(\ge K)}(|\eta_2(\rho)|)\one_{\calA}\right]+\E_{\rho}\left[F_{m,r}^{(\ge K)}(|\eta_2(\rho)|)\one_{\calA^c}\right]\\
&\le C\left(\left(\frac{H}{\sqrt{d}}\right)^K+C\left(\frac{HL}{d}\right)^K+\exp\left(CH-\frac{d}{CL^2}\right)\right)
\end{align*}
as desired. 

Finally, since $d\ge H^3$, and $H\ge n^\delta,L=O(\log n)$, we have for sufficiently large $n$, 
\begin{equation*}
\max\left\{\frac{H}{\sqrt d},\frac{HL}{d}\right\}\le n^{-\delta/2}\,,\qquad \exp\left(CH-\frac{d}{CL^2}\right)=n^{-\omega(1)}\,.
\end{equation*}
Thus, for any fixed $K>2/\delta$, we have $B_{m,K,r}=o(1/n)$. 
\end{proof}

\section{The high-degree harmonic remainder}
\label{sec:high-harmonic-remainder}

In this section, we prove Lemma~\ref{lem:state-harmonic-expansion} 
on the harmonic expansion of $\eta_{D,S}$ and remainder estimates. 
The same argument applies to 
Theorem~\ref{thm:harmonic-expansion} for the harmonic expansion for $\eta_k$.
  Throughout this section, let $\rho=\Iprod{x}{y}$ where $x,y$ are iid uniform on $\mathbb S^{d-1}$. It is well known that 
$\rho$ is subgaussian with variance
proxy $1/d$; see e.g.\ \cite[Lemma~7]{du2026resolution}.



\subsection{Properties of the averaging operator}
\label{sec:Tproperty}
Recall the operator $\sfT_{\bfa}^{u,S}$ defined by \eqref{eq:generated-state-definition},
which is useful for averaging out the marked latent vector $x_u$ in the cavity calculation in \prettyref{sec:cavity-harmonic}.
This operator satisfies the following properties
that are important for our application.

\begin{lemma}
\label{lmm:Tproperty}
Let $D \in \calH_{\bsell}$ for $\bsell = (\ell_v: v\in V) \in \naturals^V$.
Fix a marked vertex $u\in V$ and denote $h=\ell_u$. 
Given $\bfa \in \naturals^S$, write $D'=D'(x_{V'}) \equiv \sfT_{\bfa}^{u,S} D$, 
where $V'\equiv (V\setminus u) \cup S$.
Then $D'$ satisfies the following:
\begin{enumerate}
    \item (Preservation of tensor product)
$D' \in \calH_{\bsell'}$,
where 
$\bsell' = (\ell'_i: i \in V')$,
and 
\[
\ell'_i := \begin{cases}
\ell_i    & i \in V\setminus \{u\} \\
a_i    & i \in S
\end{cases}
\]

    \item (Contraction)
    Let $\|\bfa\|_1\geq h $
    (otherwise $\sfA_{h;\bfa}$ and $D'$ are zero by
    \prettyref{lem:selection-rule}).  Define $b_m$ by
\[
 \Phi_{\bfa}(t):=\prod_{v\in S}P_{a_v}(t)=\sum_{m\geq0} b_mP_{m}(t).
\]Then,   recalling the notation \eqref{e:def.q.s},   we have
\begin{equation}
\E((D')^2)
=\frac{d^{2q}b_h}
 {N_{h,d}\prod_{v\in S}N_{a_v,d}}\E(D^2)
\leq \frac{C}{d^s}  \E(D^2)\,,
\label{eq:T-contraction}
\end{equation}
where $C$ denotes a constant depending only on $\|\bfa\|_1$ and $h$.
\end{enumerate}
Furthermore, both parts apply to $D \equiv 1$ with $V=\{u\}$ and $h=0$, in which case $V'=S$ and
$D'\in \calH_{\bfa}$.
\end{lemma}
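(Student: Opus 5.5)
The plan is to expand $D$ in the marked variable and compute $\sfT_{\bfa}^{u,S}D$ explicitly through the reproducing identity \eqref{eq:reproducing-identity}. As in \eqref{eq:D-expandxu}, write
\[
D(x_u,x_{V\setminus u})=\sum_{a=1}^{N_h}Y_{h,a}(x_u)F_a(x_{V\setminus u}),\qquad F_a\in\bigotimes_{v\in V\setminus u}\calH_{\ell_v},
\]
which is possible by the tensor-product structure \eqref{eq:pure-block}. Insert this into \eqref{eq:generated-state-definition} and \eqref{eq:harmonic-contraction}, and use Fubini to integrate out $x_u$ first. The identity $N_h\E_{x_u}[Y_{h,a}(x_u)P_h(\ip{x_u}{z})]=Y_{h,a}(z)$ then gives
\[
D'(x_{V'})=d^q\sum_{a}F_a(x_{V\setminus u})\,G_a(x_S),\qquad G_a(x_S):=\int Y_{h,a}(z)\prod_{j\in S}P_{a_j}(\ip{x_j}{z})\,d\sigma(z).
\]
For fixed $z$, each map $x_j\mapsto P_{a_j}(\ip{x_j}{z})$ lies in $\calH_{a_j}$. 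Averaging over $z$ keeps us inside the closed subspace $\bigotimes_{j\in S}\calH_{a_j}$, so $G_a\in\calH_{\bfa}$. This proves part 1, and the case $D\equiv1$, $h=0$ is identical with $N_0=1$ and $Y_{0,1}=1$.

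For the identity in \eqref{eq:T-contraction}, I will compute $\E[(D')^2]=d^{2q}\sum_{a,b}\E[F_aF_b]\,\E_{x_S}[G_aG_b]$. To evaluate $\E_{x_S}[G_aG_b]$, introduce a replica $z'$ and integrate each $x_j$ first. Applying the reproducing identity to $f=P_{a_j}(\ip{\cdot}{z'})\in\calH_{a_j}$ gives $\E_{x_j}[P_{a_j}(\ip{x_j}{z})P_{a_j}(\ip{x_j}{z'})]=P_{a_j}(\ip{z}{z'})/N_{a_j}$. Consequently
\[
\E_{x_S}[G_aG_b]=\frac{1}{\prod_jN_{a_j}}\E_{z,z'}\big[Y_{h,a}(z)Y_{h,b}(z')\Phi_{\bfa}(\ip{z}{z'})\big].
\]
By Funk--Hecke, exactly as in the proof of \eqref{e:general-identity}, this equals $\mathbf 1\{a=b\}\,\E_\rho[P_h(\rho)\Phi_{\bfa}(\rho)]/\prod_jN_{a_j}$. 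Orthogonality of the $P_m$ together with $\E[P_h(\rho)^2]=1/N_h$ (the addition formula at $x=y$) gives $\E_\rho[P_h\Phi_{\bfa}]=b_h/N_h$. Finally, $\sum_a\E[F_a^2]=\E(D^2)$, which yields the exact identity.

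The main obstacle is the inequality, i.e.\ showing that $b_h=N_h\E_\rho[P_h(\rho)\Phi_{\bfa}(\rho)]=O(d^{-s})$ with $s=(A-h)/2$. Since $N_h\prod_jN_{a_j}\asymp d^{h+A}=d^{2q}$ by \eqref{e:harmonic.dim.estimate}, the prefactor is then $O(d^{-s})$.

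To bound $b_h$, I would first use the explicit Gegenbauer form: $P_a(t)=\sum_{j\le a/2}c_{a,j}t^{a-2j}$ with $|c_{a,j}|\le C_a d^{-j}$, which can be read off from \eqref{e:addition.formula} with $\lambda=(d-2)/2$. Multiplying out, $\Phi_{\bfa}(t)=\sum_{j}e_jt^{A-2j}$ with $|e_j|\le Cd^{-j}$. Since $P_h$ is orthogonal to polynomials of degree $<h$, only the monomials with $m=A-2j\ge h$ contribute. For those, Cauchy--Schwarz and the subgaussian moment bound on $\rho$ give
\[
|\E[P_h(\rho)\rho^m]|\le N_h^{-1/2}\|\rho^m\|_2\le Cd^{-h/2-m/2}.
\]
Multiplying by $N_h\asymp d^h$ and $|e_j|$, each term is $O(d^{-j}d^{(h-m)/2})=O(d^{-(A-h)/2})=O(d^{-s})$, uniformly over the finitely many $j$. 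This gives $b_h\le C d^{-s}$ with $C$ depending only on $A$ and $h$, completing the proof.
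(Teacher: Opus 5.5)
Your proof is correct. Part~1 and the exact identity in \eqref{eq:T-contraction} follow the paper's argument essentially step for step:
\begin{itemize}
\item your $G_a$ is the paper's $G_{\bfa}$ up to the factor $d^q/N_h$;
\item your replica computation of $\E_{x_S}[G_aG_b]$ is the paper's covariance computation $K_{\bfa}=\lambda I$ via \eqref{eq:zonal-kernel-contraction};
\item your use of Funk--Hecke plays the role of \eqref{eq:zonal-harmonic-selection}.
\end{itemize}

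The genuine difference is the bound on $b_h$. The paper invokes the second bound of Lemma~\ref{lem:direct-b-bounds}. That bound comes from the integral representation \eqref{eq:direct-zonal-integral} together with H\"older's inequality, and yields $b_h/N_h\le (C(h+A)/d)^{(h+A)/2}$ with an absolute constant $C$.

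You instead do the following:
\begin{itemize}
\item expand $\Phi_{\bfa}$ in monomials using the explicit Gegenbauer coefficients $|c_{a,j}|\le C_ad^{-j}$, which requires the standard explicit series for $C_{a,\lambda}$ rather than just \eqref{e:addition.formula};
\item discard monomials of degree below $h$ by orthogonality;
\item bound each remaining term by Cauchy--Schwarz with $\E[P_h(\rho)^2]=1/N_h$.
\end{itemize}
Your power counting $d^{h}\cdot d^{-j}\cdot d^{-(h+A-2j)/2}=d^{-s}$ is right. Since only fixed $h$ and $A$ are involved, this proves the lemma as stated by a more elementary argument.

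What the paper's route buys is explicit, degree-uniform control. Your constants $C_a$ are not tracked in the degrees, so as written you get only the fixed-degree statement. The paper reuses the same bound from Lemma~\ref{lem:direct-b-bounds} in Case~II of the proof of Lemma~\ref{lem:state-harmonic-expansion}. There the total degree $A$ ranges up to order $L$, and the explicit dependence on $h+A$ with an absolute constant is what makes the geometric series in $A$ summable. The paper's lemma also gives $b_h\ge0$ directly, though for the present inequality nonnegativity is automatic from the identity.
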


\begin{proof}
  Recall that we suppress the ambient dimension $d$ from the subscripts when possible.   Let $V_0:=V\setminus\{u\}$ and abbreviate
\[
 \mathsf B_{V_0}(x_{V_0})
 :=\bigotimes_{v\in V_0}Y_{\ell_v}(x_v),
 \qquad
 \mathsf B_{\bfa,S}(x_S)
 :=\bigotimes_{i\in S}Y_{a_i}(x_i).
\]
By the tensor inner-product representation \eqref{eq:pure-block}, we have
\[
 D(x_V)
 =\Big\langle \mathsf C_D,
   Y_h(x_u)\otimes\mathsf B_{V_0}(x_{V_0})\Big\rangle\,.
\]
Applying the addition formula \eqref{e:addition.formula} to \prettyref{eq:harmonic-contraction} gives
\begin{align*}
&\mathsf A_{h;\mathbf a}(x_u,x_S)
=
\frac{d^q}{N_h\prod_{i\in S}N_{a_i}}
\int
\langle Y_h(x_u),Y_h(z)\rangle
\prod_{i\in S}
\langle Y_{a_i}(x_i),Y_{a_i}(z)\rangle
\,d\sigma(z) \\
&\qquad= \bigg\langle 
\underbrace{\frac{d^q}{N_h\prod_{i\in S}N_{a_i}}
   \int Y_h(z)\otimes
   \bigotimes_{i\in S}Y_{a_i}(z)\,d\sigma(z)}_{\mathsf C_{h;\bfa}},
   Y_h(x_u)\otimes\mathsf B_{\bfa,S}(x_S)
   \bigg\rangle\,.
\end{align*}
For multi-indices
$\boldsymbol\alpha\in\prod_{v\in V_0}[N_{\ell_v}]$ and
$\boldsymbol\beta\in\prod_{i\in S}[N_{a_i}]$, define
\[
 (\mathsf C')_{\boldsymbol\alpha,\boldsymbol\beta}
 :=N_h\sum_{r=1}^{N_h}
   (\mathsf C_D)_{r,\boldsymbol\alpha}
   (\mathsf C_{h;\bfa})_{r,\boldsymbol\beta}.
\]
Substituting the two representations into
\eqref{eq:generated-state-definition} and using \eqref{e:y.ytranspose} gives
\[
 D'(x_{V'})
 =\Big\langle \mathsf C',
   \mathsf B_{V_0}(x_{V_0})\otimes
   \mathsf B_{\bfa,S}(x_S)\Big\rangle.
\]
Therefore we conclude
\begin{equation}
 D'\in
 \bigg(\bigotimes_{v\in V_0}\calH_{\ell_v}\bigg)
 \otimes
 \bigg(\bigotimes_{i\in S}\calH_{a_i}\bigg)
 =\calH_{\bsell'}\,,
 \label{eq:TaD-tensor}
\end{equation}
proving the first property.

For the second property, there is nothing to prove when $\sfA_{h;\bfa}\equiv0$, so assume $\sfA_{h;\bfa}\not\equiv0$.
In order to average out the marked variable $x_u$, it is useful to express both $D$ and $\sfA_{h;\bfa}$ 
as inner products involving $Y_h(x_u)$.
Since the degree of $u$ in $D$ is $h$, further expanding \prettyref{eq:pure-block} 
we obtain (this is equivalent to \prettyref{eq:D-expandxu})
	\begin{equation}\label{e:D.rewritten}
	D(x_u,x_{V_0})
	=
	\sum_{j_u} 
	Y_{h,j_u}(x_u)\bigg(
	\sum_{\mathbf{j}_{V_0}} c_{\mathbf{j}}
	\prod_{v\in V_0}
	Y_{\ell_v,j_v}(x_v)\bigg)
	= \Big\langle Y_h(x_u), F(x_{V_0})\Big\rangle\,.
	\end{equation}
Furthermore, recalling the 
definition \prettyref{eq:harmonic-contraction}, we have
\begin{align}\nonumber
 \sfA_{h;\bfa}(x_u,x_S)
 &=d^q
 \int
 P_h(\ip{x_u}{z})
 \prod_{j\in S} P_{a_j}(\ip{x_j}{z})\,d\sigma(z) \\
 &\stackrel{\eqref{e:addition.formula}}{=} \bigg\langle Y_h(x_u), \frac{d^q}{N_h}
 \int Y_h(z)
 \prod_{j\in S}
 P_{a_j}(\ip{x_j}{z})\,d\sigma(z)
 \bigg\rangle
 \equiv 
 \Big\langle Y_h(x_u), G_{\bfa}(x_S)\Big\rangle
 \label{eq:harmonic-contraction1}
\end{align}
Using the orthonormality \eqref{e:y.ytranspose}, we have
\begin{align}
\label{e:D.second.mmt.F}
 \E_{x_u}[D(x_V)^2]
    &=\|F(x_{V_0})\|^2\,,\\
 \E_{x_u}[\sfA_{h;\bfa}(x_u,x_S)^2]
 &=\|G_{\bfa}(x_S)\|^2\,.
\label{e:A.second.mmt.G}
\end{align}
Define the covariance matrix
\[
 K_{\bfa}:=\E_{x_S}\left[
  G_{\bfa}(x_S)\otimes G_{\bfa}(x_S)
 \right].
\]
We next show that $K_{\bfa}$ is proportional to the identity. Note that
    \begin{align}\nonumber
 &\E_x\Big[
  P_a(\langle x,z\rangle)
  P_a(\langle x,z'\rangle)
 \Big]
 \stackrel{\eqref{e:addition.formula}}{=} 
 \frac{\E_x[\langle Y_a(x),Y_a(z)\rangle
 \langle Y_a(x),Y_a(z')\rangle]}{(N_a)^2}\\
 &\qquad\stackrel{\eqref{e:y.ytranspose}}{=}
 \frac{\E_x[\langle Y_a(z),Y_a(z')\rangle]}{(N_a)^2}
 \stackrel{\eqref{e:addition.formula}}{=}\frac{P_a(\langle z,z'\rangle)}{N_a}\,.
 \label{eq:zonal-kernel-contraction}
\end{align}
Similarly, for every $m\ge0$, since spherical harmonics of different degrees are orthogonal, we have
\begin{equation}
\label{eq:zonal-harmonic-selection}
    \int P_{m}(\langle z,z'\rangle)Y_{h,s}(z')\,d\sigma(z')
    \stackrel{\eqref{e:addition.formula}}{=}\frac{1}{N_m}
    \E_{z'}\Big[
    \langle Y_m(z),Y_m(z')\rangle
    Y_{h,s}(z')\Big]
    \stackrel{\eqref{e:y.ytranspose}}{=}
    \mathbf{1}\{m=h\} \frac{Y_{h,s}(z)}{N_h}\,.
\end{equation}
Now recall $\Phi_{\bfa}$ and its coefficients $b_m$ from the statement of the lemma.
For $r,s\in[N_h]$, expanding the definition of $G_{\bfa}$
and using independence of the variables in $x_S$ gives
\begin{align*}
 &\E_{x_S}\!\left[G_{\bfa,r}(x_S)G_{\bfa,s}(x_S)\right]\\
 &\qquad=
 \frac{d^{2q}}{(N_h)^2}
 \iint Y_{h,r}(z)Y_{h,s}(z')
 \prod_{v\in S}\E_{x_v}\Big[
  P_{a_v}(\langle x_v,z\rangle)
  P_{a_v}(\langle x_v,z'\rangle)
 \Big]
 \,d\sigma(z)\,d\sigma(z')\\
 &\qquad\stackrel{\eqref{eq:zonal-kernel-contraction}}{=}
 \frac{d^{2q}}{(N_h)^2\prod_{v\in S}N_{a_v}}
 \iint Y_{h,r}(z)Y_{h,s}(z')
 \Phi_{\bfa}(\langle z,z'\rangle)
 \,d\sigma(z)\,d\sigma(z') \\
 &\qquad=
 \frac{d^{2q}}{(N_h)^2\prod_{v\in S}N_{a_v}}
 \int Y_{h,r}(z)
 \sum_{m=0}^{\|\bfa\|_1} b_m
 \int Y_{h,s}(z') P_m (\langle z,z'\rangle)
 \,d\sigma(z')\,d\sigma(z)\\
 &\qquad\stackrel{\eqref{eq:zonal-harmonic-selection}}{=}
 \frac{d^{2q} b_h}{(N_h)^3\prod_{v\in S}N_{a_v}}
 \int Y_{h,r}(z) Y_{h,s}(z)\,d\sigma(z)
 \stackrel{\eqref{e:y.ytranspose}}{=}
 \frac{d^{2q} b_h}{(N_h)^3\prod_{v\in S}N_{a_v}}
 \mathbf{1}\{r=s\}
 \equiv \lambda \mathbf{1}\{r=s\}\,.
\end{align*}
This proves $K_{\bfa}=\lambda I_{\calH_h}$, as claimed.
Taking the trace gives
\[
 \lambda
 =\frac{\E_{x_S}[\|G_{\bfa}(x_S)\|^2]}{N_h}
 \stackrel{\eqref{e:A.second.mmt.G}}{=}
 \frac{\E_{x_u,x_S}[\sfA_{h;\bfa}(x_u,x_S)^2]}{N_h}
 = \frac{\E[(\mathsf{A}_{h;\bfa})^2]}{N_h}\,.
\]
Applying \prettyref{e:D.rewritten} and \prettyref{eq:harmonic-contraction1} to 
\prettyref{eq:generated-state-definition}, we obtain
	\[
	D'(x_{V_0},x_S)
	= N_h \Big\langle
	F(x_{V_0}),G_{\bfa}(x_S)
	\Big\rangle\,.
	\]
Since $S$ and $V$ are disjoint, the tuples $x_{V_0}$ and $x_S$ are independent. It follows using the above covariance calculation that
	\begin{align}\nonumber
	&\E[(D')^2]
	=(N_h)^2\E_{x_{V_0}}\left[
	\Big\langle
	F(x_{V_0}),K_{\bfa} F(x_{V_0})
	\Big\rangle\right]
	= \lambda (N_h)^2
	\E_{x_{V_0}} [\|F(x_{V_0})\|^2]\\
	&\qquad
	\stackrel{\eqref{e:D.second.mmt.F}}{=} \lambda (N_h)^2
	\E_{x_u,x_{V_0}} [D(x_u,x_{V_0})^2]
	=\frac{d^{2q}b_h}
 {N_h\prod_{v\in S}N_{a_v}}\E[D^2]\,.
 \label{eq:relative-transition-exact}
	\end{align}
This proves the identity in \eqref{eq:T-contraction}.

To prove the inequality in \eqref{eq:T-contraction}, it remains to bound $b_h$. Recall from \eqref{e:def.q.s} that $A\equiv\|\bfa\|_1$, $q=(h+A)/2$ and $s=q-h$.
The second upper bound in~\eqref{eq:direct-b-bounds} of
Lemma~\ref{lem:direct-b-bounds}, together with the dimension
estimate~\eqref{e:harmonic.dim.estimate}, gives
\[
 0\le b_h
 \le N_h\left(\frac{C(h+A)}{d}\right)^q
 \le C_{h,A}d^{h-q}
 =\frac{C_{h,A}}{d^s},
\]
where $C_{h,A}$ depends only on $h,A$.
Moreover, the same dimension estimate gives
\[
 N_h\prod_{i\in S}N_{a_i}
 \asymp_{h,A}d^{h+A}=d^{2q}.
\]
Combining these bounds with~\eqref{eq:relative-transition-exact}
proves the inequality in~\eqref{eq:T-contraction}.
\end{proof}

\subsection{Bounds for products of zonal polynomials}
\label{sec:uniform-zonal-products}
With
$N_\ell\equiv N_{\ell,d}$ and $P_\ell\equiv P_{\ell,d}$,
orthogonality and the
normalization of $P_m$ give
\begin{align}\nonumber
&\E_\rho[P_\ell(\rho)P_m(\rho)]
\stackrel{\eqref{e:addition.formula}}{=}
\indc{m=\ell}\frac{\E_{x,y}[\langle Y_\ell(x),Y_\ell(y)\rangle^2]}{(N_\ell)^2}\\
&\qquad\stackrel{\eqref{e:y.ytranspose}}{=}
\indc{m=\ell}
\frac{\E_x[
\langle Y_\ell(x),Y_\ell(x)\rangle
]}{N_\ell^2}
 =\frac{\indc{m=\ell}}{N_\ell}\,.
\label{eq:direct-zonal-orthogonality}
\end{align}
For $\nu\ge3$, the normalized zonal polynomials $P_{\ell,\nu}$ have the integral
representation (see for instance \cite[\href{https://dlmf.nist.gov/18.10.E4}{Equation~18.10.4}]{NIST:DLMF})
\begin{equation}
 P_{\ell,\nu}(t)
 \stackrel{\eqref{e:addition.formula}}{=}
 \frac{C_{\ell,\lambda}(t)}{C_{\ell,\lambda}(1)}
 =\E_U\bigg[
 \Big(t+\mathbf{i}\sqrt{1-t^2}\,U\Big)^\ell\bigg]\,,
 \qquad -1\le t\le1\,,
 \label{eq:direct-zonal-integral}
\end{equation}
where $\mathbf{i}=\sqrt{-1}$ is the imaginary unit and $U=\ip{z}{z'}$ for $z,z'$ sampled independently and uniformly at random from $\mathbb S^{\nu-2}$. Taking the modulus, we obtain
\begin{equation}
 |P_{\ell,\nu}(t)|
 \leq \E_U\Big[
 (t^2+U^2)^{\ell/2}\Big]\,.
 \label{eq:zonal-integral-ub}
\end{equation}
Abbreviate $A\equiv\|\bfa\|_1$ and recall the coefficients $b_m$ from
Lemma~\ref{lmm:Tproperty}:
\[
 \Phi_{\bfa}(t)\equiv\prod_{i\in S}P_{a_i}(t)
 =\sum_{m\ge 0} b_mP_m(t).
\]
(Note that $b_m$ implicitly depends on $\bfa$.)
By~\eqref{eq:direct-zonal-orthogonality},
\begin{equation}
 \frac{b_h}{N_h}
 =\E_\rho\Big[P_h(\rho)\Phi_{\bfa}(\rho)\Big]
 =\E_\rho\bigg[P_h(\rho)\prod_{i\in S}P_{a_i}(\rho)\bigg]\,.
 \label{eq:direct-b-definition}
\end{equation}
\begin{lemma}
\label{lem:direct-b-bounds}
There is an absolute constant $C$ such that, for every $d\ge3$,
$h\ge0$, $k\ge1$, and $\bfa\in\naturals^k$, 
\begin{equation}
 0\le\frac{b_h}{N_h}
 \le\min\bigg\{
   \frac1{N_{\max\{h,a_1,\ldots,a_k\}}},
   \bigg(\frac{C(h+A)}{d}\bigg)^{(h+A)/2}
 \bigg\}\,,
 \label{eq:direct-b-bounds}
\end{equation}
where we recall that $A=\|\mathbf a\|_1$.
Moreover, $b_h=0$ unless $A\ge h$ and $h+A$ is even.
\end{lemma}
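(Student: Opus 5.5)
Three facts need to be established: the vanishing/parity statement, nonnegativity together with the bound $1/N_{\max}$, and the polynomial bound $(C(h+A)/d)^{(h+A)/2}$. All are derived from the representation \eqref{eq:direct-b-definition},
\[
\frac{b_h}{N_h}=\E_\rho\Big[P_h(\rho)\prod_{i}P_{a_i}(\rho)\Big].
\]

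\emph{Vanishing.} By Lemma~\ref{lem:selection-rule} applied to the kernel $\sfA_{h;\bfa}$, or directly, the product $\Phi_{\bfa}$ is a polynomial of degree $A$ with parity $(-1)^A$. Its Gegenbauer expansion therefore only involves $P_m$ with $m\le A$ and $m\equiv A \pmod 2$. Hence $b_h=0$ unless $A\ge h$ and $h+A$ is even.

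\emph{Nonnegativity and the first bound.} I will rewrite the expectation as a spherical integral. Using $P_\ell(\langle x,z\rangle)=\langle Y_\ell(x),Y_\ell(z)\rangle/N_\ell$, I get
\[
\E_\rho\Big[\prod_{j=0}^k P_{c_j}(\rho)\Big]=\E_{x}\int\prod_j P_{c_j}(\langle x,z\rangle)\,d\sigma(z),
\]
where $c_0=h$ and $c_j=a_j$. This alone does not obviously give a nonnegative quantity. A better route is to introduce $k+1$ independent points $x_0,\dots,x_k$ and use \eqref{eq:zonal-kernel-contraction} repeatedly. Concretely, I will show that
\[
\int\Big(\E_{x_0,\dots,x_k}\Big[\prod_j P_{c_j}(\langle x_j,z\rangle)\,\cdots\Big]\Big)^2\,d\sigma(z)
\]
reproduces $\E_\rho[\prod_j P_{c_j}]$ up to the factor $\prod_j N_{c_j}^{-1}$. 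Equivalently, the proof of Lemma~\ref{lmm:Tproperty} already exhibits $b_h$ as a positive multiple of $\E[\sfA_{h;\bfa}^2]$. Indeed, $\lambda=d^{2q}b_h/(N_h^3\prod N_{a_v})$ equals $\E[\sfA_{h;\bfa}^2]/N_h\ge0$. I plan to reuse exactly that identity, which gives $b_h\ge0$ immediately.

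For the bound $1/N_{c_*}$, where $c_*$ is the largest index, I use $|P_\ell|\le1$ on $[-1,1]$ for all the factors except $P_{c_*}$ and one other factor $P_{c'}$. This gives
\[
\frac{b_h}{N_h}\le \E|P_{c_*}(\rho)P_{c'}(\rho)|\le \sqrt{\E P_{c_*}^2\,\E P_{c'}^2}=\frac{1}{\sqrt{N_{c_*}N_{c'}}}.
\]
That is only $1/\sqrt{N_{c_*}N_{c'}}$, so it needs improvement. To get $1/N_{c_*}$ exactly, I use positivity instead. Write $\prod_{j\ne *}P_{c_j}=\sum_m e_m P_m$, where the linearization coefficients $e_m$ are nonnegative; this follows from the same squared-norm argument, or from the classical Gegenbauer linearization positivity. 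The evaluation at $1$ gives $\sum_m e_m=1$. Then
\[
\E[P_{c_*}\Phi]=e_{c_*}/N_{c_*}\le 1/N_{c_*}.
\]

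\emph{Polynomial bound.} I apply \eqref{eq:zonal-integral-ub}, with $\ell$'s summing to $h+A=2q$:
\[
\frac{b_h}{N_h}\le\E_\rho\prod_j\E_{U_j}(\rho^2+U_j^2)^{c_j/2}.
\]
I then use Hölder's inequality together with $(\rho^2+U^2)^{c/2}\le 2^{c}(|\rho|^c+|U|^c)$. Both $\rho$ and $U$ are overlaps on spheres of dimension about $d$, with moments $\E|\sqrt d\rho|^{m}\le (Cm)^{m/2}$ by subgaussianity. The total exponent is $2q$, so the bound becomes $(C\cdot 2q/d)^{q}$, which is the claimed $(C(h+A)/d)^{(h+A)/2}$.

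The main obstacle is the positivity step, since the linearization coefficients must be nonnegative. My first attempt will be to express each $e_m$ as a squared $L^2$ norm, mirroring the covariance computation for $K_{\bfa}$ in Lemma~\ref{lmm:Tproperty}.
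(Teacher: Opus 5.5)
Your proposal is correct. The vanishing statement and the bound $(C(h+A)/d)^{(h+A)/2}$ follow the paper's argument: parity and degree of $\Phi_{\bfa}$ for the first, and for the second the representation \eqref{eq:zonal-integral-ub}, H\"older with exponents $(h+A)/c_j$ (after dropping the trivial factor $P_0\equiv 1$ when $h=0$), and subgaussian moments of $\rho$ and $U$.

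Where you differ is in the nonnegativity and the $1/N_{\max}$ bound. The paper gets both from a single identity. With $a_0=h$ and $x,y$ independent, the addition formula gives
\[
\frac{b_h}{N_h}=\frac{\big\|\E\big[\bigotimes_{i=0}^{k}Y_{a_i}(y)\big]\big\|^2}{\prod_{i=0}^{k}N_{a_i}}\,.
\]
This is manifestly nonnegative. Bessel's inequality in the tensor slot of the largest degree then bounds the numerator by $\prod_{i\ne j}N_{a_i}$, which gives $1/N_{\max}$ at once.

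You instead take nonnegativity from the covariance identity $b_h=N_h^2\prod_v N_{a_v}\,\E[\sfA_{h;\bfa}^2]/d^{2q}$ in the proof of Lemma~\ref{lmm:Tproperty}. This is legitimate, since that identity is derived independently of the present lemma; it is really the same squared-norm identity in another guise. You then get $1/N_{c_*}$ by a classical linearization argument. The coefficients $e_m$ of the product of the remaining factors are nonnegative, by the same positivity applied to a different degree tuple. They sum to $1$ because $P_m(1)=1$. Orthogonality then gives $b_h/N_h=e_{c_*}/N_{c_*}\le 1/N_{c_*}$.

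Your route has the appeal of exhibiting $(e_m)$ as a probability vector. However, it uses positivity twice and needs the separate normalization step. The paper's tensor-norm identity delivers nonnegativity and the sharp $1/N_{\max}$ bound in one stroke, without any auxiliary linearization fact.
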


\begin{proof}
Without loss of generality, let $S = [k]$ and $\bfa=(a_1,\ldots,a_k)\in\naturals^k$.
For notational convenience, let $a_0\equiv h$ and $a_j=\max \{a_i : i\in[0,k]\}$. Define
\[
 F_j(y)\equiv
 \bigotimes_{i\in[0,k]\setminus j}
 Y_{a_i}(y)
\]
where 
$Y_a = (Y_{a,1},\ldots,Y_{a,N_a})$ is an orthonormal basis of $\calH_a$ (cf.~\prettyref{e:y.ytranspose}). 
Let $x,y$ be iid uniform on $S^{d-1}$ and $\rho=\Iprod{x}{y}$. By~\eqref{eq:direct-b-definition} and the addition
formula~\eqref{e:addition.formula},
\[
\frac{b_h}{N_h}
= \frac{\E[\Iprod{Y_{a_j}(x) \otimes F_j(x)}{Y_{a_j}(y)\otimes F_j(y)}]}{\prod_{i=0}^kN_{a_i}} \\
=
\frac{\|\E[Y_{a_j}(y)\otimes F_j(y)]\|^2}{\prod_{i=0}^kN_{a_i}}
 \ge0\,,\]
where $\|\cdot\|$ denotes the Euclidean (Frobenius) norm of tensors.   This proves the lower bound in \eqref{eq:direct-b-bounds}.   Furthermore,
\begin{align*}
 \Big\|\E[Y_{a_j}(y)\otimes F_j(y)]\Big\|^2
 &=\sum_{s=1}^{N_{a_j}}
   \left\|\E\left[Y_{a_j,s}(y)F_j(y)\right]\right\|^2\\
 &\stepa{\le}\E\left[\|F_j(y)\|^2\right] = 
\E\left[\prod_{i \in[0,k]\setminus j} \|Y_{a_i}(y)\|^2\right]
 \stepb{=}\prod_{i \in [0,k] \setminus j}N_{a_i}.
\end{align*}
where 
(a) applies the orthonormality of $\{Y_{a_j,s}:s=1,\ldots,N_{a_j}\}$ to each entry of the tensor $F_j$;
(b) is because $\|Y_\ell(y)\|^2=N_\ell$ for all $y$, again by the addition formula \eqref{e:addition.formula}.
  Therefore,
\[
 0\le\frac{b_h}{N_h}
 \le\frac{\prod_{i\in[0,k]\setminus j}
 N_{a_i}}{\prod_{i=0}^kN_{a_i}}
 =\frac1{N_{a_j}}
 =\frac1{N_{\max\{h,a_1,\ldots,a_k\}}}\,,
\]
  where we recall from \eqref{e:harmonic.dim.estimate} that $N_{\ell}$ is nondecreasing in $\ell$. This proves the first upper bound in \prettyref{eq:direct-b-bounds}.

Next, we prove the second upper bound in \prettyref{eq:direct-b-bounds}. 
To this end, we may assume without loss of generality that $a_0=h\geq 1$; otherwise we have $h=0$, $P_0\equiv 1$ and thus can apply the same argument to $a_1,\ldots,a_k\in\naturals$. 
Let $U=\ip{z}{z'}$ for $z,z'$ sampled uniformly at random from $\mathbb{S}^{d-2}$, and let $U_0,\ldots,U_k$ be i.i.d.\ copies of $U$, independent of $\rho$. Applying \prettyref{eq:zonal-integral-ub} and H\"older's
inequality with exponents $(h+A)/a_i$
(again recalling $A=\|\bfa\|_1$) gives
\begin{align*}
 \frac{b_h}{N_h}
 & \le\E\bigg[\prod_{i=0}^k(\rho^2+U_i^2)^{a_i/2}\bigg]
 \le\prod_{i=0}^k\bigg\{
       \Big( \E[(\rho^2+U_i^2)^{(h+A)/2}]
       \Big)^{a_i/(h+A)}
       \bigg\}\\
 &=\E\Big[(\rho^2+U_0^2)^{(h+A)/2}\Big]
 \le \bigg(\frac{C(h+A)}{d}\bigg)^{(h+A)/2}\,,
\end{align*}
where the last step is because both $\rho$ and $U_0$ are subgaussian with proxy
variance at most $C/d$. This proves the second upper bound in \eqref{eq:direct-b-bounds}.

Finally, symmetry gives $b_h=0$ when $h+A$ is odd.  
The polynomial $\Phi_{\bfa}$ has degree $A$, so it is orthogonal to $P_h$ if $A<h$, in which case we again obtain $b_h=0$. This concludes the proof.
\end{proof}

\subsection{Bounding cap coefficients}
\label{sec:uniform-cap-coefficients}


Recall the centered cap kernel $\kappa$ from \eqref{e:def.kappa}, and its 
expansion \prettyref{eq:kappa-harmonic-expansion}.   It follows from \eqref{eq:direct-zonal-orthogonality} that  
\begin{equation}
\frac{\alpha_\ell}{N_\ell} = \Expect[P_\ell(\rho) \kappa(\rho)]\,.
\label{eq:alpha-ell}
\end{equation}
Define the normalized coefficients $w_\ell:=(\alpha_\ell)^2/N_\ell$, and note that they satisfy the identity
\begin{equation}
 \sum_{\ell\ge1}w_\ell=\E[\kappa(\rho)^2]=1\,.
 \label{eq:direct-cap-energies}
\end{equation}

\begin{lemma}
\label{lem:direct-cap-bound}
There is an absolute constant $C$ such that, for $d\ge3$,
$0<p\le e^{-1}$, and every integer $1\le\ell\le d$,
the normalized coefficients $w_\ell$ defined above satisfy the bound
\begin{equation}
 w_\ell\le p\frac{[C(L+\ell)]^\ell}{\ell!}\,,
 \label{eq:direct-cap-bound}
\end{equation}
  with $L$ as in \eqref{e:def.L.eps}. Moreover, for any \emph{fixed} $M$, we can bound
\begin{equation}
 (\alpha_\ell)^2=N_\ell w_\ell
 \le C_M p(dL)^\ell
 \label{eq:fixed-cap-coefficients}
\end{equation}
for all $1\le\ell\le2M$.
\end{lemma}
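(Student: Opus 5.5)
The idea is to compute $\alpha_\ell/N_\ell=\E[P_\ell(\rho)\kappa(\rho)]$ via the Rodrigues formula for Gegenbauer polynomials, which converts the integral against the cap indicator into a boundary term at $\tau$. Since $P_\ell$ is orthogonal to constants for $\ell\ge1$, we have
\[
\frac{\alpha_\ell}{N_\ell}=\frac{1}{\sqrt{p(1-p)}}\,\E[P_\ell(\rho)\one\{\rho\ge\tau\}].
\]
The density of $\rho$ is $c_d(1-t^2)^{(d-3)/2}$. The Rodrigues formula gives $P_\ell(t)(1-t^2)^{(d-3)/2}=c_{\ell,d}\,\frac{d^\ell}{dt^\ell}(1-t^2)^{\ell+(d-3)/2}$, with $c_{\ell,d}=(-1)^\ell 2^{-\ell}\Gamma(\lambda+1/2)/\Gamma(\ell+\lambda+1/2)$ and $\lambda=(d-2)/2$. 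Integrating from $\tau$ to $1$ therefore yields
\[
\E[P_\ell(\rho)\one\{\rho\ge\tau\}]=\frac{(1-\tau^2)}{d-1}\,f_{d}(\tau)\,P^{(d+2)}_{\ell-1}(\tau),
\]
up to explicit constants. Here $f_d$ is the density of $\rho$ and $P^{(d+2)}_{\ell-1}$ is the normalized zonal polynomial in dimension $d+2$; this is the classical identity for the harmonic coefficients of a cap. Squaring and dividing by $p N_\ell^{-1}$, I would obtain
\[
w_\ell=\frac{N_\ell}{p(1-p)}\Big(\frac{(1-\tau^2)f_d(\tau)}{d-1}\Big)^2 P^{(d+2)}_{\ell-1}(\tau)^2 .
\]

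The key estimates then come in three steps. First, the standard cap tail estimate $p\asymp f_d(\tau)(1-\tau^2)/(d\tau)$ for $\tau\sqrt d\gtrsim1$ (with $p\asymp1$ otherwise) gives $(1-\tau^2)f_d(\tau)/d\lesssim p\,\tau\lesssim p\sqrt{L/d}$, using $\tau\asymp\sqrt{L/d}$ in the relevant range and handling large $\tau$ via $1-\tau^2\le1$. Second, from \eqref{eq:zonal-integral-ub}, $|P^{(d+2)}_{\ell-1}(\tau)|\le \E[(\tau^2+U^2)^{(\ell-1)/2}]\le (C(L+\ell)/d)^{(\ell-1)/2}$, by subgaussianity of $U$ and $\tau^2\lesssim L/d$. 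Third, $N_\ell\le d^\ell/\ell!\cdot C^\ell$ for $\ell\le d$, which follows from \eqref{e:harmonic.dim.estimate}. Multiplying these gives
\[
w_\ell\lesssim \frac{d^\ell C^\ell}{\ell!}\cdot p\,\frac{L}{d}\cdot\Big(\frac{C(L+\ell)}{d}\Big)^{\ell-1}\le p\,\frac{[C(L+\ell)]^\ell}{\ell!},
\]
which is \eqref{eq:direct-cap-bound}. For \eqref{eq:fixed-cap-coefficients}, note that for $\ell\le2M$ we have $N_\ell\le C_Md^\ell$ and $(L+\ell)^\ell\le C_ML^\ell$ since $L\ge1$. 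Hence $(\alpha_\ell)^2=N_\ell w_\ell\le C_Mp(dL)^\ell$.

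The main obstacle is making the first step uniform over all regimes of $\tau$. When $\tau$ is not of order $\sqrt{L/d}$ (for instance $\tau\sqrt d$ bounded, or $\tau$ bounded away from $0$ when $p$ is extremely small relative to $d$), the relation between $p$, $\tau$ and $f_d(\tau)$ changes. I would handle this with the explicit Mills-ratio-type bound $f_d(\tau)(1-\tau^2)\le C p\,(d\tau+\sqrt d)$, obtained by comparing the integral $\int_\tau^1(1-t^2)^{(d-3)/2}dt$ with its value near $\tau$. Together with $\tau^2\le CL/d$, which follows from $p\le e^{-c d\tau^2}$, this should suffice. If $\tau^2$ is not $O(L/d)$ for $d$ small, I would instead bound $|P_{\ell-1}|\le1$ and absorb the loss into the factor $[C(L+\ell)]^\ell$, using $\ell\le d$.
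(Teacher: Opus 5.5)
Your proposal is correct and follows the paper's proof. It uses the same exact boundary-term identity $\E[P_{\ell,d}(\rho)\one\{\rho\ge\tau\}]=\frac{(1-\tau^2)f_d(\tau)}{d-1}P_{\ell-1,d+2}(\tau)$, which holds with no extra constants; the paper derives it from the Gegenbauer differential equation and derivative identity rather than Rodrigues, which is equivalent. It then uses the same zonal integral bound at $\tau$ and the same dimension bound $N_\ell\le(2d)^\ell/\ell!$.

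The one difference is the step you flag as the main obstacle. The paper notes that $\frac{(1-\tau^2)f_d(\tau)}{d-1}=p\,u_d$ with $u_d=\E[\rho\mid\rho\ge\tau]$. It then bounds $u_d\le\sqrt{2L/d}$ in one line from $pe^{su_d}\le\E[e^{s\rho}\one\{\rho\ge\tau\}]\le e^{s^2/(2d)}$ with $s=\sqrt{2dL}$. This is uniform over $d\ge3$ and $\tau$, so your Mills-ratio comparison and small-$d$ fallback are unnecessary. Your route, $u_d\le C(\tau+d^{-1/2})$ combined with $\tau^2\le 2L/d$, also works.
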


\begin{proof}
Recall that $p=\Prob(\rho\geq \tau)$, and let $u_d=\E[\rho\mid\rho\ge\tau]$.  
The assumption $p \le e^{-1}$ implies 
$L\ge1$. As noted above, $\rho$ is subgaussian with variance proxy $1/d$. The subgaussian bound   and the conditional Jensen's inequality give, for $s\ge 0$,
\[
 p e^{s u_d}
 \le\E[e^{s\rho}\one\{\rho\ge\tau\}]
 \le e^{s^2/(2d)}\,.
\]
Choosing $s=(2dL)^{1/2}$ gives
\begin{equation}
 0<\tau\le u_d\le \bigg( \frac{2L}{d}\bigg)^{1/2}\,.
 \label{eq:direct-cap-mean-bound}
\end{equation}
For $\ell=1$, since $P_1(t)=t$ and $N_1=d$, the coefficient definition gives
\[
 w_1
 =d\left(\E[\kappa(\rho)\rho]\right)^2
 =\frac{dp (u_d)^2}{1-p}
 \le\frac{2pL}{1-p}
 \le4pL.
\]
This proves~\eqref{eq:direct-cap-bound} for $\ell=1$, so we assume hereafter that $\ell\ge2$. 

We next relate $w_\ell$ to the value of a zonal
polynomial.  The probability density function of $\rho$ on $(-1,1)$ is
    \[
    f_d(t)
    := c_d(1-t^2)^{(d-3)/2}\,,\quad
    c_d=\frac{\Gamma(d/2)}
        {\pi^{1/2}\Gamma((d-1)/2)}\,.
    \]
Direct integration gives
\begin{equation}\label{e:formula.for.u.d}
 p u_d= \frac{c_d (1-\tau^2)^{(d-1)/2}}{d-1}\,.
\end{equation}
Recall the Gegenbauer differential equation and the derivative identity
\cite[Appendix~B.2, in particular the discussion around Equation~(B.2.6)]{dai2013approximation}:
\begin{align*}
 \frac{d}{dt}\Big[(1-t^2)^{(d-1)/2}
    (P_{\ell,d})'(t)\Big]
 &=-\ell(\ell+d-2)(1-t^2)^{(d-3)/2}P_{\ell,d}(t)
 \,,\\
 (P_{\ell,d})'(t)  
 \stackrel{\eqref{e:addition.formula}}{=} 
 \frac{(C_{\ell,\lambda})'(t)}
 {C_{\ell,\lambda}(1)}
 &  =
 2\lambda 
 \frac{C_{\ell-1,\lambda+1}(1)}
    {C_{\ell,\lambda}(1)}
 \frac{C_{\ell-1,\lambda+1}(t)}
 {C_{\ell-1,\lambda+1}(1)} 
 \stackrel{\eqref{e:addition.formula}}{=} 
   \frac{\ell(\ell+d-2)}{d-1}P_{\ell-1,d+2}(t)
 \,.
\end{align*}
  Combining gives
    \[\frac{1}{d-1}
    \frac{d}{dt}\Big[
    (1-t^2)^{(d-1)/2}
    P_{\ell-1,d+2}(t)\Big]
    = -(1-t^2)^{(d-3)/2}
    P_{\ell,d}(t)\,,
    \]
from which it follows that
 
\begin{align*}
 &\E[P_{\ell,d}(\rho)\mid\rho\ge\tau]
  =\frac{1}{p} \int_\tau^1 
 c_d (1-t^2)^{(d-3)/2}
    P_{\ell,d}(t)dt \\
&\qquad   = \frac{c_d(1-\tau^2)^{(d-1)/2}}{p(d-1)}
P_{\ell-1,d+2}(\tau)
 \stackrel{\eqref{e:formula.for.u.d}}{=}  
 u_d P_{\ell-1,d+2}(\tau)\,.
\end{align*}
Since $P_{\ell,d}$ is centered for $\ell\ge1$, 
recalling \eqref{e:def.kappa} and~\eqref{eq:alpha-ell} gives
\begin{equation}
 \frac{\alpha_\ell}{N_\ell}
 = \frac{\E(P_{\ell,d}(\rho); \rho\ge\tau)}
    {[p(1-p)]^{1/2}}
 =\bigg(\frac{p}{1-p} \bigg)^{1/2}
 \,u_d P_{\ell-1,d+2}(\tau)\,.
 \label{eq:cap-coefficient-identity}
\end{equation}
Recalling the definition $w_\ell=(\alpha_\ell)^2/N_\ell$ gives
\begin{equation}
 w_\ell
 =\frac{p}{1-p} (u_d)^2N_{\ell,d}
   P_{\ell-1,d+2}(\tau)^2\,.
 \label{eq:direct-cap-boundary}
\end{equation}
To bound the right-hand side of \eqref{eq:direct-cap-boundary}, 
we apply \prettyref{eq:zonal-integral-ub} with $\nu=d+2$ and $U$ subgaussian with variance proxy $ O(1/d)$, so
$(\E|U|^{2m})^{1/m}\le Cm/d$ for all $m\ge1$.
Then
\begin{equation}
 |P_{m,d+2}(\tau)|^2
 \overset{\prettyref{eq:zonal-integral-ub}}{\le}\E_U(\tau^2+U^2)^m
 \le\left(\tau^2+\frac{Cm}{d}\right)^m
 \overset{\prettyref{eq:direct-cap-mean-bound}}{\le}
\left(\frac{C(L+m)}d\right)^m,
\label{eq:zonal-tau}
\end{equation}
where the middle inequality follows from the triangle inequality (in $L^m$ norm).   We will apply the above with $m=\ell-1$, which is $\ge1$ since we assumed $\ell\ge2$.   We also have the dimension bound
\[
 N_{\ell,d}
 \stackrel{\eqref{e:harmonic.dim.estimate}}{\le}\binom{d+\ell-1}{\ell}
 \le\frac{(d+\ell)^\ell}{\ell!}
 \le\frac{(2d)^\ell}{\ell!}
\]
for all $1\le\ell\le d$.
Combining \eqref{eq:direct-cap-mean-bound}, 
\eqref{eq:direct-cap-boundary}, \prettyref{eq:zonal-tau}, and the dimension bound gives
\[
 w_\ell
 \le 2p\frac{2L}{d}\frac{(2d)^\ell}{\ell!}
       \left(\frac{C(L+\ell)}d\right)^{\ell-1}
 \le p\frac{[C(L+\ell)]^\ell}{\ell!}\,,
\]
which proves~\eqref{eq:direct-cap-bound}.
  Finally, for fixed $M\ge1$, and $1\le\ell\le 2M$, the bound \eqref{eq:direct-cap-bound} simplifies to
    \[
    (\alpha_\ell)^2=N_\ell w_\ell
    \le N_\ell p (CL)^\ell 
    \stackrel{\eqref{e:harmonic.dim.estimate}}{\le} C_M p (Ld)^\ell\,,
    \]
which proves \eqref{eq:fixed-cap-coefficients}.
\end{proof}


\subsection{Proofs of the harmonic expansions}
\label{sec:proof-harmonic-expansions}

\begin{proof}[\hypertarget{proof:l.roadmap-state-harmonic-expansion}{Proof of Lemma~\ref{lem:state-harmonic-expansion}}]
Recall that $D=D(x_V)$. We have a marked vertex $u\in V$, and we denote $h=\ell_u$ and $V_0=V\setminus\{u\}$.
Fix $S\subseteq V^c$, and write $k=|S|$.
All expectations are taken over $x_i$ sampled uniformly at random from $\mathbb{S}^{d-1}$.
As in \eqref{e:def.q.s}, abbreviate
$A\equiv\|\bfa\|_1$ and 
$q \equiv q(h;\bfa) = (h+A)/2$.
Recall from~\prettyref{eq:eta_D_S_expression} the orthogonal expansion
\[
 \eta_{D,S}(x_{V_0},x_S)
 =\sum_{\bfa\in\naturals^S}\gamma_{h;\bfa}
   (\mathsf T_{\bfa}^{u,S}D)(x_{V_0},x_S),
\]
where each summand is the orthogonal projection of $\eta_{D,S}(x_{V_0},x_S)$
--- as a function of $x_S$ with $x_{V_0}$ fixed --- 
onto the space $\calH_{\bfa}$ defined by \eqref{eq:tensor-harmonics}.
Because the spaces $\calH_{\bfa}$ are orthogonal for different multidegrees $\bfa$, 
the orthogonal projection of $\eta_{D,S}$ onto the direct sum of all $\calH_{\bfa}$ with total degree $A\le 2M-h$ is
\[
\sum_{\bfa\in\naturals^S: q(h;\bfa)\le M}
       \gamma_{h;\bfa}  (\mathsf T_{\bfa}^{u,S}D)(x_{V_0},x_S)
= \sum_{\bfa\in  \calA_{k,M}(h)}
       \gamma_{h;\bfa}  (\mathsf T_{\bfa}^{u,S}D)(x_{V_0},x_S)\,,
\]
where the last step applies the selection rule in Lemma~\ref{lem:selection-rule}, which shows that the projection to $\calH_{\bfa}$ is zero unless $\bfa$ lies in the set $\calA_{k,M}(h)$   defined by \eqref{e:def.calA.h}.   This is the main term in \prettyref{eq:eta-DS-expansion}, and thus the residual term 
 $\mathcal R_{D,S,M}$ satisfies, by orthogonality,
\begin{equation}
 \E[(\mathcal R_{D,S,M})^2]
 =\sum_{\bfa\in\naturals^S:q(h;\bfa)>M}
   \gamma_{h;\bfa}^{\,2}\E[(\mathsf T_{\bfa}^{u,S}D)^2]\,,
 \label{eq:direct-parseval-tail}
\end{equation}
where we also average over $x_{V_0}$ sampled according to the uniform distribution.


Assume now that $d\ge3$ and $0<p\le e^{-1}$.
For $\bfa\in\calA_{k,M}(h)$, the condition $q\le M$ gives
$a_i\le A\le2M$.  The fixed-degree estimate
\eqref{eq:fixed-cap-coefficients}
  from Lemma~\ref{lem:direct-cap-bound}   then gives
\[
 (\gamma_{h;\bfa})^2
 \stackrel{\eqref{eq:transition-coefficient-definition}}{=}
 \frac1{d^{2q}}\prod_{i\in S}
    (\alpha_{a_i})^2
 \le C_{k,M}\frac{p^kL^Ad^A}{d^{h+A}}
 =C_{k,M}\frac{p^kL^A}{d^h}\,.
\]
This proves the first bound in~\eqref{eq:transition-scales}.
The second bound in~\eqref{eq:transition-scales} is the contraction estimate~\eqref{eq:T-contraction}
of Lemma~\ref{lmm:Tproperty}; its constant depends only on $h,A$,
and hence only on $h,M$.

It remains to show the residual error \prettyref{eq:direct-parseval-tail} satisfies the bound \eqref{eq:harmonic-remainder-quantitative}, assuming $d\ge C_0L^2$. The definition
\eqref{eq:transition-coefficient-definition} can be rewritten as
    \[(\gamma_{h;\bfa})^2
 =\frac{1}{d^{2q}}
 \prod_{i\in S}(N_{a_i}w_{a_i})\,,\]
and we recall that \eqref{eq:T-contraction} gives
\[
 \E[(\mathsf T_{\bfa}^{u,S}D)^2]
 =\frac{d^{2q} b_h}{ N_h \prod_{i\in S}N_{a_i}}\,
   \,\E(D^2)\,.\]
Multiplying the above two identities gives
\[(\gamma_{h;\bfa})^2\E[(\mathsf T_{\bfa}^{u,S}D)^2]
 =\E(D^2)\bigg(\prod_{i\in S}w_{a_i}\bigg)
 \frac{b_h}{N_h}\,.\]
Thus it is equivalent to show
\begin{equation}
 \sum_{\bfa\in\naturals^S:q(h;\bfa)>M}
   \bigg(\prod_{i\in S}w_{a_i}\bigg)
   \frac{b_h}{N_h}
 \le C_{h,k,M}\frac{p^kL^{2M+2-h}}{d^{M+1}}\,.
 \label{eq:direct-weighted-tail-goal}
\end{equation}
  from which the desired bound \eqref{eq:harmonic-remainder-quantitative} would immediately follow.   To this end, we split the left-hand side of~\eqref{eq:direct-weighted-tail-goal} into two parts, according to conditions on the quantity
\[
 Q\equiv\Big\lceil4(k+M+h+1)L\Big\rceil\,.
\]
 Note that for $C_0$ large enough (depending on $h,k,M$), the assumption $d \ge C_0 L^2$ implies $Q \le d$.

\paragraph{Case I: $\max_i a_i\ge Q$.}
On the left-hand side of~\eqref{eq:direct-weighted-tail-goal}, since each summand
is nonnegative by
Lemma~\ref{lem:direct-b-bounds}, in this part we may drop the restriction $q>M$.
Recalling \prettyref{e:harmonic.dim.estimate}, the dimension $N_\ell = N_{\ell,d}$ is nondecreasing in $\ell$, and satisfies
\begin{equation}\label{e:harmonic.dim.lbd} N_\ell
  \stackrel{\eqref{e:harmonic.dim.estimate}}{\ge} \binom{d+\ell-2}{\ell}
= \prod_{i=1}^{\ell}
\frac{d-2+i}{i}   \ge \bigg(\frac{d-2}{\ell}\bigg)^\ell
\ge \bigg(\frac{d}{2\ell}\bigg)^\ell\,,
\end{equation}
where the last inequality holds for all $d\ge4$.
For $\max_i a_i\ge Q$,
the bound \eqref{eq:direct-b-bounds} from Lemma~\ref{lem:direct-b-bounds}
gives
$b_h/N_h\le1/N_Q$.
Thus the entire sum over these degrees satisfies
\begin{align} \nonumber
&\sum_{\substack{\bfa\in\naturals^S,\\\max_i a_i\ge Q}}
   \bigg(\prod_{i\in S}w_{a_i}\bigg)
   \frac{b_h}{N_h}
 \stackrel{\eqref{eq:direct-b-bounds}}{\le}
 \frac1{N_Q}\sum_{\bfa\in\naturals^S}
 \bigg( \prod_{i\in S} w_{a_i}\bigg)
 =\frac1{N_Q}
 \bigg(\sum_{\ell\ge1} w_k\bigg)^k
 \stackrel{\eqref{eq:direct-cap-energies}}{=}\frac1{N_Q}
 \\
 &\qquad\stackrel{\eqref{e:harmonic.dim.lbd}}{\le}\left(\frac{2Q}{d}\right)^Q
 \stepa{\le}
   \frac{1}{2^Q d^{Q/2}}  
 \stepb{\le}
 \frac{p^k}
 {d^{M+1+h/2}}
 \stepc{\le} 
 \frac{p^k}{L^h d^{M+1}}\,,
 \label{eq:direct-far-tail}
\end{align}
where the last three inequalities may be justified as follows: first we choose a large $C_0$ depending on $(k,M,h)$ so that
$d\ge C_0L^2$ implies $d\ge16Q^2$, which yields (a).
Then (b) follows since 
$Q\log2\ge kL$ and $Q/2\ge M+1+h/2$,
and (c) follows since $d\ge L^2$   (these bounds are very crude). Note \eqref{eq:direct-far-tail} shows that the desired bound \eqref{eq:direct-weighted-tail-goal} is satisfied by the case I contribution.


\paragraph{Case II: $\max_i a_i<Q$.}
In this case, 
$A=\|\bfa\|_1\le k(Q-1)\le C_{h,k,M}L$. 
  By the harmonic selection rule from Lemma~\ref{lem:selection-rule}, all non-zero contributions to the left-hand side of \eqref{eq:direct-weighted-tail-goal} must have
  
$A\ge h$ and $q=(h+A)/2\in\mathbb{Z}$. Let $S_A$ denote the contribution to \eqref{eq:direct-weighted-tail-goal} from any such $A$, that is,
\[
 S_A\equiv
 \sum_{\substack{\bfa\in\naturals^S,\\\|\bfa\|_1=A,\ \max_i a_i<Q}}
   \bigg(\prod_{i\in S}w_{a_i}\bigg)
   \frac{b_h}{N_h}\,.
\]
We can then bound
\begin{align}
 S_A
 &\stepa{\le} p^k\bigg(\frac{C(h+A)}{d}\bigg)^{(h+A)/2}
       \sum_{\substack{\bfa\in\naturals^S,\\\|\bfa\|_1=A,\ \max_i a_i<Q}}
       \prod_{i\in S}\frac{[C(L+a_i)]^{a_i}}{a_i!}\notag\\
 &\stepb{\le} p^k 
       \left(\frac{C(h+A)}{d}\right)^{(h+A)/2}
       [C(L+A)]^A
       \frac{k^A}{A!}
       \notag\\
 & \le 
 \frac{C_h p^k}{d^{(h+A)/2}}
 [C_k(L+A)]^A
 \frac{(h+A)^{(h+A)/2}}{A!}  
 \stepc{\le} 
 \frac{C_h p^k}{d^{(h+A)/2}}
       [C_k(L+A)]^A\notag\\
 &\stepd{\le}  
 \frac{C_{h,k,M} p^k}{d^{(h+A)/2}}
 (C_{h,k,M}L)^A  
 \le
 \frac{C_{h,k,M}p^k}{L^h}
       \left(\frac{C_{h,k,M}^2L^2}{d}\right)^{(h+A)/2},
 \label{eq:direct-total-degree-bound}
\end{align}
where
(a) applies Lemmas~\ref{lem:direct-b-bounds}
and~\ref{lem:direct-cap-bound} since $a_i<Q\le d$;
(b) uses $a_i\le A$ and the multinomial identity
    \[
    (u_1 + \ldots + u_k)^A
    = \sum_{\bfa\in\naturals^k
    :\|\bfa\|_1=A} \binom{A}{a_1,\ldots,a_k}
    \prod_{i=1}^k (u_i)^{a_i}
    \]
applied with all $u_i=1$;
(c) uses $A\ge h$ and $A!\ge(A/e)^A$, which imply
\[
 \frac{(h+A)^{(h+A)/2}}{A!}
 \le \bigg(
 \frac{h+A}{A}\bigg)^{(h+A)/2}
 \frac{e^A}{A^{(A-h)/2}}
 \le \frac{2^{h/2}(e\sqrt2)^A}{A^{(A-h)/2}}
 \le 2^{h/2}(e\sqrt2)^A\,;
\]
and (d) follows from the above observation that $A\le C_{h,k,M}L$. For $d\ge C_0 L^2$ with $C_0$ large enough, the ratio in
\eqref{eq:direct-total-degree-bound} is at most $1/2$. Summing the geometric series in $A$ with $(h+A)/2\geq M+1$ yields
\[
 \sum_{\substack{\bfa\in\naturals^S:\\\max_i a_i<Q,\ q(h;\bfa)>M}}
   \bigg(\prod_{i\in S}w_{a_i}\bigg)
   \frac{b_h}{N_h}
 \le \frac{C_{h,k,M}p^k}{L^h}
       \bigg(\frac{L^2}{d}\bigg)^{M+1}\,.
\]
  This shows that the desired bound \eqref{eq:direct-weighted-tail-goal} is also satisfied by the case II contribution, and thus concludes the proof of \eqref{eq:harmonic-remainder-quantitative}.
\end{proof}

\begin{proof}[\hypertarget{proof:thm.harmonic-expansion}{Proof of Theorem~\ref{thm:harmonic-expansion}}]
Take $V=\{u\}$, $h=0$, $D\equiv1$, and $S=[k]$, with $u\notin S$.
Then $\eta_{D,S}=\eta_k$ by~\eqref{eq:eta} and~\eqref{eq:eta-DS}.

Note that although 
Lemma~\ref{lem:state-harmonic-expansion} assumes $D \in \calH_{\bsell}$ for $\bsell\in\naturals^V$, the same proof applies also to the constant case: 
Lemma~\ref{lmm:Tproperty} continues to hold for $D\equiv 1$, and the orthogonal expansion,
coefficient and remainder estimates apply with $h=0$ and $\E(D^2)=1$.
Specializing~\eqref{e:def.q.s}
and \eqref{eq:transition-coefficient-definition} to $h=0$, we obtain
	\begin{align*}
	q & =q(0;\bfa)
	 =\frac{\|\bfa\|_1}{2}
	=s(0;\bfa)\,, \\
    \gamma_{\bfa}
	&:= \gamma_{0;\bfa}
	= \frac{1}{d^q} \prod_{j=1}^k \alpha_{a_j}\,.
	\end{align*}
Since $P_0\equiv1$ and $N_0=1$, \eqref{eq:harmonic-contraction} and \eqref{eq:generated-state-definition} become
	\begin{equation}
	\label{eq:initial-Da}
	D_{\bfa}(x_1,\ldots,x_k)
	:=(\mathsf T_{\bfa}^{u,[k]}1)(x_1,\ldots,x_k)
    =d^q\int\prod_{j=1}^k
       P_{a_j}(\langle x_j,z\rangle)\,d\sigma(z) = 
			\sfA_{0;\bfa}(x_1,\ldots,x_k).
	\end{equation}
Comparing \eqref{e:def.calA} with \eqref{e:def.calA.h} gives
	\[
 \calA_{k,M}(0)
  =\bigcup_{q=\lceil k/2\rceil}^{M}\calA_{k,q}\,.
 \]
Thus \prettyref{eq:eta-DS-expansion} specialized to $h=0$ yields
\eqref{eq:harmonic-expansion}. Moreover, since
$D_{\bfa} = \sfA_{0;\bfa} \in\calH_{\bfa}$, 
\eqref{eq:harmonic-expansion} is an orthogonal expansion, where the main term is the projection of $\eta_k$ onto the direct sum of $\calH_{\bfa}$ for total degree $\|\bfa\|_1$ up to $2M$.
For $d\ge3$ and $0<p\le e^{-1}$, 
\prettyref{eq:transition-scales} applies to $h=0$ and yields
\eqref{eq:harmonic-component-scales}.  

For $q=1$, by definition \eqref{e:def.calA}, we have 
$\mathcal{A}_{2,1}
= \{(1,1)\}$ and $\mathcal{A}_{k,1}
= \emptyset$ for $k>2$. Since $P_1(\rho)=\rho$ and $N_1=d$, the definition
\eqref{eq:initial-Da} further specializes to
	\[
 D_{(1,1)}(x_1,x_2)
 =d\int\langle x_1,z\rangle\langle x_2,z\rangle\,d\sigma(z)
 =\langle x_1,x_2\rangle\,,
\]
which verifies \eqref{eq:critical-component}.
Finally, if $d\ge C_0(K,M)L^2$, the remainder estimate \prettyref{eq:harmonic-remainder-quantitative} holds with $h=0$ and $\E(D^2)=1$, resulting in
\eqref{eq:initial-harmonic-tail}.
\end{proof}

\section*{Acknowledgment}
H.~Du and N.~Sun are supported in part by NSF-Simons collaboration grant DMS-2031883 and NSF grant DMS-2347177.
C.~Mao is supported in part by NSF CAREER Award 2338062. J.~Xu is supported in part by NSF CAREER Award CCF-2144593.

\hypertarget{sec:AI_statement}{\section*{AI methodologies statement}}
Building on the strategy developed in~\cite{du2026resolution}, which established
the detection threshold for $p\gtrsim n^{-2/3}/\log n$, OpenAI's GPT-5.6 was
used substantively in developing the proof in the present work. At an initial
stage, the model proposed an extension of the argument in~\cite{du2026resolution} to  $p\gtrsim n^{-3/4}$. A key new ingredient in this extension  was the
typical-degree truncation used in Section~\ref{sec:high-level-fourier} to control
the high-frequency Fourier remainder. 

Following this initial improvement, a proof of the detection threshold covering the full regime $p \ge n^{-1+\delta}$ emerged through an extensive interactive process between the authors and GPT-5.6 spanning many Codex sessions. The authors provided mathematical context, proposed intermediate claims, and
supplied objections and corrections, while the model proposed and revised proof
strategies and technical arguments. The model therefore materially influenced
the proof methodology underlying Sections~\ref{sec:pf-main-thm}--\ref{sec:high-harmonic-remainder}. 

During the
subsequent verification and revision, the authors 
substantially simplified the argument.  One important simplification was the authors' observation that the
cavity recurrence can be closed uniformly over an infinite class of
bounded-degree spherical harmonics, 
without first
reducing the recurrence to a finite family;  
see Section~\ref{sec:closing-recursion}. 
To make this observation precise,
the authors introduced the tensor-product spaces \eqref{eq:tensor-harmonics} of spherical harmonic polynomials, and then proved
in Lemma~\ref{lmm:Tproperty} that the cavity averaging operator \prettyref{eq:generated-state-definition} preserves this tensor-product structure  and contracts the $L_2$ norm, allowing both the recursion and the truncation to be organized in terms of 
the multidegrees, and finally closing the recurrence by taking a normalized supremum over the bounded-degree
class \eqref{eq:bounded-harmonic-class}. 
In particular, this
eliminated the need for an earlier combinatorial construction of a finite class that tracks what functions may appear in repeated application of the cavity argument and the associated counting argument.

The authors also introduced a more direct argument, intrinsic to the spherical
harmonic expansion, for controlling the high-degree harmonic remainder; see
Section~\ref{sec:high-harmonic-remainder}. This
replaced an earlier, more involved method that relies on Taylor expansion along the interpolating model introduced 
in~\cite[Appendix~A]{du2026resolution}. Finally, the authors reorganized the
logical structure of the proof and rewrote the manuscript.

\bibliographystyle{alpha}
\bibliography{ref}
\end{document}